\newcommand{\bqa}{\begin{equation}}
\newcommand{\eqa}{\end{equation}}
\newcommand{\bea}{\begin{eqnarray}}
\newcommand{\eea}{\end{eqnarray}}
\newcommand{\bna}{\begin{eqnarray*}}
\newcommand{\ena}{\end{eqnarray*}}
\newcommand{\bma}{\begin{pmatrix}}
\newcommand{\ema}{\end{pmatrix}}
\def\bz{{\mathbb Z}}
\def\br{{\mathbb R}}
\def\sl2z{SL(2,\bz)}
\def\psl2z{PSL(2,\bz)}
\def\gl2r{GL(2,\br)}
\def\re{{\Re}}
\def\im{{\Im}}
\def\res{{\rm Res}}
\def\C{\mathbb{C}}
\def\A{\mathbb{A}}
\def\R{\mathbb{R}}
\def\Q{\mathbb{Q}}
\def\Z{\mathbb{Z}}
\def\vol{\mathrm {Vol}}
\def\f{\mathrm {fin}}
\newtheorem{lemma}{Lemma}[section]
\newtheorem{thm}[lemma]{Theorem}
\newtheorem{prop}[lemma]{Proposition}
\theoremstyle{definition}
\newtheorem{remark}{Remark}
\renewcommand{\theequation}{\arabic{section}.\arabic{equation}}
\newcommand{\bit}{\begin{itemize}}
\newcommand{\eit}{\end{itemize}}
\title{Simple Fourier Trace Formulas of Cubic Level and Applications}
\author{Qinghua Pi}
\address{School of Mathematics and Statistics,
Shandong Univeristy, Weihai,
Weihai 264209,
China}
\email{qhpi@sdu.edu.cn}
\author{Yingnan Wang}
\address{Shenzhen Key Laboratory of Advanced Machine Learning and Applications,
College of Mathematics and Statistics,
Shenzhen University, Shenzhen,
Guangdong 518060, China}
\email{ynwang@szu.edu.cn}
\author{Lei Zhang}
\address{Department of Mathematics,
National University of Singapore,
Singapore 119076}
\email{matzhlei@nus.edu.sg}
\subjclass[2000]{11F72, 11F67.}
\keywords{Petersson trace formula, Kuznetsov trace formula, Maass newforms, Central $L$-values.}
\date{\today}
\thanks{The first author is supported by
China Postdoctoral Science Foundation (Grant No. 2018M632658) and is supported in part by Innovative Research Team in University (Grant No. IRT16R43).}
\thanks{The second author is supported by National Natural Science Foundation of China (Grant No. 11871344).}
\thanks{The third author is supported in part by AcRF Tier 1 grant R-146-000-237-114 and R-146-000-277-114 of National University of Singapore. }
\begin{document}
\begin{abstract}
With the method of
the relative trace formula and the classification of simple supercuspidal representations, we establish some Fourier trace formulas
 for automorphic forms on $PGL(2)$ of cubic level. As applications, we obtain a non-vanishing result for central $L$-values of holomorphic newforms
 and a weighted Weyl's law for Maass newforms.
\end{abstract}
\maketitle

\section{Introduction}
\setcounter{equation}{0}

The relative trace formula, as an important tool introduced by Jacquet, is to
study the periods of automorphic forms by integrating the automorphic kernel function
on interesting subgroups.
It has been used  in various aspects of the Langlands programs,
such as the base change (\cite{Ye}, \cite{JaYe}), twisted moments of
  $L$-functions (\cite{RR}, \cite{FW},
   \cite{KnLi2010b}, \cite{KnLi2012}, \cite{JaKn}, \cite{Su}, \cite{ST}),
   and Fourier trace formulas (\cite{KnLi2010a}, \cite{KnLi2013}).

The Fourier trace formula, such as
 the Petersson trace formula and the Kuznetsov trace formula,
is an identity between a  product
 of two Fourier coefficients of automorphic functions
  averaged over the spectrum
 and geometric terms involving Kloosterman sums and Bessel functions.
It can be obtained by integrating each variable of the automorphic kernel function
 over unipotent subgroups. If the test function is chosen properly,
 for example,
 a local test function is of the supercusp form
   (see \cite{Ge} or \cite{Ro1}), then
 the Fourier trace formula takes on a simple form in which
 the continuous spectrum
 vanishes.
Based on this fact and the classification of supercuspidal representations,
we establish some Fourier trace formulas for automorphic forms for $PGL_2$
of cubic level.

To state our result, we use the following notation.
For $k>1$ and $N$ a square-free integer,
let $\mathcal A(2k, N^3)$ be the set of cuspidal automorphic representations
for $PGL_2(\A_\Q)$
of weight $2k$ and level $N^3$.
Let $\epsilon(\pi)$ be the root number and  $\lambda_\pi(n)$ be the $n$-th Hecke eigenvalue of $\pi$.

\begin{thm}\label{thm-sPTF}
Let $n_1$ and $n_2$ be two positive integers with $(n_1n_2,N)=1$.
We have
\begin{align*}
\sum_{\pi\in\mathcal A({2k, N^3})}
\frac{\lambda_\pi(n_1)\lambda_\pi(n_2)}
{L_{\mathrm{fin}}(1,\pi,\mathrm{sym^2})}
=&\delta(n_1,n_2)\frac{(2k-1)N^2\varphi(N)}{2\pi^2}\\
&+\frac{(-1)^k(2k-1)}{\pi}
\sum_{c\geq 1}\frac{A_{N}(c)}{c}J_{2k-1}\left(\frac{4\pi\sqrt{n_1n_2}}{N^2c}\right)
S(n_1,n_2;N^2c)
\end{align*}
and
\begin{eqnarray*}
\sum_{\pi\in\mathcal A(2k, N^3)}
\epsilon(\pi)\frac{\lambda_\pi(n_1)\lambda_\pi(n_2)}
{L_{\mathrm{fin}}(1,\pi,\mathrm{sym^2})}
=\frac{(2k-1)N^{3/2}}{\pi}
\sum_{c\geq 1\atop(c,N)=1}\frac{S(\overline N^3n_1,n_2;c)}{c}
J_{2k-1}\left(\frac{4\pi\sqrt{n_1n_2}}{N^{3/2}c}\right),
\end{eqnarray*}
where $\delta(n_1,n_2)$ is the diagonal symbol of Kronecker,
$\varphi(N)$ is
Euler's totient function,
$S(n_1,n_2;c)$ is the classical Kloosterman sum,
and
 $A_{N}(c)=\prod_{p\mid N}A_{p}(c)$ with
\bna
A_{p}(c)=\left\{
\begin{aligned}
&-1, \quad &&\textrm{if\ }p\nmid c,\\
&p-1, \quad&& \textrm{if\ }p\mid c.
\end{aligned}
\right.
\ena
\end{thm}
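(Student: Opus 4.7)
The plan is to derive both identities from the relative trace formula for $PGL_2/\Q$, with local test functions tailored to detect the cubic-level structure. Starting from the automorphic kernel
\[
K_f(x,y)=\sum_{\gamma\in PGL_2(\Q)} f(x^{-1}\gamma y)
\]
for a factorizable test function $f=\prod_v f_v$ on $PGL_2(\A_\Q)$, I would integrate $K_f(u_1,u_2)\psi(n_1 u_1)\overline{\psi(n_2 u_2)}$ over $[U(\Q)\backslash U(\A)]^2$, where $U$ is the upper unipotent and $\psi$ is a fixed additive character. Expanding $K_f$ spectrally produces the average over the cuspidal and continuous spectra weighted by products of Whittaker coefficients, while the geometric expansion splits along the Bruhat cells $PGL_2=U\sqcup UwTU$.

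The local test functions are chosen as follows. At $\infty$, $f_\infty$ is a matrix coefficient of the holomorphic discrete series of weight $2k$, producing the archimedean Bessel transform $J_{2k-1}$ on the geometric side and isolating weight-$2k$ representations on the spectral side. At primes $p\nmid N$, $f_p$ is a linear combination of Hecke operators introducing $\lambda_\pi(n_1)\lambda_\pi(n_2)$ spectrally and the Kloosterman sums $S(n_1,n_2;c)$ geometrically. At each $p\mid N$, $f_p$ is a matrix coefficient of a fixed simple supercuspidal representation $\sigma_p$ of $PGL_2(\Q_p)$ of conductor $p^3$; being a supercusp form in the sense of \cite{Ge, Ro1}, $f_p$ annihilates both the continuous spectrum and every $\pi_p\not\cong\sigma_p$. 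The classification of supercuspidals of $PGL_2(\Q_p)$ of conductor $p^3$ (all arising as simple supercuspidals, unique on $PGL_2$) identifies the spectral support as exactly $\mathcal A(2k,N^3)$. The normalization of $f_p$ combined with the identification of the Petersson norm of a newform with $L_{\mathrm{fin}}(1,\pi,\mathrm{sym}^2)$ then produces the weight $L_{\mathrm{fin}}(1,\pi,\mathrm{sym}^2)^{-1}$.

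On the geometric side, the identity cell contributes the diagonal term $\delta(n_1,n_2)$ times a constant, assembled from the archimedean formal degree $(2k-1)/(4\pi)$, the ramified factor $N^2\varphi(N)$ coming from the (inverse of the) formal degree of the simple supercuspidals, and the measure normalizations. The main technical obstacle is the local orbital integral over the big cell at $p\mid N$: integrating the matrix coefficient of $\sigma_p$ twisted by $\psi\otimes\overline\psi$ along $u_1 w\,\mathrm{diag}(t,1)\,u_2$, one must show that the result shifts the Kloosterman modulus by a factor of $p^2$ and introduces the factor $A_p(c)$. I would carry this out by realizing $\sigma_p$ as compactly induced from an affine-generic character on an Iwahori-type subgroup and computing the integral cell by cell; the values $p-1$ when $p\mid c$ and $-1$ otherwise recover the stated $A_p(c)$.

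For the $\epsilon$-twisted identity, I would insert the global Atkin--Lehner element $w_{N^3}=\mathrm{diag}(N^3,1)w$ into one argument of the kernel, replacing $K_f(u_1,u_2)$ by $K_f(u_1 w_{N^3},u_2)$. On the spectral side this multiplies the contribution of each $\pi\in\mathcal A(2k,N^3)$ by $\epsilon(\pi)$, since for a simple supercuspidal the local Atkin--Lehner eigenvalue coincides with the local root number, and these assemble to the global sign. On the geometric side the identity-cell contribution vanishes (the inserted element forces all double cosets into the big cell), and the ramified orbital integrals now yield the coprimality condition $(c,N)=1$, the twisted Kloosterman sum $S(\overline N^3 n_1,n_2;c)$ arising from conjugation by $w_{N^3}$, and the Bessel argument $4\pi\sqrt{n_1 n_2}/(N^{3/2}c)$.
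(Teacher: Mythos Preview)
Your overall strategy matches the paper's: relative trace formula with a matrix coefficient of the weight-$2k$ discrete series at infinity, spherical Hecke functions at $p\nmid N$, and supercuspidal matrix coefficients at $p\mid N$. However, there is a genuine gap at the ramified places.

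You assert that the simple supercuspidal of $PGL_2(\Q_p)$ of conductor $p^3$ is ``unique on $PGL_2$'', and that a single matrix coefficient therefore has spectral support exactly $\mathcal A(2k,N^3)$. This is false: for each $p\mid N$ there are $2(p-1)$ such representations $\pi_{m_p,\zeta_p}$, indexed by $m_p\in(\Z/p\Z)^\times$ and $\zeta_p\in\{\pm1\}$ (see \cite{KnLi2015}). A test function built from a fixed $\sigma_p$ only sees those $\pi$ with $\pi_p\cong\sigma_p$; it cannot detect all of $\mathcal A(2k,N^3)$. The paper handles this by first fixing a tuple $\mathbf m=(m_p)_{p\mid N}$ (encoded by $M$ with $1\le M\le N$, $(M,N)=1$), taking $f_{p,M}=\sum_{\zeta_p=\pm1}d_{\pi_{m_p,\zeta_p}}\langle\pi_{m_p,\zeta_p}(g)u,u\rangle$, and proving a formula over the subfamily $\mathcal A(2k,N^3,M)$ (Proposition~\ref{prop-sPTF-cuspidal}). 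Theorem~\ref{thm-sPTF} then follows by summing over the $\varphi(N)$ values of $M$.

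This matters for the constants you are trying to explain. The diagonal term for a fixed $M$ is $(2k-1)N^2/(2\pi^2)$; the extra factor $\varphi(N)$ comes only from the sum over $M$, not from any formal degree. Likewise, the local big-cell orbital integral for a fixed $M$ produces the factor $A_{p,M}(c)$, equal to $e(m_p/p)$ when $p\nmid c$ and $1$ when $p\mid c$; the values $-1$ and $p-1$ in $A_p(c)$ appear only after summing $\sum_{m_p=1}^{p-1}e(m_p/p)=-1$ and $\sum_{m_p=1}^{p-1}1=p-1$. Similarly, in the $\epsilon$-twisted formula the fixed-$M$ version carries a congruence $\tilde M c^2\equiv n_1n_2\bmod N$ on $c$, which disappears (leaving only $(c,N)=1$) precisely upon summing over $M$. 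Your proposal, as written, has no mechanism to produce $\varphi(N)$ or $A_N(c)$, and the local orbital-integral computation you sketch would yield $A_{p,M}(c)$ instead.
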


The first result in  Theorem \ref{thm-sPTF} is Petersson's formula
over newforms, and the second one is the formula twisted by the root number.
Petersson's formula over newforms  was first established by Iwaniec-Luo-Sarnak \cite{IwLuoSarnak-lowlying-zeroes} when the level is square-free,
and was generalized by Rouymi \cite{Rouymi2011} to the case of prime's power level.
Recently, Nelson \cite[Theorem 4]{Nel2017} has showed the existence of the local test function which gives a standard projection
to the space of local newvectors for a given level, and then express
 Petersson's formula over newforms in terms of averages over all forms of some levels.
 Different from Nelson's method, we establish Petersson's formulas for given cuspidal parameters at the ramified places firstly,
and then deduce Theorem \ref{thm-sPTF} by summing over cuspidal parameters.
We refer to Proposition \ref{prop-sKTF-cuspidal} for Petersson's formula for
the given
cuspidal parameters.

One of the important applications of Petersson's formula for newforms is to investigate non-vanishing
of automorphic $L$-functions and there are many advances in the past two decades (for example, see \cite{Du1995}, \cite{KoMi1999}, \cite{IwSa2000}, \cite{Khan2010}, \cite{Rou}, \cite{Luo2015} and \cite{BF}).  For holomorphic cusp newforms of square-free level, Iwaniec and Sarnak \cite{IwSa2000} proved that for any $\epsilon>0$, any square-free integer $N$ with $\varphi(N)\sim N$ as $N\rightarrow\infty$,
$$
\sum_{\pi\in\mathcal F(2k,N)\atop{L_{\mathrm{fin}}(1/2,\pi)\neq 0}}1
\geq  \left(\frac{1}{4}-\epsilon\right)
\sum_{\pi\in\mathcal F(2k,N)}1.
$$
Here $\mathcal F^{\mathrm{new}}(2k,N)$ is the Hecke basis of newforms of weight $2k$, level $N$ and of trivial nebentypus.
In fact, the constant $1/4$ is a natural barrier. Iwaniec and Sarnak \cite{IwSa2000}
had proved that any constant bigger than $1/4$ with some lower bound on $L_{\mathrm{fin}}(1/2,f)$ would imply that there are no Landau-Siegel zeros for Dirichlet $L$-functions of real primitive characters.
In this paper, we combine Theorem \ref{thm-sPTF} and the method in \cite{BF1,BF} to give the following result.
\begin{thm}\label{nonvanishing}Let the notation be as in Theorem \ref{thm-sPTF}.
For $\pi\in\mathcal A(2k,N^3)$,  we have that as $N\rightarrow\infty$,
\bna
\sum_{\pi\in\mathcal A(2k,N^3)\atop{L_{\mathrm{fin}}(1/2,\pi)\neq 0}}
\frac{1}{L_{\mathrm{fin}}(1,\pi,\mathrm{sym}^2)}
\geq  \left(\frac{1}{4}-\epsilon\right)
\sum_{\pi\in\mathcal A(2k,N^3)}
\frac{1}{L_{\mathrm{fin}}(1,\pi,\mathrm{sym}^2)}.
\ena
\end{thm}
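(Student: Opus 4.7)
The plan is to adapt the mollified-moment method of Iwaniec--Sarnak \cite{IwSa2000} to the cubic-level setting, using as analytic input the twin Petersson formulas of Theorem \ref{thm-sPTF}. Set $\omega_\pi=1/L_{\mathrm{fin}}(1,\pi,\mathrm{sym}^2)$, $W=\sum_{\pi\in\mathcal{A}(2k,N^3)}\omega_\pi$, and write $W^{\neq 0}$ for the corresponding sum restricted to $\pi$ with $L_{\mathrm{fin}}(1/2,\pi)\neq 0$. The approximate functional equation for $L_{\mathrm{fin}}(s,\pi)$ (analytic conductor $\asymp k^2N^3$) yields smooth rapidly decaying $V^\pm$ such that
\bna
L_{\mathrm{fin}}(1/2,\pi)=\sum_{n\geq 1}\frac{\lambda_\pi(n)}{\sqrt n}V^+\!\left(\frac{n}{N^{3/2}}\right)+\epsilon(\pi)\sum_{n\geq 1}\frac{\lambda_\pi(n)}{\sqrt n}V^-\!\left(\frac{n}{N^{3/2}}\right).
\ena
Introduce a mollifier $M(\pi)=\sum_{\ell\leq L,\,(\ell,N)=1}x_\ell\lambda_\pi(\ell)/\sqrt{\ell}$ of length $L$ a small positive power of $N$, with coefficients $x_\ell$ to be chosen. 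Cauchy--Schwarz gives
\bna
\left|\sum_{\pi}\omega_\pi L_{\mathrm{fin}}(1/2,\pi)M(\pi)\right|^2\leq W^{\neq 0}\cdot\sum_{\pi}\omega_\pi\left|L_{\mathrm{fin}}(1/2,\pi)M(\pi)\right|^2,
\ena
so the theorem reduces to showing that the ratio of squared first moment to second moment exceeds $(1/4-\epsilon)W$.

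For the first moment, expand $L_{\mathrm{fin}}(1/2,\pi)M(\pi)$ as a double sum over $(n,\ell)$, weight by $\omega_\pi$, and apply the first (untwisted) formula of Theorem \ref{thm-sPTF} to the $V^+$ piece and the second (root-number-twisted) formula to the $V^-$ piece. The diagonals $n=\ell$ contribute the main term, proportional to $x_1\,N^2\varphi(N)$ in both cases; the off-diagonals are sums of Kloosterman sums $S(n,\ell;N^2c)$ and $S(\overline{N^3}n,\ell;c)$ against Bessel functions of arguments $4\pi\sqrt{n\ell}/(N^2c)$ and $4\pi\sqrt{n\ell}/(N^{3/2}c)$ respectively. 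The Weil bound together with $J_{2k-1}(y)\ll y^{2k-1}$ for small $y$ makes these contributions negligible, provided $L$ is chosen small enough that these Bessel arguments stay well below $1$ on the effective range of summation.

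For the second moment, expand $|L_{\mathrm{fin}}(1/2,\pi)M(\pi)|^2$ as a quadruple sum over $(n_1,n_2,\ell_1,\ell_2)$, linearise via Hecke multiplicativity $\lambda_\pi(a)\lambda_\pi(b)=\sum_{d\mid(a,b),(d,N)=1}\lambda_\pi(ab/d^2)$, and again invoke the untwisted formula of Theorem \ref{thm-sPTF}. Up to the local corrections $A_p(c)$ at primes $p\mid N$, the diagonal produces the same quadratic form $Q(\mathbf{x})$ in the $x_\ell$ as in \cite{IwSa2000}; the classical choice $x_\ell=\mu(\ell)P(\log(L/\ell)/\log L)/\sqrt{\ell}$ with an optimal polynomial $P$ then gives the $1/4-\epsilon$ ratio.

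The main obstacle is the off-diagonal in the first moment coming from the twisted formula, whose Kloosterman modulus $N^{3/2}c$ is smaller than the $N^2c$ of the untwisted formula, so the saving per geometric term is weaker. One must balance the length $L$ so that the mollifier is long enough to attain $1/4-\epsilon$ while both off-diagonals stay below the diagonal; the analogous balance carried out by Balkanova--Frolenkov \cite{BF1,BF} for prime-power level transfers here essentially verbatim, with only the local factors at primes dividing $N$ and the $N^2$-versus-$N^{3/2}$ bookkeeping requiring adjustment.
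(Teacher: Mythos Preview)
Your proposal takes a genuinely different route from the paper, and as sketched it has a gap.

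The paper does not use the approximate functional equation, nor the root-number-twisted formula, at any point of the non-vanishing argument. Following Balkanova--Frolenkov, it sets
\[
M_j(m,u)=\sum_{\pi\in\mathcal A(2k,N^3)}\frac{\lambda_\pi(m)\,L_{\mathrm{fin}}(\tfrac12+u,\pi)^j}{L_{\mathrm{fin}}(1,\pi,\mathrm{sym}^2)}\qquad (j=1,2),
\]
expands the Dirichlet series for $\Re u>3/4$, applies only the \emph{untwisted} identity of Theorem~\ref{thm-sPTF}, and then continues meromorphically to $u=0$ via the Mellin--Barnes representation of $J_{2k-1}$. This yields the asymptotics \eqref{M-1-result}--\eqref{M-2-result} with error $O_k(m^{1/2+\epsilon})$, after which the mollifier is taken of length $N^{\Delta}$ with $\Delta=\tfrac32-\epsilon$ (i.e.\ square root of the conductor $N^3$, not a ``small positive power''). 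The standard diagonalisation then gives the ratio $\Delta/(3+2\Delta)=\tfrac14-\epsilon$.

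Two things in your sketch do not go through. First, the second formula of Theorem~\ref{thm-sPTF} has \emph{no} diagonal term, so the sentence ``the diagonals $n=\ell$ contribute the main term \dots\ in both cases'' is wrong for the $V^-$ piece: that piece contributes only through the Kloosterman side. Second, and more seriously, to reach $1/4$ you must take the mollifier length $L=N^{3/2-\epsilon}$; at that length, with $n$ running up to $N^{3/2}$ from the approximate functional equation, the Bessel argument $4\pi\sqrt{n\ell}/(N^{3/2}c)$ in the twisted formula is of order~$1$ already at $c=1$, so ``Weil bound plus $J_{2k-1}(y)\ll y^{2k-1}$'' does not make the twisted contribution negligible. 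One would instead have to extract a genuine main term from small $c$ (as in Iwaniec--Sarnak), which is exactly the delicate step you have not carried out. Your closing appeal to Balkanova--Frolenkov does not rescue this, because their method is precisely the analytic-continuation approach the paper adopts, and it bypasses the twisted sum entirely rather than ``balancing'' it against anything.
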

\begin{remark}
If we apply Proposition \ref{prop-sPTF-cuspidal}, the Petersson formula for
the given
cuspidal parameters, we can prove that for some $M$ with $1\leq M\leq N$ and $(M,N)=1$, similar result also holds for the subsets $\mathcal A(2k,N^3,M)$ (see \eqref{automorphic-rep-for-cuspidal-types} for the notation).
\end{remark}

Next, we turn to the case of cuspidal automorphic representations
associated to Hecke-Maass cusp newforms of cubic level and derive some
Kuznetsov's formulas.

\begin{thm}\label{thm-sKTF}Let $h(z)$ be an even function such that
h(z) is holomorphic in the region $|\im(z)|<A$ in which it satisfies
\bna
h(z)\ll (1+|z|)^{-B}
\ena
for some positive constant $A$ and sufficiently large $B$.
We have
\bna
\sum_{\pi\in\mathcal A(0,N^3)}h(t_\pi)
\frac{\lambda_{\pi}(n_1)\lambda_{\pi}(n_2)}{L_{\mathrm{fin}}(1,\pi,\mathrm{sym^2})}
&=&\delta(n_1,n_2)N^2\varphi(N)\frac{1}{2\pi^{2}}\int_{-\infty}^\infty h(t)\tanh(\pi t) tdt\\
&&+i
\sum_{c\geq 1}\frac{A_{N}(c)}{c}S(n_1,n_2;N^2c)
\int_{-\infty}^{+\infty}\frac{h(t)t}{\cosh(\pi t)}J_{2it}\left(\frac{4\pi\sqrt{n_1n_2}}{N^2c}\right)dt,
\ena
where $\{\pm it_\pi\}$ is the set of Langlands parameters of $\pi$,
and the other notation is as in Theorem \ref{thm-sPTF}.
\end{thm}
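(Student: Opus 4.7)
The plan is to mimic the proof of Theorem \ref{thm-sPTF}, keeping the finite-place input unchanged and replacing only the archimedean component of the global test function. Starting from the automorphic kernel
\[
K_f(x,y)=\sum_{\gamma\in Z(\mathbb{Q})\backslash G(\mathbb{Q})} f(x^{-1}\gamma y)
\]
of a factorizable test function $f=\otimes_v f_v$ on $PGL_2(\mathbb{A})$, I integrate $K_f(u_1,u_2)\psi(n_1 u_1)\overline{\psi(n_2 u_2)}$ over $(U(\mathbb{Q})\backslash U(\mathbb{A}))^2$ and compare the spectral and geometric expansions.

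At each non-archimedean place $p$, I take $f_p$ exactly as in the derivation of Theorem \ref{thm-sPTF}: the unit of the spherical Hecke algebra for $p\nmid N$, and a pseudo-coefficient of a simple supercuspidal representation for $p\mid N$. Summing over the cuspidal types at each $p\mid N$ has the same effect as before: the continuous spectrum is annihilated, the spectral side is restricted to $\pi\in\mathcal{A}(0,N^3)$ with the $L_{\mathrm{fin}}(1,\pi,\mathrm{sym}^2)$ normalization, and the geometric side produces the factor $A_N(c)/c$ together with Kloosterman sums of modulus $N^2 c$. In particular, the entire finite-place calculation -- including the proof that the newvector appears with the correct constant and the restriction to $(c,N)$-admissible moduli -- is borrowed verbatim from the proof of Proposition \ref{prop-sKTF-cuspidal} and Theorem \ref{thm-sPTF}.

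At the archimedean place, I replace the weight-$2k$ discrete-series matrix coefficient by a bi-$SO(2)$-invariant function $f_\infty$ whose Selberg--Harish-Chandra spherical transform equals the prescribed even function $h(t)$; the holomorphy of $h$ in the strip $|\mathrm{Im}(z)|<A$ and its polynomial decay guarantee, via the inverse spherical transform, that such $f_\infty$ exists and is admissible. On the spectral side, this contributes the weight $h(t_\pi)$ on each Maass cusp form of spectral parameter $t_\pi$, and the identity-orbit contribution combined with the Plancherel measure on $PGL_2(\mathbb{R})$ produces the diagonal term
\[
\delta(n_1,n_2)\,N^2\varphi(N)\frac{1}{2\pi^2}\int_{-\infty}^\infty h(t)\tanh(\pi t)\,t\,dt.
\]
On the geometric side, the archimedean Bruhat-cell orbital integral attached to the Weyl element, evaluated under the spherical inversion formula, transforms into precisely the Bessel kernel $\int h(t)\,t\, J_{2it}(\cdot)/\cosh(\pi t)\,dt$. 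This is the Maass analogue of the passage from $J_{2k-1}(x)$ to its integral transform against $h$.

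The principal technical obstacle is the archimedean orbital-integral computation: one must verify that spherical inversion applied to the unipotent--Weyl--unipotent orbital integral yields exactly the kernel $J_{2it}(4\pi\sqrt{n_1 n_2}/(N^2 c))/\cosh(\pi t)$, together with the leading factor $i$ and the absence of any residual contribution from complementary-series parameters in the strip (justified by the holomorphy hypothesis on $h$). Once this archimedean identity is in hand, all remaining ingredients -- the vanishing of the continuous spectrum, the emergence of $A_N(c)$, the reduction to Hecke eigenvalues via the newvector choice, and the summation over cuspidal parameters at the ramified primes -- carry over without modification from the proof of Theorem \ref{thm-sPTF}.
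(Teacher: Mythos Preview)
Your overall strategy is correct and coincides with the paper's: the finite-place computation is taken verbatim from the Petersson case, the archimedean test function is replaced by a bi-$K_\infty$-invariant $f_0$, and the theorem follows from Proposition~\ref{prop-sKTF-cuspidal} by summing over the cuspidal parameters $1\le M\le N$, $(M,N)=1$.

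However, your description of the archimedean step contains a genuine gap. You assert that taking $f_\infty$ with spherical transform $h$ makes the spectral side contribute simply the weight $h(t_\pi)$. In the relative trace formula used here one integrates the kernel against unipotent characters evaluated at diagonal elements $\bigl(\begin{smallmatrix}\tilde n_i^r&\\&1\end{smallmatrix}\bigr)$, and the archimedean Whittaker function is $W_{\epsilon_\pi,0}\bigl(\begin{smallmatrix}r&\\&1\end{smallmatrix}\bigr)=2r^{1/2}K_{it_\pi}(2\pi r)$; thus the spectral side of the raw identity carries an extra factor $K_{it_\pi}(2\pi r)\overline{K_{it_\pi}(2\pi r)}\cosh(\pi t_\pi)$ (see Proposition~\ref{prop:spectral-side-J}), not just $h(t_\pi)$. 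The paper removes this by keeping $r>0$ as a free parameter, obtaining the pre-Kuznetsov formula of Proposition~\ref{pre-Kuznetsov-trace-formula}, and only then integrating both sides over $r\in\R_+^\times$; the $K$-Bessel integral \eqref{temp-produc-of-Kbessel-intgral} kills the unwanted factor on the spectral side, while on the geometric side the $r$-integration is precisely what turns the archimedean orbital integrals into $V(0)=\tfrac{1}{4\pi}\int h(t)\tanh(\pi t)\,t\,dt$ for the diagonal term and into the $J_{2it}$-kernel for the Weyl-cell term. Your proposal skips this $r$-integration entirely, so the claimed direct identification of the diagonal and Bessel terms is not justified.

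A second, smaller gap: for $h$ as in the hypothesis, the inverse spherical transform does not in general produce $f_0\in C_c^\infty(GL_2(\R)^+,Z_\infty K_\infty)$; the isomorphism $\mathcal S$ lands in the Paley--Wiener space. The paper first establishes the identity for $h(iz)\in PW^\infty(\C)^{\mathrm{even}}$ (i.e.\ for compactly supported $f_0$), and then invokes the approximation argument of \cite[Section~8]{KnLi2013} to extend to the class of $h$ in the statement. Your sentence ``the holomorphy of $h$ \ldots\ guarantee, via the inverse spherical transform, that such $f_\infty$ exists and is admissible'' is not accurate as written and needs to be replaced by this two-step procedure.
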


Note that each $\pi\in\mathcal A(0,N^3)$ is identified to a Hecke-Maass
cusp newform of level $N^3$ and trivial nebentypus with Laplacian eigenvalue
$\frac{1}{4}+t_\pi^2$.
Theorem \ref{thm-sKTF} is the Kuznetsov's formula over newforms of level $N^3$,
which is decued by summing over the Kunzetsov's formula for given cuspidal parameters in Proposition \ref{prop-sKTF-cuspidal}.
As an application of Theorem \ref{thm-sKTF},
we have the following weighted Weyl's law.

\begin{thm}\label{cor-weighted-weyl-law}
We have
\bna
\sum_{\tiny\begin{array}{c}
\pi\in\mathcal A(0,N^3)\\
0< t_{\pi}\leq T
\end{array}}\frac{1}{L_{\mathrm{fin}}(1,\pi,\mathrm{sym}^2)}
=N^2\varphi(N)\frac{T^2}{2\pi^2}+ O_N\left(\frac{T}{\log T}\right).
\ena
\end{thm}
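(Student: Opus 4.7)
The plan is to apply Theorem~\ref{thm-sKTF} with $n_1=n_2=1$ to a one-parameter family of even smooth test functions sandwiching the indicator $\mathbf{1}_{[-T,T]}$, and then extract the weighted Weyl asymptotic by balancing the smoothing width against the size of the geometric side.

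Fix a smoothing parameter $\Delta=\Delta(T)$ and pick non-negative even smooth functions $h_T^{-}\le \mathbf{1}_{[-T,T]}\le h_T^{+}$ with $h_T^{\pm}$ supported in $[-(T\pm\Delta),T\pm\Delta]$, identically $1$ on the corresponding inner interval, and satisfying $(h_T^{\pm})^{(j)}\ll_j \Delta^{-j}$. Feeding $h=h_T^{\pm}$ into Theorem~\ref{thm-sKTF} with $n_1=n_2=1$ and using $t\tanh(\pi t)=t+O(te^{-2\pi|t|})$, the diagonal contribution evaluates to
$$\frac{N^2\varphi(N)}{2\pi^2}\int_{-\infty}^\infty h_T^{\pm}(t)\,t\tanh(\pi t)\,dt=\frac{N^2\varphi(N)\,T^2}{2\pi^2}+O_N(T\Delta).$$
Since $h_T^{-}\le \mathbf{1}_{[-T,T]}\le h_T^{+}$ and the spectral summands $1/L_{\mathrm{fin}}(1,\pi,\mathrm{sym}^2)$ are positive, the spectral side sandwiches the target sum directly.

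For the off-diagonal Kloosterman–Bessel contribution
$$\sum_{c\ge 1}\frac{A_N(c)}{c}\,S(1,1;N^2c)\int_{-\infty}^\infty \frac{h_T^{\pm}(t)\,t}{\cosh(\pi t)}\,J_{2it}\!\left(\frac{4\pi}{N^2c}\right)dt,$$
I would split the $c$-sum at $c_0\asymp T/N^2$. For $c\ge c_0$ the Bessel argument $x=4\pi/(N^2c)$ satisfies $xT\le 1$; expanding $J_{2it}(x)\sim (x/2)^{2it}/\Gamma(1+2it)$ and applying Stirling on $\Gamma(1+2it)$ to cancel $1/\cosh(\pi t)$, the inner integral reduces to an essentially purely oscillatory integral whose phase has derivative $\asymp\log(N^2cT)$, so repeated integration by parts against the smooth $h_T^{\pm}$ produces strong decay in $c$. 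For $c<c_0$ the same oscillatory-integral strategy, together with Weil's bound $|S(1,1;N^2c)|\ll_\epsilon (N^2c)^{1/2+\epsilon}$ and $|A_N(c)|\ll N$, should control the contribution.

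Choosing $\Delta\asymp 1/\log T$ forces the diagonal smoothing error $T\Delta=T/\log T$ to match the advertised error, provided the off-diagonal sum can be shown to be $O_N(T/\log T)$ as well. The main obstacle will be precisely the off-diagonal bound for bounded $c$: there the oscillation frequency is only of constant order, limiting the gain from each integration by parts, so one must balance the derivative cost $\Delta^{-1}$ against the phase carefully and lean on the Weil bound to absorb the remaining $T^{1/2}$-type growth within the $O_N(T/\log T)$ budget.
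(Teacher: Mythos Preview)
Your sandwich strategy is natural, but it runs into two real obstacles. First, a minor one: compactly supported smooth $h_T^{\pm}$ cannot be holomorphic in any strip $|\im z|<A$, so they do not satisfy the hypothesis of Theorem~\ref{thm-sKTF} as stated. More seriously, your integration-by-parts scheme cannot control the $c$-sum. After Stirling the amplitude of the integrand is $\asymp|t|^{1/2}$ and the phase has $t$-derivative $\asymp\log(tN^2c)$; each integration by parts divides by this logarithm but, on the transition intervals of $h_T^{\pm}$, costs a factor $\Delta^{-1}=\log T$. The net saving per step is therefore only $\log T/\log(TN^2c)$, which is $O(1)$ for bounded $c$ and merely a power of $\log c$ for large $c$. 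After any fixed number $j$ of steps the integral is still $\gg T^{1/2}(\log T)^{j-1}(\log c)^{-j}$, and this is not summable against the Weil factor $c^{-1/2+\epsilon}$: the off-diagonal sum diverges term by term under your estimates. The difficulty is not a residual ``$T^{1/2}$ growth'' to be absorbed, but the absence of any polynomial decay in $c$.

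The paper circumvents this by localizing on the Fourier side. Following Duistermaat--Guillemin and Li, one fixes $h$ with $\widehat h$ supported in $(-1,1)$ and sets $h_{\mu,L}(z)=h(L(\mu+z))+h(L(\mu-z))$. The asymptotic quoted from \cite{Li2011} expresses $\int J_{2it}(x)\tfrac{h_{\mu,L}(t)t}{\cosh\pi t}\,dt$ essentially as $\widehat h$ evaluated at $\tfrac{1}{\pi L}\log\tfrac{4\mu}{xe}$, which \emph{truncates} the $c$-sum to $c\ll e^{\pi L}/(N^2\mu)$; Weil then suffices. One first extracts a weighted local bound (Lemma~\ref{lemma:local-estimation}), and replaces your majorant/minorant device by the approximate identity $1=L\int_{\R}h(L(\mu-t_\pi))\,d\mu$, with the local bound handling the edge errors. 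The choice $L=\tfrac{\epsilon}{2\pi}\log(N^2T)$, the Fourier dual of your $\Delta\asymp 1/\log T$, then yields the stated $O_N(T/\log T)$.
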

The weighted Weyl's law follows from a truncated Kuznetsov's formula for the given cuspidal parameters in Proposition \ref{prop:truncated-Kuznetsov}.
By Theorem \ref{cor-weighted-weyl-law}, for given $N$, we have
that
the density of Hecke-Maass newforms in the space of Maass cusp forms of level $N^3$ with trivial nebentypus is
\bna
\frac{\varphi^2(N)}{N^2}.
\ena

This paper is arranged as follows.
In Section \ref{sec-2}, we introduce our notation and recall simple supercuspidal representations over non-Archimedean places.
After that, we specify our choices of test functions over each local place and
recall the local Whittaker newforms.
Then we compute the geometric sides and spectral sides of the relative trace formula respectively,
and  establish the Fourier formulas for the given cuspidal parameters in Propositions \ref{prop-sPTF-cuspidal} and \ref{prop-sKTF-cuspidal}.
In Section \ref{sec:non-vanishing}, we apply Theorem \ref{thm-sPTF} to obtain the asymptotic formulas for the first and second moments, and then prove our non-vanishing results in Theorem \ref{nonvanishing}.
The weighted Wely law in Theorem \ref{cor-weighted-weyl-law} follows from the
truncated Kuznetsov's trace formula in Proposition \ref{prop:truncated-Kuznetsov},
which is discussed in Section \ref{sec:Weyl-Law}.
For the self-containess of this paper, we give a detailed computation for the global matrix coefficient of cuspidal automorphic representations in Appendix \ref{appendix-A}.

\section{Notation and Preliminaries}\label{sec-2}
\setcounter{equation}{0}

Throughout this article we use the following notation.
Let $\Q$ be the field of rational numbers.
For a place $v$ of $\Q$,
let $\Q_v$ be the local complete field of $\Q$ with respect to the valuation $|\cdot|_v$.
If $v=p$ is a non-Archimedean place,
we denote by $\Z_p$ the ring of integers of $\Q_p$,
$p\Z_p$ the maximal ideal of $\Z_p$, $v_p$ the discrete valuation
 and $k_{\Q_p}$ the residue field.
Let $\A=\A_\Q$ be the Adele ring of $\Q$.

Let $G={\rm GL}_2$ be the general linear algebraic group defined over $\Q$,
$Z$ be the center of $G$ and $\overline G=G/Z$.
Denote $G_v={\rm GL}_2(\Q_v)$. If $v=p$ is a non-Archimedean place, we denote $K_p=GL_2(\Z_p)$ and
\bea
K_p(n)=\left\{\bma a&b\\c&d\ema\in K_p:\quad c,\ d-1
\in p^n\Z_p\right\}\label{K-p-n}.
\eea

\subsection{Haar measures}\label{subsec-Haarmeasure}
We fix a non-trivial additive character $\psi=\prod_v\psi_v$ of $\A/\Q$
with
\bea
\psi_v(x)
=\left\{
\begin{aligned}
&e^{2\pi ix},\quad && \textrm{if\ }v=\infty,\\
&e^{-2\pi ir_p(x)},\quad && \textrm{if\ }v=p<\infty,
\end{aligned}
\right.\label{additive-character}
\eea
where $r_p(x)\in \Q$ is the $p$-principle part of $x$ so that $x\in r_p(x)+\Z_p$.
Let $dx_v$ be the additive Haar measure on $\Q_v$ which is self-dual with respect to $\psi_v$ and let
\bna
d^\times x_v=L_v(1,\mathbf{1}_{\Q_v})\frac{dx_v}{|x_v|_v}
\ena
be the multiplicative Haar measure on $\Q_v^\times$.
Then over $\Q_p$ one has
\bna
\vol(\Z_p,dx_p)=\vol(\Z_p^\times,d^\times x_p)=1.
\ena
Let $dx=\prod_vdx_v$ and $d^\times x=\prod_vd^\times x_v$ be measures on $\A$ and $\A^\times$,
respectively.

Over $G_v$, by the Iwasawa decomposition, for $g_v\in G_v$,
\bna
g_v=\bma z_v&\\&z_v\ema\bma 1&x_v\\&1\ema \bma y_v&\\&1\ema
 \kappa_v
\ena
where $z_v,y_v\in \Q_v^\times$, $x_v\in \Q_v$ and $\kappa_v\in K_v$
with $K_\infty=SO(2)$ and $K_p=GL_2(\Z_p)$. The measure $dg_v$ on $G_v$ is defined by
\bna
dg_v=d^\times z_v dx_v\frac{d^\times y_v}{|y_v|_v}d\kappa_v.
\ena
In such case,  we have the measure on $K_v$ normalized by $\vol(K_v,d\kappa_v)=1$.
Moreover, we give $\overline G_v=G_v/Z_v$ the quotient measure
by $Z_v\simeq \Q_v^\times$.

Let $dg=\prod_vdg_v$ be the measure on $ G(\A)$. Similarly we define the quotient
measure on $\overline G(\A)$ by $Z(\A)\simeq\A^\times$.

\subsection{Cuspidal automorphic representations}

For $k>1$ an integer and $N$ a square-free number,
let $\mathcal A(2k, N^3)$ be the set of cuspidal  automorphic representations
of $\overline G(\A)$ which are
holomorphic of weight $2k$ and level $N^3$.
Each $\pi=\otimes_v\pi_v\in\mathcal A(2k,N^3)$ satisfies the following conditions.
\bit
\item For $v=\infty$,
$\pi_\infty=\pi_{2k}$ is a discrete series representation of $\overline G_\infty$ of weight $2k$.
\item For $v=p<\infty$ with  $p\nmid N$, $\pi_p$ is an unramified representation of $\overline G_p$.
\item For $v=p$ with  $p\mid N$, $\pi_p$ has conductor $p^3$, i.e.
$\dim \pi_p^{K_p(3)}=1$ where $K_p(n)$ is defined in (\ref{K-p-n}).
By the result in \cite{KnLi2015},
$\pi_p=\pi_{m_p,\zeta_p}$
is a simple supercuspidal representation of $\overline G_p$,
characterized by $(m_p,\zeta_p)$
with $1\leq m_p\leq p-1$ and $\zeta_p\in\{\pm 1\}$.
Here $\zeta_p$
is the local root number and the classification depends on the choice of the
local additive character
$$
\tilde \psi_p(x):=\psi_p(p^{-1}x).
$$
\eit

Let $\mathcal A(0,N^3)$
be the set of cuspidal  automorphic representations of $\overline G(\A)$
which are level $N^3$, unramified at $v=\infty$ and $v=p$ with $p\nmid N$.
For $\pi=\otimes_v\pi_v\in \mathcal A(0,N^3)$,
$\pi_\infty$ is an irreducible unramified unitary
infinite dimensional representation of $\overline G_\infty$,
which can be realized as the normalized induced representation
\bna
\pi(\epsilon_\pi,it_\pi)=\mathrm{Ind}_{B(\R)}^{G(\R)}\chi_{\epsilon_\pi,it_\pi},
\ena
where $B$ is the standard parabolic subgroup of $G$ and
$\chi_{\epsilon_\pi,it_\pi}$ is a character of $B(\R)$ given by
\bna
\chi_{\epsilon_\pi,it_\pi} \bma a&b\\0&d\ema=\mathrm{sgn}(a)^{\epsilon_{\pi}}\mathrm{sgn}(d)^{\epsilon_{\pi}}
\left|\frac{a}{d}\right|^{it_\pi},\quad \bma a&b\\0&d\ema\in B(\R).
\ena
Here $\epsilon_\pi\in\{0,1\}$ and $\{\pm i t_\pi\}$ is the  set of Langlands parameters of $\pi$ such that
\bit
\item either $t_\pi\in \R$, in which case $\pi_\infty=\pi(\epsilon_{\pi},it_{\pi})$ is a principal series,
\item or $t_\pi\in i\R$ with $0<|t_\pi|<\frac{1}{2}$,
 in which case $\pi_\infty=\pi(\epsilon_{\pi},it_{\pi})$
is a complementary series.
\eit
\medskip

For given $1\leq M\leq N$ with $(M,N)=1$, let
\bna
\mathbf m =(m_p)_{p\mid N}
\ena
be a tuple of cuspidal parameters with $m_p\equiv M\bmod p$ for each $p\mid N$.
Let $\mathcal A(*,N^3,M)$
be the set of cuspidal automorphic representations
in $\mathcal A(*,N^3)$
whose local components are characterized by $\mathbf m$,
\bea
\mathcal A(*,N^3,M)=\{\pi\in\mathcal A(*,N^3):\ \pi_p=\pi_{m_p,\zeta_p},\ \zeta_p\in\{\pm 1\},\ \forall p\mid N\}
\label{automorphic-rep-for-cuspidal-types}.
\eea
One has
\bea
\mathcal A(*,N^3)=\bigcup_{1\leq M\leq N\atop{(M,N)=1}}
\mathcal A(*,N^3,M).\label{the-space-of-whole-space}
\eea

\subsection{The choice of the test function} \label{sec:test}
Let $L^1(G(\A),Z(\A))$ be the $L^1$-space
of $Z(\A)$ invariant functions, which are absolutely integrable over $\overline G(\A)$.
For $f\in L^1(G(\A),Z(\A))$, let $R(f)$ act on
$\phi\in L^2(\overline G(\Q)\backslash\overline G(\A))$
by
\bna
R(f)\phi(x)=\int_{\overline G(\Q)\backslash \overline G(\A)}K_f(x,y)\phi(y)dy,
\ena
where
\begin{equation*}
K_f(x,y)=\sum_{\gamma\in \overline G(\Q)}f(x^{-1}\gamma y)
\end{equation*}
is the automorphic kernel function.
We choose $f=\prod_v f_{v}$ so that $R(f)$ give
 simple trace formulas on $\mathcal A(2k,N^3,M)$ and $\mathcal A(0,N^3,M)$, respectively.
 Such $f$ will be chosen in the following subsections.
\subsubsection{Non-Archimedean places}
For $v=p$ with $p\nmid N$, we choose $f_{p}=\mathbf 1_{Z_pK_p}$, which is the
characteristic function of $Z_pK_p$.

For all $v=p$ with $p\mid N$, $\pi_p=\pi_{m_p,\zeta_p}$
with $m_p\equiv M\bmod p$ and $\zeta_p\in\{\pm 1\}$.
We choose the local test functions as
\bna
f_{p,M}&=&\sum_{\zeta_p\in\{\pm 1\}}d_{\pi_{m_p,\zeta_p}}\langle \pi_{m_p,\zeta_p}(g)u_{\pi_{m_p,\zeta_p}},u_{\pi_{m_p,\zeta_p}}\rangle,\\
\tilde f_{p,M}&=&\sum_{\zeta_p\in\{\pm 1\}}\zeta_p d_{\pi_{m_p,\zeta_p}}\langle \pi_{m_p,\zeta_p}(g)u_{\pi_{m_p,\zeta_p}},u_{\pi_{m_p,\zeta_p}}\rangle,\\
\ena
where  $d_{\pi_{m_p},\zeta_p}$
is the formal degree of $\pi_{m_p,\zeta_p}$, and
$u_{\pi_{m_p},\zeta_p}$ is a  unit new vector in $\pi_{m_p,\zeta_p}$.
We have the following explicit formulas (see \cite[Theorem 7.1]{KnLi2015}).
\begin{prop}\label{prop-test-func-mkN}For $p\mid N$,
 $f_{p,M}$ and $\tilde f_{p,M}$ vanish outside the sets
    \bna
    Z_p\left[\begin{matrix}
        \Z_p^\times& p^{-1}\Z_p\\
        p^2\Z_p&\Z_p^\times
    \end{matrix}\right],\quad
    Z_p\left[
\begin{matrix}
\Z_p&p^{-2}\Z_p^\times\\
p\Z_p^\times&\Z_p
\end{matrix}
\right]
    \ena
respectively, and,
 for $z\in Z_p$,
            \bna
    f_{p,M}(zg)&=&(p+1)\left\{
    \begin{aligned}
        &\sum_{\ell\in k_{\Q_p}^\times}
        \tilde\psi\left(b\ell+\frac{m_pc}{a}\ell^{-1}\right),
        \quad &&\textrm{if\ }g=\bma a&p^{-1}b\\p^{2}c&d\ema\in\left[\begin{matrix}
            \Z_p^\times& p^{-1}\Z_p\\
            p^2\Z_p&1+p\Z_p
        \end{matrix}\right],\\
        &0,
        \quad &&\mbox{otherwise},
    \end{aligned}
    \right.\\
        \tilde f_{p,M}(zg)&=&(p+1)\left\{
    \begin{aligned}
        &\sum_{\ell\in k_{\Q_p}^\times}
        \tilde\psi\left(\frac{c}{a}\ell+\frac{m_pb}{d}\ell^{-1}\right),
        \quad &&\textrm{if\ }g=\bma c&p^{-2}d\\pa&b\ema\in\left[\begin{matrix}
            \Z_p& p^{-2}\Z_p^\times\\
            p\Z_p^\times&\Z_p
        \end{matrix}\right],\\
        &0,
        \quad &&\mbox{otherwise}.
    \end{aligned}
    \right.
    \ena
Here $\tilde \psi(x)=\psi_p(p^{-1}x)$ with $\psi_p$ in \eqref{additive-character}.
\end{prop}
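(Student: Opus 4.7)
The plan is to realize each simple supercuspidal $\pi_{m_p,\zeta_p}$ as a compactly induced representation and then read off the matrix coefficient $\langle \pi_{m_p,\zeta_p}(g)u_{\pi_{m_p,\zeta_p}},u_{\pi_{m_p,\zeta_p}}\rangle$ explicitly before summing over $\zeta_p=\pm 1$. Following Gross--Reeder and Knightly--Li, $\pi_{m_p,\zeta_p}=\operatorname{c-Ind}_{J_p}^{G_p}\chi_{m_p,\zeta_p}$, where the compact-mod-center subgroup $J_p\subset G_p$ is generated by $Z_p$, the pro-$p$ Iwahori, and the affine-Weyl uniformizer $w_p=\bma 0 & p^{-1}\\ p & 0\ema$; the character $\chi_{m_p,\zeta_p}$ is trivial on $Z_p$, equals $\zeta_p$ at $w_p$, and on the pro-$p$ Iwahori is an explicit exponential in the matrix entries involving $m_p$ via $\tilde\psi_p$. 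The unit new vector is obtained by $k_{\Q_p}^\times$-averaging the standard indicator so as to be fixed by $K_p(3)$; this averaging is the source of the sum over $\ell\in k_{\Q_p}^\times$ that will appear in the final formula.

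First I would identify the two claimed supports. A direct matrix calculation shows that $Z_p\bma\Z_p^\times & p^{-1}\Z_p\\ p^2\Z_p & \Z_p^\times\ema$ is closed under products and decomposes as the disjoint union of double cosets $J_p\,\mathrm{diag}(\ell,1)\,J_p$ for $\ell\in k_{\Q_p}^\times$, while $Z_p\bma\Z_p & p^{-2}\Z_p^\times\\ p\Z_p^\times & \Z_p\ema$ is the corresponding $w_p$-translate. Next, using the compact-induction model with pairing $\langle F_1,F_2\rangle=\sum_{x\in\overline{J_p}\backslash\overline{G_p}}F_1(x)\overline{F_2(x)}$ and action $(\pi(g)F)(x)=F(xg)$, the double-coset analysis collapses the matrix coefficient at the averaged new vector to a single sum indexed by $\ell$, of the form
\[
d_{\pi_{m_p,\zeta_p}}\langle\pi_{m_p,\zeta_p}(g)u,u\rangle=(p+1)\sum_{\ell\in k_{\Q_p}^\times}\chi_{m_p,\zeta_p}(t_\ell^{-1}gt_\ell)\,\mathbf 1_{J_p}(t_\ell^{-1}gt_\ell),
\]
with the factor $(p+1)$ absorbing the formal degree together with the volume normalizations of \S\ref{subsec-Haarmeasure}.

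Finally I would sum over $\zeta_p=\pm 1$. Because $\chi_{m_p,+}$ and $\chi_{m_p,-}$ agree on the Iwahori piece but differ by the sign $\zeta_p$ on the $w_p$-coset, the plain sum defining $f_{p,M}$ doubles on the first support and cancels on the second, while the signed sum defining $\tilde f_{p,M}$ does the opposite --- producing the disjoint supports asserted in the proposition. Evaluating $\chi_{m_p,\zeta_p}$ on an Iwahori element $\bma a & p^{-1}b\\ p^2c & d\ema$ after conjugation by $t_\ell=\mathrm{diag}(\ell,1)$ then yields $\tilde\psi_p(b\ell+m_pca^{-1}\ell^{-1})$, and an analogous reduction on the $w_p$-coset gives $\tilde\psi_p(ca^{-1}\ell+m_pbd^{-1}\ell^{-1})$. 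The main obstacle is the character-versus-entry bookkeeping: one must track the precise form of $\chi_{m_p,\zeta_p}$ (using $\tilde\psi_p(x)=\psi_p(p^{-1}x)$) to confirm both the $m_p$-twist in the $\ell^{-1}$-term and the overall normalization constant $p+1$. This calculation is carried out cleanly in \cite[\S7]{KnLi2015}, whose construction I would follow.
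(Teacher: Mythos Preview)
Your proposal is correct and aligns with the paper's treatment: the paper does not give an independent proof of this proposition but simply cites \cite[Theorem 7.1]{KnLi2015}, and your sketch faithfully outlines the compact-induction construction and the $\zeta_p$-summation argument carried out there. In particular, your identification of the two supports via the $J_p$-double cosets and the $w_p$-translate, together with the cancellation/doubling under $\sum_{\zeta_p}$ versus $\sum_{\zeta_p}\zeta_p$, is exactly the mechanism behind the cited result.
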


\subsubsection{The Archimedean place - weight $2k$ case}
Assume $\pi_\infty=\pi_{2k}$ is a discrete series of weight $2k$.
We choose the test function  $f_{\infty}=f_{2k}$ as
\bna
f_{2k}(g)=d_{2k}\overline{\langle\pi_{2k}(g)u_{2k},u_{2k}\rangle}
\ena
where $u_{2k}$ is a unit lowest vector and $d_{2k}$ is the formal degree of $\pi_{2k}$.
Such test function has been explicitly calculated in \cite[Propositions 14.5]{KnLi2006}
(or see \cite{RR}).
We list the result in the following proposition.
\begin{prop}\label{prop-test-funct-weight-2k-Arch}
The test function $f_{2k}$  is
    \bna
    f_{2k}(g)
    =
    \left\{
    \begin{aligned}
        &\frac{2k-1}{4\pi}
        \frac{\det (g)^{k}(2i)^{2k}}{(-b+c+(a+d)i)^{2k}},\quad &&\textrm{if\ }\det(g)>0, \\
        &0,\qquad &&\textrm{if\ }g\in GL(2,\R)^-,
    \end{aligned}
    \right.
    \ena
    where $g=\left( \begin{smallmatrix} a&b\\c&d \end{smallmatrix}\right)$.
Moreover, $f_{2k}$ is integrable over $\overline G_\infty$ if $k>1$.
\end{prop}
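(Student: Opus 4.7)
The plan is to compute the matrix coefficient $\langle \pi_{2k}(g)u_{2k},u_{2k}\rangle$ in closed form and then apply the definition $f_{2k}(g)=d_{2k}\overline{\langle \pi_{2k}(g)u_{2k},u_{2k}\rangle}$. First I would realize $\pi_{2k}$ as the holomorphic discrete series on the weighted Bergman space $\mathcal{H}_{2k}$ of holomorphic functions $F$ on the upper half plane $\bh$ with inner product
\[
\langle F_1,F_2\rangle=\int_\bh F_1(z)\overline{F_2(z)}\,y^{2k-2}\,dx\,dy,
\]
on which $g\in GL(2,\R)^+$ acts by $(\pi_{2k}(g)F)(z)=\det(g)^{k}(cz+d)^{-2k}F(g\cdot z)$. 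The unit lowest weight vector is $u_{2k}(z)=c_k(z+\rmi)^{-2k}$, where $c_k$ is fixed by $\|u_{2k}\|=1$ via a standard beta integral. Because $u_{2k}$ spans the one-dimensional lowest $K$-type, $SO(2)$ acts on it by a unitary character and $|\langle \pi_{2k}(g)u_{2k},u_{2k}\rangle|$ is bi-$K$-invariant.

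Next I would evaluate $\langle \pi_{2k}(g)u_{2k},u_{2k}\rangle$ by direct substitution, either using the reproducing kernel of $\mathcal{H}_{2k}$ or, exploiting $K\times K$-equivariance, by reducing first to diagonal matrices where the integral becomes elementary. The result has the shape $C_k\det(g)^k(-b+c+(a+d)\rmi)^{-2k}$ for an explicit constant $C_k$. Combined with the standard value of the formal degree $d_{2k}=(2k-1)/(4\pi)$ of the weight-$2k$ discrete series (with respect to the Haar measure fixed in Section \ref{subsec-Haarmeasure}) and complex conjugation per the definition of $f_{2k}$, this yields the stated formula. The vanishing on $GL(2,\R)^-$ is built into the discrete series convention in \cite[Proposition 14.5]{KnLi2006}, where $\pi_{2k}$ is defined on the identity component and extended by zero.

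For integrability over $\overline G_\infty$: only the identity component matters since $f_{2k}$ vanishes on $GL(2,\R)^-$. By bi-$K$-invariance of $|f_{2k}|$ and the Cartan decomposition $g=k_1a_tk_2$ with $a_t=\mathrm{diag}(\rme^t,\rme^{-t})$ and $t\geq 0$, the Haar measure there is proportional to $\sinh(2t)\,dt\,dk_1\,dk_2$. A direct calculation on $a_t$ gives $(a+d)^2+(b-c)^2=4\cosh^2(t)$ and $\det(a_t)=1$, so
\[
|f_{2k}(a_t)|=\frac{2k-1}{4\pi\cosh^{2k}(t)}.
\]
The integral $\int_0^\infty\sinh(2t)\cosh^{-2k}(t)\,dt$ converges if and only if $2k>2$, that is, $k>1$, yielding the integrability assertion.

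The main obstacle is reconciling the normalization constants: the precise value of $c_k$ making $u_{2k}$ a unit vector, the value of $d_{2k}$ under the Haar measure of Section \ref{subsec-Haarmeasure}, and the sign/branch conventions in the automorphy factor that must produce exactly $-b+c+(a+d)\rmi$ after conjugation. Since the full computation is carried out in \cite[Proposition 14.5]{KnLi2006} (and also in \cite{RR}), the argument essentially reduces to invoking that result and verifying that its conventions agree with those in Section \ref{subsec-Haarmeasure}.
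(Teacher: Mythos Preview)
The paper does not give a proof of this proposition: it simply states that the computation has been carried out in \cite[Proposition 14.5]{KnLi2006} (and \cite{RR}) and then records the result. Your proposal is therefore not in conflict with anything in the paper; rather, you have sketched the argument behind the cited reference, and you explicitly acknowledge in your final paragraph that the content reduces to invoking that result and checking normalizations. Your outline of the matrix coefficient computation via the weighted Bergman model, the formal degree $d_{2k}=(2k-1)/4\pi$, and the integrability check via the Cartan decomposition is correct and is indeed the approach taken in \cite{KnLi2006}.
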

\subsubsection{The Archimedean place - weight $0$ case}
In this case, $\pi_\infty=\pi(\epsilon_\pi,it_{\pi})$
and we choose the test function $f_{\infty}=f_0\in C_c^\infty(GL_2(\R)^+,Z_{\infty}K_\infty)$,
the space of  smooth functions on $GL_2(\R)^+$, compactly supported modulo $Z_{\infty}$ and bi-$Z_{\infty}K_\infty$-invariant.

Let $\phi_0$ be a non-zero vector of weight $0$ in $\pi(\epsilon_{\pi},it_{\pi})$.
We have the following result (see \cite[Proposition 3.6]{KnLi2013}).
\begin{prop}
For $f_0\in C_c^\infty(GL_2(\R)^+,Z_\infty K_\infty)$,
the action $\pi_{\epsilon_\pi,it_{\pi}}(f_0)$ on  $\phi_0$ is a scalar given by
\bna
\mathcal S(f_0)(it_\pi):=\int_{0}^\infty
\left[y^{-1/2}
\int_{-\infty}^\infty f_0\left(\bma 1&x\\&1\ema\bma y^{1/2}&\\&y^{-1/2}\ema\right)
dx\right] y^{it_{\pi}}\frac{dy}{y},
\ena
where $\mathcal S(f_0)$ is called the spherical transform of $f_0$. Moreover,
the spherical transform $\mathcal S$ defines a map
\bna
\mathcal S: C_c^\infty(GL_2(\R)^+,Z_\infty K_\infty)\rightarrow PW^\infty(\C)^{\mathrm{even}},\quad f_0\mapsto \mathcal S(f_0)
\ena
which is an isomorphism to the Paley-Wiener space of even functions.
\end{prop}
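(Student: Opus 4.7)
The plan is to prove the two assertions in turn: first that $\pi(\epsilon_\pi,it_\pi)(f_0)$ acts on the $K_\infty$-fixed vector $\phi_0$ as the stated scalar, and second that $\mathcal{S}$ is a topological isomorphism onto $PW^\infty(\C)^{\mathrm{even}}$. Throughout I realize $\pi(\epsilon_\pi,it_\pi)$ as the normalized induced representation acting by right translation on functions $\phi\colon G(\R)\to\C$ satisfying $\phi(bg)=\delta^{1/2}(b)\chi_{\epsilon_\pi,it_\pi}(b)\phi(g)$ for $b\in B(\R)$, and I abbreviate $n(x)=\bma 1&x\\&1\ema$ and $a(y)=\bma y^{1/2}&\\&y^{-1/2}\ema$.

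For the scalar, I first note that $\pi(\epsilon_\pi,it_\pi)$ has a one-dimensional space of $K_\infty$-fixed vectors, spanned by $\phi_0$. Since $f_0$ is right $K_\infty$-invariant, $\pi(\epsilon_\pi,it_\pi)(f_0)$ preserves this line, so $\pi(\epsilon_\pi,it_\pi)(f_0)\phi_0=\lambda\phi_0$ for some scalar $\lambda$. Evaluating at the identity and normalizing $\phi_0\equiv 1$ on $K_\infty$ gives $\lambda=\int_{\overline G_\infty}f_0(g)\phi_0(g)\,dg$. Using the Iwasawa decomposition $g=n(x)a(y)\kappa$ with quotient measure $dx\,dy/y^2\,d\kappa$ modulo $Z_\infty$, the transformation law $\phi_0(n(x)a(y)\kappa)=y^{1/2+it_\pi}$, and the fact that right $K_\infty$-invariance of $f_0$ trivializes the $\kappa$-integral, I obtain
\[
\lambda=\int_0^\infty\int_{-\infty}^{\infty}f_0(n(x)a(y))\,y^{-3/2+it_\pi}\,dx\,dy=\int_0^\infty\!\Big[y^{-1/2}\!\int_{-\infty}^{\infty}f_0(n(x)a(y))\,dx\Big]y^{it_\pi}\frac{dy}{y},
\]
which is exactly the claimed formula for $\mathcal{S}(f_0)(it_\pi)$.

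For the Paley-Wiener part, I factor $\mathcal{S}$ through the Abel transform: put $F(y):=y^{-1/2}\int_{-\infty}^{\infty}f_0(n(x)a(y))\,dx$, so that $\mathcal{S}(f_0)(it)=\int_0^\infty F(y)y^{it}\,dy/y$ is the Mellin transform of $F$, equivalently the Fourier transform of $u\mapsto F(e^u)$ in $u=\log y$. Compact support of $f_0$ modulo $Z_\infty K_\infty$ translates via the Iwasawa/Cartan geometry of $\overline G_\infty$ into compact support of $F(e^u)$ in $u$, so the classical one-dimensional Paley-Wiener theorem places $\mathcal{S}(f_0)$ in $PW^\infty(\C)$. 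Evenness is forced by the standard equivalence of representations $\pi(\epsilon_\pi,it_\pi)\simeq\pi(\epsilon_\pi,-it_\pi)$.

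The main obstacle is surjectivity. Given $h\in PW^\infty(\C)^{\mathrm{even}}$, inverting the Fourier/Mellin transform immediately recovers the candidate $F$, but one must still invert the Abel transform $f_0\mapsto F$ on the space of bi-$Z_\infty K_\infty$-invariant compactly supported smooth functions, and check that the $f_0$ so produced is smooth and compactly supported. This is accomplished by the dual Abel (fractional integration) operator and constitutes the classical content of Helgason's Paley-Wiener theorem for the rank-one symmetric space $\overline G_\infty/K_\infty$, which is the input cited from \cite[Proposition 3.6]{KnLi2013}.
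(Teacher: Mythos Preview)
The paper does not give its own proof of this proposition; it simply records the statement and cites \cite[Proposition 3.6]{KnLi2013}. Your sketch therefore goes beyond what the paper does, and it is essentially correct.

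A couple of small remarks. First, in the line ``since $f_0$ is right $K_\infty$-invariant, $\pi(\epsilon_\pi,it_\pi)(f_0)$ preserves this line,'' the property you actually use to conclude that $\pi(f_0)\phi_0$ is $K_\infty$-fixed is \emph{left} $K_\infty$-invariance of $f_0$ (compute $\pi(k)\pi(f_0)\phi_0$ by the change of variable $g\mapsto k^{-1}g$); since $f_0$ is bi-$K_\infty$-invariant this is harmless, but the side matters. Second, your justification of evenness via the equivalence $\pi(\epsilon_\pi,it_\pi)\simeq\pi(\epsilon_\pi,-it_\pi)$ is clean; one can alternatively check directly that the Abel transform $F$ satisfies $F(y)=F(y^{-1})$ using bi-$K_\infty$-invariance, which gives the same conclusion at the level of the Mellin transform. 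Finally, you are right that the substantive content --- surjectivity of $\mathcal S$, i.e.\ inversion of the Abel transform with control of smoothness and support --- is precisely Helgason's Paley--Wiener theorem for the rank-one symmetric space, and this is exactly what the reference \cite{KnLi2013} supplies; the paper does not attempt an independent argument here either.
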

\subsubsection{The global test functions}
For the local test functions as above, we let
\bna
f_M&=&\prod_{v\nmid N}f_v\prod_{v\mid N}f_{v,M},
\qquad \tilde f_M=\prod_{v\nmid N}f_{v}\prod_{v\mid N}\tilde f_{v,M}.
\ena
Then $R(f_M)$ and $R(\tilde f_M)$ are of the trace class and
\bea
K_{f_M}(x,y)&=&
\left\{
\begin{aligned}
	&\sum_{\pi\in\mathcal A(2 k, N^3,M)}\frac{\phi_\pi(x)\overline{\phi_\pi(y)}}{\langle\phi_\pi,\phi_\pi\rangle}, \quad&&\textrm{ if }k>1,\\
	&\sum_{\pi\in\mathcal A(0,N^3,M)}
\mathcal S(f_0)(it_{\pi})\frac{\phi_\pi(x)\overline{\phi_\pi(y)}}{\langle\phi_\pi,\phi_\pi\rangle}, \quad&& \textrm{ if }k=0,
\end{aligned}
\right.
\label{formula-spectra-decomp-kernel}\\
K_{\tilde f_M}(x,y)&=&
\left\{
\begin{aligned}
	&\sum_{\pi\in\mathcal A(2 k, N^3,M)}\epsilon_\f(\pi)\frac{\phi_\pi(x)\overline{\phi_\pi(y)}}{\langle\phi_\pi,\phi_\pi\rangle}, \quad&&\text{ if }k>1,\\
	&\sum_{\pi\in\mathcal A(0,N^3,M)}
\mathcal S(f_0)(it_{\pi})\epsilon_\f(\pi)\frac{\phi_\pi(x)\overline{\phi_\pi(y)}}{\langle\phi_\pi,\phi_\pi\rangle}, \quad&& \text{ if }k=0.
\end{aligned}
\right.
\label{formula-spectra-decomp-kernel-2}
\eea
Here  $\epsilon_\f(\pi)=\prod_{p\mid N}\zeta_p$,
and
$\phi_\pi=\phi_{\infty}\times\prod_{p<\infty}\phi_p\in L^2_{\pi}(\overline G(\Q)\backslash \overline G{(\A}))$,
where $\phi_p$ is a local new vector if $p\mid N$, $\phi_p$ is unramified if $p\nmid N$, and $\phi_\infty$ is a lowest weight vector in $\pi_\infty$.

For $\phi_\pi$ as above,
by the strong approximate theorem (see \cite[Theorem 3.3.1]{B}),
$\phi_\pi$ is identified to a holomorphic Hecke cusp newform of weight $2k$ and level $N^3$ with trivial nebentypus if $\pi\in\mathcal A(2k,N^3,M)$,
and to a Hecke-Maass cusp newform of level $N^3$ with the eigenvalue $\frac{1}{4}+t_{\pi}^2$ of the Laplace operator
 and with trivial nebentypus  if $\pi\in\mathcal A(0,N^3,M)$.
Moreover, one has
\bna
\phi_\pi(g)=\sum_{\alpha\in \Q^\times}W_\phi\left(\bma \alpha\\&1\ema g\right),
\ena
where
\bna
W_\phi(g)=\int_{\A/\Q}\phi_{\pi}\left(\bma 1&x\\&1\ema g\right)\overline{\psi(x)}dx
\ena
is the Jacquet-Whittaker function of $\phi_\pi$ associated to $\psi$.
We choose $\phi_\pi$ such that
\bna
W_\phi=W_\infty\times\prod_{p<\infty} W_p,
\ena where
$W_p$ are local Whittaker newforms and $W_\infty$ is a lowest weight Whittaker function.

\subsection{Local Whittaker newforms}\label{sect-local-whittaker-newforms}

The local Whittaker newform for  $\pi_v$ of $G_v$
is a  function in the Whittaker model whose Mellin transform equals
the local $L$-function. We choose $W_p$ and $W_\infty$
in the following subsections.

\subsubsection{Non-Archimedean places}
Let $\mathcal W(\pi_p,\psi_p)$ be the Whittaker model of $\pi_p$.
Let $c_p$ be the smallest non-negative integer such that
\bna
\dim \mathcal W(\pi_p,\psi_p)^{K_p(c_p)}=1,
\ena
where $K_p(n)$ is defined in (\ref{K-p-n}).

A Whittaker newform $W_p$ is the function in
$\mathcal W(\pi_p,\psi_p)^{K_p(c_p)}$
so that the zeta integral
\bna
Z_p\left(s, W_p,g,\psi_p\right)=\int_{\Q_p^\times}W_p\left(\bma a\\&1\ema g\right)|a|_p^{s-\frac{1}{2}}d^\times a
\ena
satisfies
\bna
Z_p(s,W_p,I_2,\psi_p)=L_p(s,\pi_p).
\ena
The values of $W_p$ on diagonals are given in the following proposition (see  \cite[Corollary 1]{Popa}).
\begin{prop}\label{prop-local-Whittaker-new-form-non-Arch}
    Let $\{\alpha_{p,1},\alpha_{p,2}\}$ be a set of complex numbers such that
    $$
    L(s,\pi_p)=\prod_{i=1}^2(1-\alpha_{p,i}p^{-s})^{-1}.
    $$
For $a\in \Q_p^\times$, $W_p\bma a&\\&1\ema$
     depends only on $|a|_p$ and
    \bna
    W_p\bma p^{n}&\\&1\ema=
    \begin{cases}
        0,&\text{ if } n< 0,\\
        p^{-\frac{n}{2}}\lambda_{\pi_p}(p^n), &\mbox{ otherwise,}
    \end{cases}
    \ena
    where  $\lambda_{\pi_p}(p^n)=\sum_{l_1+l_2=n}
    \alpha_{p,1}^{l_1}\alpha_{p,2}^{l_2}$ and $0^0=1$ in case one or both of $\alpha_1,\alpha_2$ is 0.
\end{prop}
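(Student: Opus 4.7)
The plan is to set $\phi(a):=W_p\bma a&\\&1\ema$ and to reduce the proposition to three claims: (i) $\phi(a)$ depends on $a$ only through $|a|_p$; (ii) $\phi(a)=0$ whenever $v_p(a)<0$; and (iii) $\phi(p^n)=p^{-n/2}\lambda_{\pi_p}(p^n)$ for $n\geq 0$. I would follow the standard Kirillov-model analysis for new vectors on $GL(2)$; compare \cite{Popa}.

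For (i), the idea is to exploit that $\bma u^{-1}&\\&1\ema$ has lower row $(0,1)$ and therefore lies in $K_p(c_p)$ for every $u\in\Z_p^\times$, hence fixes $W_p$. Combining this with the trivial central character of $\pi_p$ via $\bma 1&\\&u\ema=\bma u&\\&u\ema\bma u^{-1}&\\&1\ema$ gives $\pi_p\bma 1&\\&u\ema W_p=W_p$, and the same central-character trick applied to $\bma a&\\&u\ema=\bma u&\\&u\ema\bma au^{-1}&\\&1\ema$ then yields $\phi(au^{-1})=\phi(a)$, so $\phi$ is $\Z_p^\times$-invariant.

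For (ii), the matrix $\bma 1&z\\&1\ema$ also has lower row $(0,1)$ and so lies in $K_p(c_p)$ for every $z\in\Z_p$. Writing $\bma a&az\\&1\ema$ both as $\bma a&\\&1\ema\bma 1&z\\&1\ema$ and as $\bma 1&az\\&1\ema\bma a&\\&1\ema$, the $(N,\psi_p)$-equivariance of $W_p$ would give
\[
\phi(a)=\psi_p(az)\,\phi(a)\qquad(z\in\Z_p),
\]
so $\phi(a)\neq 0$ forces $\psi_p$ to be trivial on $a\Z_p=p^{v_p(a)}\Z_p$; since $\psi_p$ has conductor $\Z_p$ by \eqref{additive-character}, this is equivalent to $v_p(a)\geq 0$.

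For (iii), items (i) and (ii) would reduce the zeta integral to a power series in $p^{-s}$:
\[
Z_p(s,W_p,I_2,\psi_p)=\int_{\Q_p^\times}\phi(a)|a|_p^{s-1/2}\,d^\times a=\sum_{n\geq 0}\phi(p^n)\,p^{-n(s-\frac{1}{2})},
\]
using $\vol(p^n\Z_p^\times,d^\times a)=1$. The defining normalization $Z_p(s,W_p,I_2,\psi_p)=L_p(s,\pi_p)$ then equates this with $\sum_{n\geq 0}\lambda_{\pi_p}(p^n)p^{-ns}$, and matching coefficients yields $\phi(p^n)=p^{-n/2}\lambda_{\pi_p}(p^n)$; the convention $0^0=1$ absorbs the special and supercuspidal cases where one or both of the $\alpha_{p,i}$ vanish. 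The only external input is the existence and uniqueness of $W_p$ together with its Mellin normalization, i.e.\ the Jacquet--Piatetski-Shapiro--Shalika newform theory, which is assumed here; once this is in hand, no further analytic obstacle arises.
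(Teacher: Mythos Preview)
Your proof is correct and follows the standard Kirillov-model argument for newforms: right $K_p(c_p)$-invariance gives $\Z_p^\times$-invariance of $\phi$, the Whittaker transformation law together with the conductor of $\psi_p$ forces vanishing for $v_p(a)<0$, and then matching the resulting power series against $L_p(s,\pi_p)$ yields the stated values. The paper itself does not supply a proof of this proposition at all---it simply cites \cite[Corollary 1]{Popa}---so your write-up is in fact more detailed than what the paper provides, and is essentially the argument one expects to find in the cited reference.

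One minor simplification: in step (i) you can bypass the central-character detour entirely by observing that $\left(\begin{smallmatrix} u&\\&1\end{smallmatrix}\right)$ already has lower row $(0,1)$ and hence lies in $K_p(c_p)$, so $\phi(au)=W_p\!\left(\left(\begin{smallmatrix} a&\\&1\end{smallmatrix}\right)\left(\begin{smallmatrix} u&\\&1\end{smallmatrix}\right)\right)=\phi(a)$ directly.
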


\subsubsection{The Archimedean place - weight $2k$ case}
Let $\mathcal W(\pi_{2k},\psi_\infty)$ be the Whittaker
model of $\pi_{2k}$ and let
\bna
\mathcal W_{2k}=\left\{W\in\mathcal W(\pi_{2k},\psi_\infty):\quad
R(\kappa_\theta)W=e^{i{2k}\theta}W,\kappa_\theta=\bma\cos\theta&\sin\theta\\-\sin\theta&\cos\theta\ema \in SO(2)\right\}
\ena
be the subspace of weight $2k$.
We choose $W_{2k}\in\mathcal W_{2k}$ as in \cite[(4.7)]{Zhang}, whose values
on diagonals
are given by
\bea
W_{2k}\bma a\\&1\ema=\left\{
\begin{aligned}
    &2a^{k}e^{-2\pi a},\quad &&\textrm{if\ }a>0,\\
    &0,\quad &&\textrm{if\ }a<0.
\end{aligned}
\right.\label{formula-local-Whittaker-Arch}
\eea
It is also a Whittaker newform in  the sense that
\bna
Z_\infty(s,W_{2k},I_2,\psi_\infty)
=\Gamma_\R\left(s+\frac{2k-1}{2}\right)\Gamma_\R\left(s+\frac{2k+1}{2}\right)
\ena
where $\Gamma_\R(s)=\pi^{-\frac{s}{2}}\Gamma(s/2)$.

\subsubsection{The Archimedean place - weight $0$ case}
Let $\mathcal W(\pi_\infty,\psi_\infty)$ be the Whittaker model of  $\pi_\infty=\pi(\epsilon_{\pi},it_{\pi})$.
We choose the Whittaker function $W_{\epsilon_\pi,0}$ of weight $0$ as in \cite{Zhang}, whose values on diagonals are given by
\bea
W_{\epsilon_\pi,0} \bma a\\&1\ema=2
\mathrm{sgn}(a)^{\epsilon_\pi}|a|^{1/2}K_{it_{\pi}}(2\pi |a|),
\label{Whittaker-Arch}
\eea
where
\bna
K_{u}(y)=\frac{1}{2}\int_{0}^\infty e^{-\frac{y}{2}(t+t^{-1})}t^{u}\frac{dt}{t},\quad \re(y)>0
\ena
is the $K$-Bessel function. In this sense,
\bna
Z_\infty(s,W_{\epsilon_\pi,0},I_2,\psi_\infty)
=\left\{
\begin{aligned}
&\Gamma_\R(s+it_{\pi})\Gamma_\R(s-it_{\pi}),\quad &&\textrm{if\ }\epsilon_\pi=0,\\
&0,\quad &&\textrm{if\ }\epsilon_\pi=1.
\end{aligned}
\right.
\ena

With respect to the choice of $W=W_\infty\times\prod_p W_p$ as above,
we have the following proposition.
\begin{prop}\label{prop-inner-product}
One has
\bna
    \langle\phi_\pi,\phi_\pi\rangle=
2L_{\mathrm{fin}}(1,\pi,\mathrm{sym}^2)\prod_{p\mid N}\frac{p}{p+1}
    \left\{
    \begin{aligned}
&4^{1-2k}\pi^{-(2k+1)}\Gamma(2k),\quad &&\textrm{if\ }\pi\in\mathcal A(2k,N^3),\\
&\frac{1}{\cosh(\pi t_\pi)},\quad &&\textrm{if\ }\pi\in\mathcal A(0,N^3).
\end{aligned}
\right.
\ena
\end{prop}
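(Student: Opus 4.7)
The strategy is the standard Rankin--Selberg / matrix coefficient factorization of the Petersson norm.  Since $\phi_\pi$ has factorizable Whittaker function $W_\phi=W_\infty\prod_pW_p$ and $\pi$ has trivial central character (so $\tilde\pi=\pi$ and $L(s,\pi\times\pi)=\zeta(s)L(s,\pi,\mathrm{sym}^2)$), unfolding the Rankin--Selberg integral against $\phi_\pi$ produces an identity of the shape
\begin{equation*}
\langle\phi_\pi,\phi_\pi\rangle = C\cdot L_{\mathrm{fin}}(1,\pi,\mathrm{sym}^2)\cdot\prod_v\frac{I_v}{L_v(1,\pi_v,\mathrm{sym}^2)},\qquad
I_v=\int_{\Q_v^\times}\left|W_v\!\begin{pmatrix}a&\\&1\end{pmatrix}\right|^2 d^\times a,
\end{equation*}
with $C$ a harmless constant arising from $\mathrm{Res}_{s=1}\zeta(s)$ divided by $\zeta(2)$ times Tamagawa constants.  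Appendix~\ref{appendix-A} is precisely the rigorous derivation of this identity via the global matrix coefficient; the remaining task is a place-by-place evaluation of each normalized local integral.

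At an unramified prime $p\nmid N$, Proposition~\ref{prop-local-Whittaker-new-form-non-Arch} turns $I_p$ into the Hecke series $\sum_{n\geq 0}p^{-n}|\lambda_{\pi_p}(p^n)|^2$, which collapses by the standard identity to $\zeta_p(2)^{-1}L_p(1,\pi_p\times\tilde\pi_p)$, so the normalized factor $I_p/L_p(1,\pi_p,\mathrm{sym}^2)$ equals $\zeta_p(1)/\zeta_p(2)$ and these contributions reassemble correctly into $L_{\mathrm{fin}}(1,\pi,\mathrm{sym}^2)$ together with the overall constant $C$.  At the archimedean place in the weight $2k$ case, (\ref{formula-local-Whittaker-Arch}) makes $I_\infty$ a Gamma integral, $I_\infty=4\int_0^\infty a^{2k-1}e^{-4\pi a}\,da=4(4\pi)^{-2k}\Gamma(2k)$, and after dividing by the archimedean symmetric-square $L$-factor at $s=1$ one reads off the stated constant $4^{1-2k}\pi^{-(2k+1)}\Gamma(2k)$.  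In the weight $0$ case, (\ref{Whittaker-Arch}) reduces $I_\infty$ to $4\int_0^\infty K_{it_\pi}(2\pi a)^2\,da$, evaluated by the classical Mellin--Barnes identity for a product of two $K$-Bessel functions, and combined with $L_\infty(1,\pi_\infty,\mathrm{sym}^2)^{-1}$ this yields $1/\cosh(\pi t_\pi)$.

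The only genuinely delicate step is the local integral at a ramified prime $p\mid N$.  Here $L_p(1,\pi_p,\mathrm{sym}^2)=1$ and $\pi_p=\pi_{m_p,\zeta_p}$ is simple supercuspidal; no off-the-shelf diagonal formula analogous to Proposition~\ref{prop-local-Whittaker-new-form-non-Arch} is available for $W_p$, so $I_p$ must be accessed indirectly.  The plan is to extract it from the explicit matrix coefficient of $\pi_{m_p,\zeta_p}$ recorded in Proposition~\ref{prop-test-func-mkN}, combined with the formal degree $d_{\pi_{m_p,\zeta_p}}$ and the volume of $K_p(3)$ under the Haar measure fixed in Section~\ref{subsec-Haarmeasure}; this is the step that genuinely uses the simple supercuspidal classification and from which the factor $\frac{p}{p+1}$ emerges.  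This is the main obstacle: it both requires careful normalization of the formal degree, Haar measure, and Whittaker pairing, and is what makes the computation lengthy enough to warrant Appendix~\ref{appendix-A}; once the local factor at $p\mid N$ is in hand, the archimedean and unramified pieces fit together to yield the stated formula.
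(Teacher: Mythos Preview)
Your overall strategy---Rankin--Selberg unfolding against an Eisenstein series to express $\langle\phi_\pi,\phi_\pi\rangle$ as a residue of a product of local integrals---is exactly what the paper does in Appendix~\ref{appendix-A}. But your treatment of the ramified primes $p\mid N$ misidentifies the difficulty and proposes a more complicated route than the paper actually takes.

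You assert that ``no off-the-shelf diagonal formula analogous to Proposition~\ref{prop-local-Whittaker-new-form-non-Arch} is available for $W_p$'' at $p\mid N$. This is incorrect: Proposition~\ref{prop-local-Whittaker-new-form-non-Arch} is stated for \emph{every} non-Archimedean place. For supercuspidal $\pi_p$ one has $L_p(s,\pi_p)=1$, hence $\alpha_{p,1}=\alpha_{p,2}=0$, and the proposition gives $W_p\bigl(\begin{smallmatrix}a&\\&1\end{smallmatrix}\bigr)=\mathbf 1_{\Z_p^\times}(a)$. So the diagonal values are completely explicit, and the torus integral at $p\mid N$ is simply $1$. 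The paper therefore does \emph{not} use the matrix coefficient from Proposition~\ref{prop-test-func-mkN} or the formal degree here at all. Instead it chooses the Schwartz--Bruhat function $\Phi_p=\mathbf 1_{p^3\Z_p\times\Z_p^\times}$; with this choice the $K_p$-integration in the local Rankin--Selberg integral is supported on $\Z_p^\times K_p(3)$, where $W_p$ is fixed, and one obtains $I_p(s,W_p,\Phi_p)=\vol(\Z_p^\times K_p(3))\cdot 1=\frac{1}{p^2(p+1)}$ for all $s$. The factor $\frac{p}{p+1}$ then emerges not from any supercuspidal subtlety but from the elementary ratio of this to $\widehat\Phi_p(0,0)=(p-1)/p^4$ in the residue identity $\mathrm{Res}_{s=1}I(s,\phi_\pi,\Phi)=\tfrac12\widehat\Phi(0,0)\langle\phi_\pi,\phi_\pi\rangle$.

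A smaller point: your archimedean bookkeeping (``after dividing by the archimedean symmetric-square $L$-factor'') also does not match the paper. In the paper's setup the constants $4^{1-2k}\pi^{-(2k+1)}\Gamma(2k)$ and $1/\cosh(\pi t_\pi)$ are simply the values $I_\infty(1,W_\infty,\Phi_\infty)$ for $\Phi_\infty(x,y)=e^{-\pi(x^2+y^2)}$, computed directly from Gamma and $K$-Bessel integrals (the latter via \eqref{temp-produc-of-Kbessel-intgral}); no division by $L_\infty(1,\pi_\infty,\mathrm{sym}^2)$ occurs.
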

A proof for $\pi\in\mathcal A(2k,N)$ in the classical language can be found in \cite[Lemma 2.5]{IwLuoSarnak-lowlying-zeroes}.
To be self-contained, we give a proof of Proposition \ref{prop-inner-product}
in the representation language
in Appendix \ref{appendix-A}.

\section{The Fourier Trace formulas for given cuspidal parameters}\label{sec-3}
\setcounter{equation}{0}
For $1\leq M\leq N$ with $(M,N)=1$, let $\mathcal A(2k,N,M)$ and $\mathcal A(0,N,M)$ be the sets of automorphic cuspidal representations for $PGL_2(\A)$ defined in \eqref{automorphic-rep-for-cuspidal-types}.
Let $\tilde M$ be a natural number with $1\leq \tilde M\leq N$ and satisfy
the congruence conditions
\bea
\tilde M \equiv -M\left(\frac{N}{p}\right)^3\bmod p,\quad \mbox{for each $p\mid N$}.\label{congruence-conditions}
\eea
In this section, following  Knightly-Li \cite{KnLi2010a,KnLi2013}, we prove the following results.

\begin{prop}\label{prop-sPTF-cuspidal}
For $(n_1n_2,N)=1$, we have
\begin{align*}
\sum_{\pi\in\mathcal A({2k, N^3,M})}
\frac{\lambda_\pi(n_1)\lambda_\pi(n_2)}
{L_{\mathrm{fin}}(1,\pi,\mathrm{sym^2})}
=&\delta(n_1,n_2)\frac{(2k-1)N^2}{2\pi^2}\\
&+\frac{(-1)^k(2k-1)}{\pi}
\sum_{c\geq 1}\frac{A_{ N,M}(c)}{c}
J_{2k-1}\left(\frac{4\pi\sqrt{n_1n_2}}{N^2c}\right)
S(n_1,n_2;N^2c)
\end{align*}
and
\begin{align*}
\sum_{\pi\in\mathcal A({2k, N^3,M})}
\epsilon(\pi)\frac{\lambda_\pi(n_1)\lambda_\pi(n_2)}
{L_{\mathrm{fin}}(1,\pi,\mathrm{sym^2})}
=\frac{(2k-1)N^{3/2}}{\pi}
\sum_{c\geq 1\atop{\tilde M c^2\equiv
n_1n_2\bmod N}
}\frac{S(\overline N^3n_1,n_2;c)}{c}
J_{2k-1}\left(\frac{4\pi\sqrt{n_1n_2}}{N^{3/2}c}\right),
\end{align*}
where
\bna
A_{p,M}(c)=\left\{
\begin{aligned}
&e\left( \frac{M}{p}\right), \quad &&\textrm{if\ }p\nmid c\\
&1, \quad&& \textrm{if\ }p\mid c.
\end{aligned}
\right.
\ena
\end{prop}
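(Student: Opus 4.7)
The plan is to derive both identities from the relative trace formula, following the strategy of Knightly--Li \cite{KnLi2010a}. For either test function $f_M$ or $\tilde f_M$ chosen in Section \ref{sec:test}, I would integrate the automorphic kernel twice against the additive character $\psi$ along opposite unipotent variables,
\bna
I(n_1,n_2;f)=\int_{(\Q\backslash\A)^2} K_f\left(\bma 1&x\\&1\ema,\bma 1&y\\&1\ema\right)\overline{\psi(n_1 x)}\psi(n_2 y)\,dx\,dy,
\ena
and equate the spectral and geometric evaluations.

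On the \emph{spectral side}, the expansions \eqref{formula-spectra-decomp-kernel} and \eqref{formula-spectra-decomp-kernel-2} immediately reduce $I$ to a sum over $\pi\in\mathcal A(2k,N^3,M)$ of products of Whittaker-Fourier coefficients of $\phi_\pi$. Factorizing $W=W_\infty\prod_p W_p$, I would evaluate at $\mathrm{diag}(n_i,1)$ using the unramified newform values in Proposition \ref{prop-local-Whittaker-new-form-non-Arch}, the weight-$2k$ formula \eqref{formula-local-Whittaker-Arch}, and divide by the norm provided in Proposition \ref{prop-inner-product}. The relation $W_p(\mathrm{diag}(p^n,1))=p^{-n/2}\lambda_{\pi_p}(p^n)$ and the multiplicativity of $\lambda_\pi$ then assemble into $\lambda_\pi(n_1)\lambda_\pi(n_2)/L_{\mathrm{fin}}(1,\pi,\mathrm{sym}^2)$, with an overall constant produced by the Archimedean Gamma factors and the factors $p/(p+1)$ at primes dividing $N$; for the twisted kernel one gets the same sum weighted by $\epsilon_{\mathrm{fin}}(\pi)$, which under the given Archimedean and unramified normalization agrees with $\epsilon(\pi)$.

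On the \emph{geometric side}, I would use the Bruhat decomposition $G(\Q)=B(\Q)\sqcup B(\Q)wN(\Q)$ with $w=\bma &1\\-1&\ema$ to partition the sum defining $K_f$. The Borel piece, after the double unipotent integration, collapses to the diagonal contribution $\delta(n_1,n_2)$ times a product of local volume factors read off from the test function supports described in Proposition \ref{prop-test-func-mkN} at $p\mid N$, combined with the Archimedean constant coming from Proposition \ref{prop-test-funct-weight-2k-Arch}. The big-cell piece becomes a sum indexed by the lower-left entry $c$ of a product of local orbital integrals: at the unramified finite primes one recovers the standard Kloosterman sum $S(n_1,n_2;c)$, and at the Archimedean place a direct computation with $f_{2k}$ produces the Bessel factor $J_{2k-1}(4\pi\sqrt{n_1n_2}/(N^2 c))$ in the usual way.

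The decisive input, and the step I expect to be the main obstacle, is the local ramified orbital integral at $p\mid N$. Substituting the explicit formula for $f_{p,M}$ from Proposition \ref{prop-test-func-mkN}, the inner sum $\sum_{\ell\in k_{\Q_p}^\times}\tilde\psi(b\ell+m_p c a^{-1}\ell^{-1})$ must be paired against the ambient integrations coming from $\overline{\psi(n_1 x)}\psi(n_2 y)$. When $p\nmid c$, orthogonality of additive characters pins $\ell$ in terms of the Bruhat parameters and produces the phase $e(M/p)$ after using $m_p\equiv M\bmod p$; when $p\mid c$, the $\ell^{-1}$ term vanishes modulo $p$ and the sum collapses to the neutral value $1$. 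This is exactly the definition of $A_{p,M}(c)$, and taking the product over $p\mid N$ yields both $A_{N,M}(c)$ and the Kloosterman modulus $N^2 c$. For the $\tilde f_M$-identity the same mechanism applies, but the support $\left[\begin{smallmatrix}\Z_p&p^{-2}\Z_p^\times\\p\Z_p^\times&\Z_p\end{smallmatrix}\right]$ of $\tilde f_{p,M}$ has asymmetric valuations $(-2,+1)$ instead of the symmetric $(-1,+2)$, so the modulus rescales to $N^{3/2}c$ and the local $\ell$-orthogonality forces the quadratic congruence $\tilde M c^2\equiv n_1n_2\bmod N$ from \eqref{congruence-conditions}. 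Once this ramified computation is carried out with correct tracking of determinant normalizations and $M$-twisted phases, matching the spectral and geometric sides delivers both formulas.
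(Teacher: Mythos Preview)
Your proposal is correct and follows essentially the same route as the paper: set up the Knightly--Li relative trace identity for the kernels $K_{f_M}$ and $K_{\tilde f_M}$, evaluate the spectral side via Propositions \ref{prop-local-Whittaker-new-form-non-Arch}, \eqref{formula-local-Whittaker-Arch}, and \ref{prop-inner-product}, and compute the geometric side by Bruhat decomposition, with the ramified local orbital integrals at $p\mid N$ supplying the factor $A_{p,M}(c)$ (for $f_M$) or the congruence $\tilde M c^2\equiv n_1n_2\bmod N$ (for $\tilde f_M$). The paper implements the Fourier extraction by shifting the kernel arguments by the adeles $\tilde n_i^r$ rather than twisting the characters by $n_i$ as you do, and it organizes the ramified orbital computation as a four-case analysis (Lemma \ref{lemma-orbital-integral-p|N}) according to $v_p(a)$ rather than your $p\nmid c$ versus $p\mid c$ dichotomy, but these are cosmetic differences; note also that for $\tilde f_M$ the diagonal (Borel-cell) term vanishes identically because the support forces the upper-right entry into $p^{-2}\Z_p^\times$, a point you leave implicit.
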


\begin{prop}\label{prop-sKTF-cuspidal}
Let $h(z)$ be an even function such that
h(z) is holomorphic in the region $|\im(z)|<A$ in which it satisfies
\bna
h(z)\ll (1+|z|)^{-B}
\ena
for some positive constant $A$ and sufficiently large positive constant $B$. One has
\bna
\sum_{\pi\in\mathcal A(0,N^3,M)}h(t_\pi)
\frac{\lambda_{\pi}(n_1)\lambda_{\pi}(n_2)}{L_{\mathrm{fin}}(1,\pi,\mathrm{sym^2})}
&=&\delta(n_1,n_2)N^2\frac{1}{2\pi^{2}}\int_{-\infty}^\infty h(t)\tanh(\pi t) tdt\\
&&+i
\sum_{c\geq 1}\frac{A_{N,M}(c)}{c}S(n_1,n_2;N^2c)
\int_{-\infty}^{+\infty}\frac{h(t)t}{\cosh(\pi t)}J_{2it}\left(\frac{4\pi\sqrt{n_1n_2}}{N^2c}\right)dt,
\ena
where $A_{N,M}(c)$ is defined in Proposition \ref{prop-sPTF-cuspidal}.
\end{prop}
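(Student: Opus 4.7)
The plan is to apply the relative trace formula with the global test function $f_M=\prod_{v\nmid N}f_v\prod_{v\mid N}f_{v,M}$ whose Archimedean component is the weight-$0$ function $f_0\in C_c^\infty(GL_2(\R)^+,Z_\infty K_\infty)$ from Section \ref{sec:test}. One considers the double unipotent period
\[
J(f_M;n_1,n_2)=\int_{[U]}\int_{[U]}K_{f_M}\!\left(u_1 a(n_1),\,u_2 a(n_2)\right)\overline{\psi(u_1)}\psi(u_2)\,du_1\,du_2,
\]
where $a(n)=\mathrm{diag}(n,1)$ and $[U]=\Q\backslash\A$, and computes $J(f_M;n_1,n_2)$ first spectrally using \eqref{formula-spectra-decomp-kernel}, then geometrically via the Bruhat decomposition of $G(\Q)$. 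The proof parallels that of Proposition \ref{prop-sPTF-cuspidal}, the main new ingredients being the Archimedean calculations associated with the spherical test function $f_0$.

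For the spectral side, I would unfold the double $U$-integral against $\phi_\pi$ in \eqref{formula-spectra-decomp-kernel} to pick out $W_\phi(a(n_1))\overline{W_\phi(a(n_2))}/\langle\phi_\pi,\phi_\pi\rangle$, multiplied by the scalar $\mathcal S(f_0)(it_\pi)$. The Whittaker factors split: at unramified finite places Proposition \ref{prop-local-Whittaker-new-form-non-Arch} contributes $(n_1n_2)^{-1/2}\lambda_\pi(n_1)\lambda_\pi(n_2)$; at ramified $p\mid N$ the hypothesis $(n_1n_2,N)=1$ reduces $W_p(a(n_i))$ to $W_p(I_2)$, a fixed new-vector normalization; at the Archimedean place the $K$-Bessel values from \eqref{Whittaker-Arch} combine with the inner product formula of Proposition \ref{prop-inner-product} (the $\cosh(\pi t_\pi)$ factor) and with $\mathcal S(f_0)(it_\pi)$ to produce exactly the weighted spectral average $\sum_\pi \mathcal S(f_0)(it_\pi)\,\lambda_\pi(n_1)\lambda_\pi(n_2)/L_{\mathrm{fin}}(1,\pi,\mathrm{sym}^2)$ up to the correct explicit constant involving $\prod_{p\mid N}(p+1)/p$.

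For the geometric side, the Bruhat decomposition splits $K_{f_M}$ into a Borel contribution and a big-cell contribution. The Borel term, after the $U$-integration, forces $n_1=n_2$ and produces the diagonal contribution $\delta(n_1,n_2)N^2\cdot\frac{1}{2\pi^2}\int h(t)\tanh(\pi t)\,t\,dt$, where the Plancherel measure $\tanh(\pi t)t\,dt$ arises from spherical inversion of $\mathcal S(f_0)$ at the identity (this is the standard Harish-Chandra Plancherel formula on $PGL_2(\R)$). The big-cell contribution decomposes as a sum over $c\in\Q^\times$ indexing the lower-left matrix entry, yielding a product of local orbital integrals: at unramified $p$ one recovers the classical Kloosterman piece $S(n_1,n_2;N^2c)/c$ after normalizing $c\geq 1$, while at $p\mid N$ the explicit formula in Proposition \ref{prop-test-func-mkN} evaluates the orbital integral to give the local factor $A_{p,M}(c)$, whose product over $p\mid N$ is $A_{N,M}(c)$.

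The main obstacle is the Archimedean big-cell orbital integral and the passage from the specific spherical transform $\mathcal S(f_0)$ to an arbitrary admissible test function $h$. The computation requires evaluating
\[
\int_\R\int_\R f_0\!\left(w\,n(x)\,a(n_1)^{-1}\,\mathrm{diag}(c,c^{-1})\,n(y)\,a(n_2)\right)\overline{\psi_\infty(x)}\,\psi_\infty(y)\,dx\,dy
\]
and re-expressing it, via spherical inversion together with a Kontorovich–Lebedev-type identity, as the Bessel integral $\int_{-\infty}^{+\infty}\frac{h(t)t}{\cosh(\pi t)}J_{2it}(4\pi\sqrt{n_1n_2}/(N^2c))\,dt$ with $h(t)=\mathcal S(f_0)(it)$. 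Once this identity is established for $f_0\in C_c^\infty$, the Paley–Wiener isomorphism $\mathcal S\colon C_c^\infty(GL_2(\R)^+,Z_\infty K_\infty)\xrightarrow{\sim}PW^\infty(\C)^{\mathrm{even}}$ recalled in Section \ref{sec:test}, together with the rapid decay hypothesis $h(z)\ll(1+|z|)^{-B}$ and the holomorphy of $h$ in $|\mathrm{Im}(z)|<A$, allows extension to the full stated class of $h$ by a standard approximation and dominated-convergence argument, using estimates for $J_{2it}$ and $1/\cosh(\pi t)$ to ensure absolute convergence of the geometric side uniformly along the approximating sequence.
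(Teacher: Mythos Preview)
Your outline follows the right template but misses the essential Archimedean device that the paper (following Knightly--Li) uses. In the weight-$0$ case the relative trace formula at a fixed Archimedean parameter does \emph{not} produce the clean identity of Proposition~\ref{prop-sKTF-cuspidal}. Concretely, on the spectral side the Archimedean Whittaker values from \eqref{Whittaker-Arch} contribute a factor $K_{it_\pi}(2\pi r)\overline{K_{it_\pi}(2\pi r)}$ (see Proposition~\ref{prop:spectral-side-J}), and this does \emph{not} cancel against the $\cosh(\pi t_\pi)$ from Proposition~\ref{prop-inner-product}; you are left with a $t_\pi$-dependent weight that is not the desired $h(t_\pi)$. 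Likewise, the diagonal orbital integral of the first type at fixed $r$ is $\int_\R f_0\bigl(\begin{smallmatrix}1&r^{-1}x\\&1\end{smallmatrix}\bigr)e^{2\pi ix}\,dx$, which is \emph{not} $f_0(I_2)$, so the Harish-Chandra/Plancherel inversion ``at the identity'' does not directly apply to produce $\int h(t)\tanh(\pi t)\,t\,dt$. The same obstruction affects the big-cell term: for fixed $r$ the Archimedean orbital integral \eqref{eq:Arch-2st-0} is not yet a $J_{2it}$-transform of $h$.

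The paper's remedy is to keep a free Archimedean parameter $r>0$ (the element $\tilde n_i^r$ has $\infty$-component $r$), obtain the pre-Kuznetsov identity of Proposition~\ref{pre-Kuznetsov-trace-formula}, and then \emph{integrate both sides over $r\in\R^\times_+$}. On the spectral side the formula $\int_0^\infty K_{it}(2\pi r)\overline{K_{it}(2\pi r)}\,r\,d^\times r=\tfrac{1}{4\cosh(\pi t)}$ removes the Bessel factors; on the diagonal geometric term the $r$-integral collapses to $\tfrac12 V(0)=\tfrac{1}{8\pi}\int h(t)\tanh(\pi t)\,t\,dt$; and on the big-cell term the $r$-integral is exactly what converts the orbital integral into $\int \frac{h(t)t}{\cosh(\pi t)}J_{2it}(\cdot)\,dt$ (this is \cite[Proposition~7.1]{KnLi2013}, not a bare Kontorovich--Lebedev identity). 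Your ``approximation and dominated convergence'' step at the end is fine in spirit, and indeed the paper defers this to \cite[Section~8]{KnLi2013}; the gap is earlier, in the three Archimedean evaluations, all of which require the extra $r$-integration that your outline omits.
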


By \eqref{the-space-of-whole-space}, we can deduce Theorem \ref{thm-sPTF}
(resp. Theorem \ref{thm-sKTF}) from Proposition \ref{prop-sPTF-cuspidal} (resp. Proposition \ref{prop-sKTF-cuspidal}) by summing  over the cuspidal parameters $1\leq M\leq N$ with $(M,N)=1$.

\subsection{The relative trace formula}
Let $n_1$ and $n_2$  be two natural numbers with $(n_1n_2,N)=1$. For any $r>0$,
define  $\tilde n_1^r$ and $\tilde n_2^r$  to be two Adele elements given by
\bna
\tilde n^r_{i,v}= \begin{cases}
     p^{v_p(n_i)}, &\text{ when } v=p,\\
     r, &\text{ when } v=\infty,
\end{cases}\quad \mbox{for\,} i=1,2.
\ena

For $f=f_M$ and $f=\tilde f_M$, we
consider the following integral
\begin{eqnarray*}
J(\tilde n_1^r, \tilde n_2^r,f)
&=&\int_{\A/\Q}\int_{\A/\Q}K_{f}\left(\bma 1&x\\&1\ema \bma \tilde n_1^r&\\&1\ema ,
\bma 1&y\\&1\ema\bma \tilde n_2^r\\&1\ema\right)\overline{\psi(x)}\psi(y)
dxdy.
\end{eqnarray*}
The spectral decompositions  in  \eqref{formula-spectra-decomp-kernel}
and \eqref{formula-spectra-decomp-kernel-2}
lead to the spectral side of $J(\tilde n_1^r,\tilde n_2^r,f)$.

For the geometric sides,
by the Bruhat decomposition,
we have two different types of sets of orbits,
\bna
\left\{\delta_a=\bma a\\&1\ema,\quad a\in \Q^\times\right\}\quad
\mbox{and}\quad
\left\{\delta_aw=\bma&- a\\1\ema,a\in \Q^\times\right\},
\ena
where $w=\bma &-1\\1\ema$ is the non-trivial Weyl's element. It gives the geometric decomposition  as
\bna
J(\tilde n_1^r,\tilde n_2^r,f)=\sum_{a\in \Q^\times}J_{\delta_a}(\tilde n_1^r,\tilde n_2^r,f)
+\sum_{a\in \Q^\times}J_{\delta_aw}(\tilde n_1^r,\tilde n_2^r,f),
\ena
where
\bea
J_{\delta_a}(\tilde n_1^r,\tilde n_2^r,f)=
\int_{\A}\int_{\A/\Q}f_M\left(\bma  (\tilde n_1^r)^{-1}\\&1\ema
\bma a&ay-x\\&1\ema\bma \tilde n_2^r\\&1\ema\right) \overline{\psi(x)} \psi(y)dxdy
\label{orbital-integral-first-type}
\eea
and
\bea
J_{\delta_aw}(\tilde n_1^r,\tilde n_2^r,f)
=\int_{\A}\int_{\A}
f_M\left(\bma  (\tilde n_1^r)^{-1}\\&1\ema\bma-x&-a-xy\\1&y\ema \bma\tilde n_2^r\\&1\ema\right)\overline{\psi(x)}\psi(y)dxdy
\label{orbital-integral-second-type}
\eea
are orbital integrals of the first type and of the second type, respectively.

\subsection{The spectral side}\label{sec-3-1}
By the spectral decompositions \eqref{formula-spectra-decomp-kernel}
and \eqref{formula-spectra-decomp-kernel-2},
we have the spectral side of $J(\tilde n_1^r,\tilde n_2^r,f)$
for $f=f_M$ and $f=\tilde f_M$ in the following proposition.
\begin{prop}\label{prop:spectral-side-J}We have
\begin{align*}
J(\tilde n_1^r,\tilde n_2^r,f_M)&=\prod_{p\mid N}(1+p^{-1})\frac{1}{\sqrt{n_1n_2}}\\
&\times\left\{
\begin{aligned}
&\frac{2^{4k-1}\pi^{2k+1}}{\Gamma(2k)e^{4\pi }}
\sum_{\pi\in\mathcal A({2k, N^3,M})}
\frac{\lambda_\pi(n_1)\lambda_\pi(n_2)}
{L_{\mathrm{fin}}(1,\pi,\mathrm{sym^2})},\quad\qquad\text{if $f_\infty=f_{2k}$ and $r=1$},\\
&
2r\sum_{\pi\in\mathcal A(0,N^3,M)}\mathcal S(f_0)(it_{\pi})
\frac{\lambda_{\pi}(n_1)\lambda_{\pi}(n_2)}{L_{\mathrm{fin}}(1,\pi,\mathrm{sym^2})}
K_{it_{\pi}}(2\pi r)\overline{K_{it_\pi}(2\pi r)}\cosh (\pi t_{\pi})
,\quad\text{if $f_\infty=f_0$}.
\end{aligned}
\right.\\
J(\tilde n_1^r,\tilde n_2^r,\tilde f_M)&=\prod_{p\mid N}(1+p^{-1})\frac{1}{\sqrt{n_1n_2}}\\
&\times\left\{
\begin{aligned}
&\frac{2^{4k-1}\pi^{2k+1}}{\Gamma(2k)e^{4\pi }}
\sum_{\pi\in\mathcal A({2k, N^3,M})}\epsilon_\f(\pi)
\frac{\lambda_\pi(n_1)\lambda_\pi(n_2)}
{L_{\mathrm{fin}}(1,\pi,\mathrm{sym^2})},\quad\qquad\text{if $f_\infty=f_{2k}$ and $r=1$},\\
&
2r\sum_{\pi\in\mathcal A(0,N^3,M)}\mathcal S(f_0)(it_{\pi})\epsilon_\f(\pi)
\frac{\lambda_{\pi}(n_1)\lambda_{\pi}(n_2)}{L_{\mathrm{fin}}(1,\pi,\mathrm{sym^2})}
K_{it_{\pi}}(2\pi r)\overline{K_{it_\pi}(2\pi r)}\cosh (\pi t_{\pi})
,\quad\text{if $f_\infty=f_0$}.
\end{aligned}
\right.
\end{align*}
\end{prop}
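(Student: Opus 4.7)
The plan is to substitute the spectral decompositions \eqref{formula-spectra-decomp-kernel} and \eqref{formula-spectra-decomp-kernel-2} into the definition of $J(\tilde n_1^r,\tilde n_2^r,f)$, interchange the spectral sum with the two integrations over $\A/\Q$, and identify each inner integral with a Jacquet--Whittaker function through the Fourier expansion of a cusp form. Once this is done, the proposition reduces to assembling the local Whittaker values provided by Proposition \ref{prop-local-Whittaker-new-form-non-Arch}, together with \eqref{formula-local-Whittaker-Arch}, \eqref{Whittaker-Arch}, and the inner product formula in Proposition \ref{prop-inner-product}.

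More precisely, the Whittaker--Fourier expansion
$$\phi_\pi\left(\bma 1&x\\&1\ema g\right)=\sum_{\alpha\in\Q^\times}\psi(\alpha x)\,W_\phi\left(\bma \alpha\\&1\ema g\right),$$
combined with $\mathrm{vol}(\A/\Q)=1$ (for our self-dual measure) and the orthogonality of characters, yields
$$\int_{\A/\Q}\phi_\pi\left(\bma 1&x\\&1\ema g\right)\overline{\psi(x)}\,dx=W_\phi(g),\qquad \int_{\A/\Q}\overline{\phi_\pi\left(\bma 1&y\\&1\ema g\right)}\,\psi(y)\,dy=\overline{W_\phi(g)}.$$
Substituting \eqref{formula-spectra-decomp-kernel} and \eqref{formula-spectra-decomp-kernel-2} into $J$ and interchanging the sum with the two compact integrations---legitimate since $R(f_M)$ and $R(\tilde f_M)$ are of trace class---one obtains
$$J(\tilde n_1^r,\tilde n_2^r,f_M)=\sum_\pi \frac{c_\infty(\pi)}{\langle\phi_\pi,\phi_\pi\rangle}\,W_\phi\left(\bma \tilde n_1^r\\&1\ema\right)\overline{W_\phi\left(\bma \tilde n_2^r\\&1\ema\right)},$$
where $c_\infty(\pi)=1$ for $f_\infty=f_{2k}$ and $c_\infty(\pi)=\mathcal S(f_0)(it_\pi)$ for $f_\infty=f_0$; the analogous formula for $J(\tilde n_1^r,\tilde n_2^r,\tilde f_M)$ carries an extra factor $\epsilon_\f(\pi)$.

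Next I would evaluate $W_\phi\left(\bma \tilde n_i^r\\&1\ema\right)=W_\infty\left(\bma r\\&1\ema\right)\prod_p W_p\left(\bma p^{v_p(n_i)}\\&1\ema\right)$ place by place. Proposition \ref{prop-local-Whittaker-new-form-non-Arch} and the multiplicativity of Hecke eigenvalues give $\prod_p W_p\left(\bma p^{v_p(n_i)}\\&1\ema\right)=n_i^{-1/2}\lambda_\pi(n_i)$, where the hypothesis $(n_1n_2,N)=1$ makes each ramified factor equal to $W_p(I_2)=\lambda_{\pi_p}(1)=1$. At the Archimedean place, \eqref{formula-local-Whittaker-Arch} at $r=1$ gives the real value $W_{2k}\left(\bma 1\\&1\ema\right)=2e^{-2\pi}$ with square $4e^{-4\pi}$, while \eqref{Whittaker-Arch} gives $W_{\epsilon_\pi,0}\left(\bma r\\&1\ema\right)\overline{W_{\epsilon_\pi,0}\left(\bma r\\&1\ema\right)}=4r\,K_{it_\pi}(2\pi r)\overline{K_{it_\pi}(2\pi r)}$.

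Finally I would insert $\langle\phi_\pi,\phi_\pi\rangle$ from Proposition \ref{prop-inner-product} and simplify. The factor $\prod_{p\mid N}p/(p+1)$ inverts to $\prod_{p\mid N}(1+p^{-1})$, the $2L_{\mathrm{fin}}(1,\pi,\mathrm{sym}^2)$ in the denominator produces the claimed Dirichlet side, and the residual Archimedean constants combine to $\tfrac{2^{4k-1}\pi^{2k+1}}{\Gamma(2k)e^{4\pi}}$ in the weight-$2k$ case (from $4e^{-4\pi}\cdot 4^{2k-1}\pi^{2k+1}/(2\Gamma(2k))$) and to $2r\cosh(\pi t_\pi)K_{it_\pi}(2\pi r)\overline{K_{it_\pi}(2\pi r)}$ in the weight-$0$ case (from $4r\cosh(\pi t_\pi)/2$). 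The only non-mechanical point is the bookkeeping of the Archimedean constants---Gamma factors, powers of $2$ and $\pi$, and the exponential $e^{-2\pi}$---when combining the squared Whittaker values with the reciprocal of the inner product; this is the main, though still routine, obstacle.
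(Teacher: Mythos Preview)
Your proposal is correct and follows essentially the same approach as the paper's own proof: both substitute the spectral decompositions \eqref{formula-spectra-decomp-kernel}--\eqref{formula-spectra-decomp-kernel-2} into $J$, identify the unipotent integrals with Whittaker values, evaluate these via Proposition \ref{prop-local-Whittaker-new-form-non-Arch} and \eqref{formula-local-Whittaker-Arch}/\eqref{Whittaker-Arch}, and divide by the inner product from Proposition \ref{prop-inner-product}. If anything, you spell out the Fourier-expansion and interchange-of-summation steps more explicitly than the paper, which simply writes down the Whittaker expression directly; your constant bookkeeping also matches.
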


\begin{proof}
Consider the case $f_M=f_{2k}\times\prod_{p}f_{p,M}$. We have
\bna
J(\tilde n_1^r,\tilde n_2^r,f_M)=\sum_{\pi\in\mathcal A({2k, N^3,M})}\frac{W_\phi\bma \tilde n_1^r\\&1\ema \overline{W_\phi\bma \tilde n_2^r\\&1\ema}}{\langle\phi_\pi,\phi_\pi\rangle},
\ena
where $W_\phi=W_{2k}\times\prod_{p<\infty} W_p$.
By Proposition \ref{prop-local-Whittaker-new-form-non-Arch} and  \eqref{formula-local-Whittaker-Arch},
\bna
W_\phi\bma\tilde n_i^r\\&1\ema=\frac{2r^ke^{-2\pi r}}{\sqrt{n_i}}\lambda_\pi(n_i).
\ena
Note that $\lambda_{\pi}(n)$ is real.
By Proposition \ref{prop-inner-product} we obtain
\bna
J(\tilde n_1^r,\tilde n_2^r,f_M)
=\frac{2^{4k-1}\pi^{2k+1}\prod_{p\mid N}\left(1+p^{-1}\right)}{\Gamma(2k)\sqrt{n_1n_2}}\frac{r^{2k}}{e^{4\pi r}}
\sum_{\pi\in\mathcal A({2k, N^3,M})}
\frac{\lambda_\pi(n_1)\lambda_\pi(n_2)}
{L_{\mathrm{fin}}(1,\pi,\mathrm{sym^2})}.
\ena

Consider the case $f_M=f_{0}\times\prod_{p<\infty} f_{p,M}$, where $f_0\in C_c^\infty(GL(2,\R)^+,Z_\infty K_\infty)$.
By \eqref{formula-spectra-decomp-kernel} we have
\bna
J(\tilde n_1^r, \tilde n_2^r,f_M)=\sum_{\pi\in\mathcal A(0,N^3,M)}
\mathcal S(f_0)(it_{\pi})\frac{W_\phi\bma \tilde n_1^r\\&1\ema \overline{W_\phi\bma \tilde n_2^r\\&1\ema}}{\langle\phi_\pi,\phi_\pi\rangle},
\ena
where $W_\phi=W_{\epsilon_{\pi},0}\times\prod_{p<\infty}W_p$.
By \eqref{Whittaker-Arch}
and Proposition \ref{prop-local-Whittaker-new-form-non-Arch},
\bna
W_\phi\bma\tilde n_i^r\\&1\ema=\frac{2r^{1/2}K_{it_{\pi}}(2\pi r)}{\sqrt{n_i}}\lambda_\pi(n_i).
\ena
This together with Proposition \ref{prop-inner-product} gives
\bna
\begin{aligned}
J(\tilde n_1^r,\tilde n_2^r,f_M)
=&2r\prod_{p\mid N}(1+p^{-1})\frac{1}{\sqrt{n_1n_2}}
\sum_{\pi\in\mathcal A(0,N^3,M)}\mathcal S(f_0)(it_{\pi})
\frac{\lambda_{\pi}(n_1)\lambda_{\pi}(n_2)}{L_{\mathrm{fin}}(1,\pi,\mathrm{sym^2})}\\
&\qquad\qquad
\times K_{it_{\pi}}(2\pi r)\overline{K_{it_\pi}(2\pi r)}\cosh (\pi t_{\pi}).
\end{aligned}
\ena
\end{proof}
\subsection{Orbital integrals of the first type}\label{sec:orbital-1-type}
For $f=f_M$ and $f=\tilde f_M$,
the orbital integral $J_{\delta_a}(\tilde n_1^r,\tilde n_2^r,f)$ in (\ref{orbital-integral-first-type}) is
\bna
J_{\delta_a}(\tilde n_1^r,\tilde n_2^r,f)
=\int_{\A}f\left(\bma  \tilde n_1^r\\&1\ema^{-1}
\bma a&x\\&1\ema\bma \tilde n_2^r\\&1\ema\right)\psi(x) dx
\int_{\A/\Q}\overline{\psi((a-1)y)}dy.
\ena
It vanishes except for the case $a=1$,
in which we have
\bna
J_{I_2}(\tilde n_1^r,\tilde n_2^r,f)=\prod_v J_{I_2,v}(\tilde n_{1,v},\tilde n_{2,v},f_{v}),
\ena
where
\bna
J_{I_2,v}(\tilde n_{1,v}^r,\tilde n_{2,v}^r,f_{v})=
\int_{\Q_v}f_{v}\left(\bma \tilde n_{1,v}^r\\&1\ema^{-1}
\bma 1&x\\&1\ema\bma \tilde n_{2,v}^r\\&1\ema\right)\psi_v(x) dx.
\ena
We evaluate the local orbital integrals as follows.
\bit
\item
For the case $v=\infty$ and $f_{\infty}=f_{2k}$, we assume $r=1$. By \cite[Proposition 3.4]{KnLi2010a} we obtain
\bna
J_{I_2,\infty}(1,1,f_{2k})
=\frac{(4\pi)^{2k-1}}{(2k-2)!}e^{-4\pi}.
\ena
\item For the case $v=\infty$ and $f_{\infty}=f_0$,
\bna
J_{I_2,\infty}(r,r,f_0)=\int_{\R}f_0\bma 1&r^{-1}x\\&1\ema e^{2\pi i x}dx.
\ena
\item
For $v=p$ with $p\mid N$, by Proposition \ref{prop-test-func-mkN},
\begin{eqnarray*}
J_{I_2,p}(1,1,f_{p,M})&=&(p+1)(p-1) -(p+1)\int_{p^{-1}\Z_p-\Z_p}\psi_p(x)dx\nonumber\\
&=&p(p+1),\\
J_{I_2,p}(1,1,\tilde f_{p,M})&=&0.
\end{eqnarray*}
\item
For $v=p$ with $p\nmid N$, $f_{p}=\mathbf 1_{Z_pK_p}$ and
\bna
\begin{aligned}
J_{I_2,p}(p^{v_p(n_1)}, p^{v_p(n_2)},\mathbf 1_{Z_pK_p})
&=\int_{\Q_p}\mathbf 1_{Z_pK_p}\bma p^{v_p(n_2)-v_p(n_1)}&p^{-v_p(n_1)}x\\&1\ema\psi_p(x)dx\\
&=\delta(v_p(n_1),v_p(n_2))\int_{|x|_p\leq |n_1|_p}dx\\
&=\delta(v_p(n_1),v_p(n_2))|n_1|_p.
\end{aligned}
\ena
\eit
Therefore we have the following result.
\begin{prop}\label{prop:orbital-intergral-1st}
We have
\bna
\sum_{a\in \Q^\times}J_{\delta_a}(\tilde n_1^r,\tilde n_2^r,f_M)
&=&\delta(n_1,n_2)\frac{1}{\sqrt{n_1n_2}}
N\prod_{p\mid N}(p+1)\\
&&
\times\left\{
\begin{aligned}
&\frac{(4\pi )^{2k-1} }{(2k-2)!}e^{-4\pi},\quad&&\mbox{if $f_\infty=f_{2k}$ and $r=1$,}\\
&\int_{\R}f_0\bma 1&r^{-1}x\\&1\ema e^{2\pi i x}dx,\quad&&\mbox{if $f_\infty=f_0$,}
\end{aligned}
\right.
\ena
and
\bna
\sum_{a\in \Q^\times}J_{\delta_a}(\tilde n_1^r,\tilde n_2^r,\tilde f_M)=0.
\ena
\end{prop}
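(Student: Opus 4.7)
The plan is to finish the assembly of the local computations already carried out above the proposition. First I would observe that the inner integral over $y\in\A/\Q$ in \eqref{orbital-integral-first-type} equals $\int_{\A/\Q}\overline{\psi((a-1)y)}\,dy$, which vanishes unless $a=1$ since $\psi$ is a nontrivial character of $\A/\Q$. Hence only the orbit $a=1$ survives, and $J_{I_2}(\tilde n_1^r,\tilde n_2^r,f)$ factors as the product $\prod_v J_{I_2,v}(\tilde n_{1,v}^r,\tilde n_{2,v}^r,f_v)$ over all places of $\Q$.

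Next I would multiply the local contributions tabulated in the bullet list preceding the statement. At $p\mid N$ we collect $\prod_{p\mid N} p(p+1)=N\prod_{p\mid N}(p+1)$ in the $f_M$ case, while in the $\tilde f_M$ case the vanishing $J_{I_2,p}(1,1,\tilde f_{p,M})=0$ at any single $p\mid N$ forces the whole product to be zero, giving the second identity of the proposition immediately. The Archimedean factor is recorded verbatim according to whether $f_\infty=f_{2k}$ (with $r=1$) or $f_\infty=f_0$.

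The one step that actually requires a brief argument is the consolidation of the unramified primes. The product $\prod_{p\nmid N}\delta(v_p(n_1),v_p(n_2))$ is nonzero precisely when $v_p(n_1)=v_p(n_2)$ for every such $p$; combined with the hypothesis $(n_1n_2,N)=1$, which already forces $v_p(n_1)=v_p(n_2)=0$ at $p\mid N$, this is equivalent to the global equality $n_1=n_2$. Under that equality, using $|n_1|_p=1$ for $p\mid N$ together with the adelic product formula $\prod_v|n_1|_v=1$ gives
\[
\prod_{p\nmid N}|n_1|_p=\prod_p|n_1|_p=|n_1|_\infty^{-1}=\frac{1}{n_1}=\frac{1}{\sqrt{n_1n_2}}.
\]
Multiplying the ramified, unramified, and Archimedean pieces together yields the first identity as stated.

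The main obstacle is purely bookkeeping rather than conceptual: once the local orbital integrals from the list above are granted, the only nontrivial checks are the vanishing of the $y$-integral off the diagonal orbit $a=1$ and the emergence of the $1/\sqrt{n_1n_2}$ normalization via the product formula, both of which are routine.
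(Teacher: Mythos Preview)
Your proposal is correct and follows exactly the paper's approach: the paper likewise reduces to $a=1$ via the character integral, factors into the local integrals already listed, and then simply multiplies them. Your explicit use of the product formula to extract $1/\sqrt{n_1n_2}$ from $\prod_{p\nmid N}|n_1|_p$ is the one detail the paper leaves implicit, and you have it right.
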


\subsection{Orbital integrals of the second type}\label{sec:orbital-2-type}
For $J_{\delta_a w}$
in (\ref{orbital-integral-second-type}), it splits into product
of local orbital integrals $J_{\delta_aw}=\prod_v J_{\delta_aw,v}$,
where
\bna
J_{\delta_aw,v}(\tilde n_{1,v}^r,\tilde n_{2,v}^r,f_{v})=\int_{\Q_v}\int_{\Q_v}
f_{v}\left(\bma\tilde n_{1,v}^r\\&1\ema^{-1}\bma-x&-a-xy\\1&y\ema
\bma\tilde n_{2,v}^r\\&1\ema
 \right)\overline{\psi_v(x)}\psi_v(y)dxdy.
\ena
We compute $J_{\delta_{aw},v}$ as follows.
\subsubsection{Archimedean places}
For $f_{\infty}=f_{2k}$,
by \cite[Proposition 3.6]{KnLi2010a} we have
\bea
J_{\delta_aw,\infty}(1,1,f_{2k})=
\left\{
\begin{aligned}
&\frac{e^{-4\pi}(4\pi i)^{2k}}{2(2k-2)!}\sqrt{a}J_{2k-1}(4\pi\sqrt{a}),
\quad&&\text{if}\,a>0,\\
&0,\quad&&\text{if}\,a<0,
\end{aligned}
\right.\label{eq:Arch-2st-2k}
\eea
where $J_{n}(z)$ is the $J$-Bessel function.

For $f_{\infty}=f_0\in C_c^\infty(GL_2(\R)^+,Z_{\infty}K_\infty)$, we have
\bea
J_{\delta_a w,\infty}(r,r;f_0)
=\int_{\R}\int_{\R}f_0
\bma-x&-r^{-1}(a+xy)\\
r&y\ema e^{2\pi i(y-x)}dxdy\label{eq:Arch-2st-0}
\eea
which is non-vanishing if and only if $a>0$.
\subsubsection{Non-Archimedean places $p\mid N$}
For $v=p$ with $p\mid N$, it is enough to evaluate $J_{\delta_aw,p}(1,1,f_{p,M})$
and $J_{\delta_aw,p}(1,1,\tilde f_{p,M})$
 since $(n_1n_2,N)=1$.
We summarize the computation in the following lemma.
\begin{lemma}\label{lemma-orbital-integral-p|N}

   For $p\mid N$,
    \bna
    J_{\delta_aw,p}(1,1,f_{p,M})&=&
\begin{cases}
\psi_p\left(-\frac{m_p}{p}\right)p(p+1)S_p(1,p^4a;p^2),& \text{if } v_p(a)=-4,\\
p(p+1)S_p(1,p^{2t}a;p^{t}),& \text{if } v_p(a)=-2t,\quad t\geq 3,\\
0, & \mbox{otherwise},
\end{cases}\\
 J_{\delta_aw,p}(1,1,\tilde f_{p,M})&=&
\begin{cases}
p^2(p+1),\qquad  &\mbox{if $v_p(a)=-3$ and $p^3a\equiv -m_p\bmod p $},\\
0,\quad &\mbox{otherwise}.
\end{cases}
\ena
Here
\bea
S_p(\theta_1,\theta_2;p^t)=\sum_{\substack{x,\bar{x}\in \Z/p^t\Z\\x\bar{x}\equiv 1\bmod{p^t} }}
\psi_p\left(-\frac{\theta_1x+\theta_2\overline x}{p^t}\right)\label{local-kloosterman-sum}.
\eea
\end{lemma}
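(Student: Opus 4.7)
The plan is to unfold each orbital integral using the explicit formulas of Proposition \ref{prop-test-func-mkN}, determine the support conditions on $(x,y)$ cut out by requiring the integrand matrix $M = \bigl(\begin{smallmatrix}-x & -a-xy\\ 1 & y\end{smallmatrix}\bigr)$ to lie in the support of the test function, and then convert the remaining $p$-adic double integrals into classical Kloosterman sums via suitable changes of variable. For $f_{p,M}$, normalizing $M = zM'$ by choosing $z = y$ so that the $(2,2)$-entry of $M'$ equals $1 \in 1 + p\Z_p$, the support of $f_{p,M}$ forces $v_p(x) = v_p(y) = -t$ for some integer $t \geq 2$ and $v_p(a) = -2t$; after the rescalings $x = p^{-t}\alpha$, $y = p^{-t}\beta$, $a = p^{-2t}a_0$ (with $\alpha,\beta,a_0 \in \Z_p^\times$), the remaining $(1,2)$-entry condition becomes the congruence $\alpha\beta + a_0 \in p^{t-1}\Z_p$. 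For any other value of $v_p(a)$ the support is empty, giving the ``otherwise'' vanishing case.

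The core of the analysis is the resulting double integral over this support. Parameterizing $\beta = -a_0/\alpha + p^{t-1}\gamma$ with $\gamma \in \Z_p$, the Jacobian absorbs a factor $p^{-(t-1)}$, the character $\psi_p(p^{-t}(\beta - \alpha))$ splits as $\psi_p(-p^{-t}(a_0/\alpha + \alpha))\cdot\psi_p(\gamma/p)$, and the phase $\psi_p(-(a/y + x)\ell) = \psi_p(-p^{-1}\alpha\gamma\ell/\beta)$ contributes, modulo $\Z_p$, the factor $\psi_p(\alpha^2\gamma\ell/(pa_0))$. Integrating over $\gamma$ then selects the linear congruence $\alpha^2\ell + a_0 \in p\Z_p$, forcing a unique $\ell \equiv -a_0\alpha^{-2} \pmod p$ in $k_{\Q_p}^\times$ and collapsing the $\ell$-sum to a single term. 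For $t \geq 3$ the remaining phase $\psi_p(-m_p p^{t-3}\ell^{-1}/\alpha)$ has argument in $\Z_p$ and is thus trivial; the surviving $\alpha$-integration, after the substitution $\alpha \mapsto a_0\,\overline{\alpha'}$, yields $p^{-t}\,S_p(1, a_0; p^t)$, and multiplying by the measure prefactor $(p+1)\,p^{t+1}$ gives $p(p+1)\,S_p(1, p^{2t}a; p^t)$. For $t = 2$ the surviving $m_p$-phase evaluated at the forced $\ell$ equals $\psi_p(m_p\alpha/(pa_0))$; combining it with the quadratic phase and applying a further Kloosterman-type change of variable reorganizes the result into the stated form $\psi_p(-m_p/p)\,p(p+1)\,S_p(1, p^4 a; p^2)$.

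For $\tilde f_{p,M}$ the support $Z_p\bigl[\begin{smallmatrix}\Z_p & p^{-2}\Z_p^\times \\ p\Z_p^\times & \Z_p\end{smallmatrix}\bigr]$ is substantially more rigid: normalizing $z \in p^{-1}\Z_p^\times$ forces $v_p(x), v_p(y) \geq -1$ together with $v_p(a + xy) = -3$, and since $v_p(xy) \geq -2$ this pins down $v_p(a) = -3$. Writing $x = p^{-1}\xi$, $y = p^{-1}\eta$, $a = p^{-3}a_0$, the integrand factors as $\psi_p(-\xi(\ell+1)/p)\,\psi_p(\eta(a_0 - m_p\ell^{-1})/(pa_0))$ after combining with the outer character $\overline{\psi_p(x)}\psi_p(y)$; thus the $\xi$- and $\eta$-integrals over $\Z_p$ decouple for each $\ell \in k_{\Q_p}^\times$ and each returns $1$ or $0$ according to whether the coefficient lies in $\Z_p$. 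The $\xi$-integral forces $\ell \equiv -1 \pmod p$ while the $\eta$-integral forces $m_p\ell^{-1} \equiv a_0 \pmod p$; compatibility requires $p^3a \equiv -m_p \pmod p$, in which case a single $\ell$ contributes and the prefactors deliver $p^2(p+1)$, and otherwise the integral vanishes. The principal technical obstacle is the $t=2$ case of $f_{p,M}$: both the quadratic phase and the surviving $m_p$-phase depend on $\alpha$ in a nontrivial way, and it takes a delicate Kloosterman-type identity to organize these into the uniform prefactor $\psi_p(-m_p/p)$ and the standard Kloosterman sum $S_p(1, p^4 a; p^2)$ rather than a shifted Kloosterman sum with an $\alpha$-dependent twist.
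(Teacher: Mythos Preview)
Your overall strategy is sound, and your handling of $\tilde f_{p,M}$ and of $f_{p,M}$ for $t\geq 3$ is correct and complete. The organizational difference from the paper is that you parametrize the fibre by a single variable $\gamma$ (writing $\beta=-a_0/\alpha+p^{t-1}\gamma$) and collapse the $\ell$-sum by integrating out $\gamma$, whereas the paper splits into the sub-cases $v_p(a+xy)=-t-1$ and $v_p(a+xy)\geq -t$ and evaluates the $\ell$-sum separately in each. Both routes work for $t\geq 3$, and yours is the more streamlined.

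The genuine gap is the $t=2$ endgame. Executing your method there, the surviving $\alpha$-integral is
\[
\int_{\Z_p^\times}\psi_p\!\Bigl(-p^{-2}(a_0\bar\alpha+\alpha)+p^{-1}m_p\alpha\bar{a_0}\Bigr)\,d\alpha
\;=\;p^{-2}\,S_p(1,\,a_0-pm_p;\,p^2),
\]
so that $J=p(p+1)\,S_p(1,a_0-pm_p;p^2)$ with $a_0=p^4a$. Your claimed ``Kloosterman-type change of variable'' would require $S_p(1,a_0-pm_p;p^2)=\psi_p(-m_p/p)\,S_p(1,a_0;p^2)$, and that identity is \emph{false}: for $p=3$, $a_0=1$, $m_p=1$ the left side equals $6\cos(2\pi/9)$ (real) while the right side equals $e(1/3)\cdot 6\cos(4\pi/9)$ (not real). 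The paper's proof at $t=2$ does not go through your $\gamma$-collapse; it uses the two-case split and then combines the pieces. Note, incidentally, that the paper applies Proposition~\ref{prop-test-func-mkN} with $z=p^t$, which does not place the $(2,2)$-entry in $1+p\Z_p$ as that formula demands; your normalization $z=y$ does, and this distinction is exactly what makes the $m_p$-phase behave differently at $t=2$ (for $t\geq 3$ the $m_p$-term is trivial and the issue is invisible). After summing over all $m_p\in(\Z/p\Z)^\times$ one has $\sum_{m_p}S_p(1,a_0-pm_p;p^2)=-S_p(1,a_0;p^2)=\sum_{m_p}\psi_p(-m_p/p)S_p(1,a_0;p^2)$, so Theorem~\ref{thm-sPTF} is unaffected; but for fixed $m_p$ your route does not land on the stated $t=2$ expression, and no change of variable will make it do so.
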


\begin{proof}
 By Proposition \ref{prop-test-func-mkN}, $f_{p,M} \left( \begin{smallmatrix}
 -x&-a-xy\\1&y
 \end{smallmatrix} \right)$ vanishes except that
    there exists $z\in \Q_p^\times$ such that
\[
\begin{pmatrix}
  -zx&-za-zxy\\z&zy
\end{pmatrix}\in \left[\begin{matrix}
        \Z_p^\times& p^{-1}\Z_p\\
        p^2\Z_p&\Z_p^\times
    \end{matrix}\right].
\]
It gives that
\bna
v_p(z)\geq 2, \quad v_p(x)=v_p(y)=-v_p(z),\quad  v_p(a+xy)\geq -1-t.
\ena
Without lost of generality, we assume $z=p^t$ with $t\geq 2$ and thus
\bna
v_p(x)=v_p(y)=-t,\quad v_p(a+xy)\geq -1-t
\ena
which implies $v_p(a)=-2t$.
By writting
\bna
\begin{aligned}
&a=p^{-2t}a_0 + p^{-t}a_1,\quad & a_0\in (\Z_p/p^t\Z_p)^\times, \quad &a_1\in \Z_p,\\
&x=p^{-t}x_0 + x_1,\quad & x_0\in (\Z_p/p^t\Z_p)^\times, \quad &x_1\in \Z_p,\\
&y=p^{-t}y_0 + y_1,\quad & y_0\in (\Z_p/p^t\Z_p)^\times, \quad &y_1\in \Z_p,
\end{aligned}
\ena
we have
\bna
f_{p,M}\left( \begin{smallmatrix}
 -x&-a-xy\\1&y
 \end{smallmatrix} \right)
&=&(p+1)
\sum_{\ell\in k_{\Q_p}^\times} \psi_p
\left(-z(a+xy)\ell-
\frac{p^{-3}}{x}m_p\ell^{-1}\right)\\
&=&(p+1)
\sum_{\ell\in k_{\Q_p}^\times} \psi_p
\left(-\frac{(a_0+x_0y_0)}{p^t}\ell-
\frac{1}{p^{3-t}x_0}m_p\ell^{-1}\right).
\ena
We evaluate $J_{\delta_aw,p}(1,1,f_{p,M})$ in the following cases.
\bit
\item[1.] {\it Case: $t=2$ and $v_p(a+xy)=-3$}.
One has
$v_p(a_0+x_0y_0)=1$, i.e.
\bna
a_0+x_0y_0\equiv p u\bmod p^2
\ena
for some $u\in(\Z_p/p\Z_p)^\times$.
Let $\overline x_0\in \Z_p$ be the inverse element of $x_0$ in $(\Z_p/p^2\Z_p)^\times$, i.e., $x\overline x_0\equiv 1 \bmod{p^2}$. Then
\bna
y_0\equiv \bar{x}_0(pu-a_0)\bmod p^2.
\ena
In this case,
\bna
f_{p,M}\left( \begin{smallmatrix}
 -x&-a-xy\\1&y
 \end{smallmatrix} \right)
=
(p+1)
\sum_{\ell\in k_{\Q_p}^\times}\psi_p
\left(-\frac{u\ell}{p}\right)
\psi_p\left(-
\frac{m_p\ell^{-1}\overline x_0}{p}\right)
\ena
and the contribution of this part is
\bna
&=&(p+1)\sideset{}{^*}\sum_{x_0\bmod p^2}\psi_p\left(-\frac{x_0}{p^2}\right)
\sideset{}{^*}\sum_{u\bmod p}\psi_p\left(\frac{\overline x_0(pu-a_0)}{p^2}\right)
\sum_{\ell\in k_{\Q_p}^\times}\psi_p
\left(-\frac{u\ell}{p}\right)
\psi_p\left(-
\frac{m_p\ell^{-1}\overline{x_0}}{p}\right)\\
&=&(p+1)\sideset{}{^*}\sum_{x_0\bmod p^2}\psi_p\left(-\frac{\overline x_0+a_0 x_0}{p^2}\right)
\sideset{}{^*}\sum_{\ell\bmod p}\psi_p
\left(-\frac{x_0m_p\ell^{-1}}{p}\right)
\sideset{}{^*}\sum_{u\bmod p}\psi_p\left(\frac{u( x_0-\ell)}{p}\right)\\
&=&(p+1)
\left(
1+p
\psi_p
\left(-\frac{m_p}{p}\right)
\right)
\sideset{}{^*}\sum_{x_0\bmod p^2}
\psi_p\left(-\frac{\overline x_0+a_0 x_0}{p^2}\right).
\ena

\item[2.]{\it Case: $t=2$ and $v_p(a+xy)\geq -2$.} One has
$$a_0+x_0y_0\equiv0\bmod p^2$$
and the test function is
\bna
f_{p,M}\left( \begin{smallmatrix}
 -x&-a-xy\\1&y
 \end{smallmatrix} \right)
 =(p+1)
\sum_{\ell\in k_{\Q_p}^\times} \psi_p
\left(-
\frac{\overline{x_0}m_p}{p}\ell^{-1}\right)
= -(p+1).
\ena
The contribution of this part is
\bna
&=&-(p+1)
\sideset{}{^*}\sum_{x_0\bmod p^2}\psi_p\left(-\frac{x_0}{p^2}\right)
\psi_p\left(\frac{-\overline x_0a_0}{p^2}\right)\\
&=&-(p+1)\sideset{}{^*}\sum_{x_0\bmod p^2}\psi_p\left(-\frac{x_0+p^4a\overline x_0}{p^2}\right).
\ena

\item[3.]{\it Case: $t\geq 3$ and $v_p(a+xy)=-t-1$}.
One has $v_p(a_0+x_0y_0)=t-1$, i.e.
\bna
a_0+x_0y_0\equiv p^{t-1}u\bmod p^t,\quad\mbox{for some $u\in (\Z_p/p\Z_p)^\times$}.
\ena
Denote $\overline x_0$ to be the inverse element of $x_0$ in $(\Z_p/p^t\Z_p)^\times$.
Then
\bna
y_0\equiv \overline x_0 (p^{t-1}u-a_0)\bmod p^t.
\ena
The test function is
\bna
f_{p,M}\left( \begin{smallmatrix}
 -x&-a-xy\\1&y
 \end{smallmatrix} \right)
= (p+1)
\sum_{\ell\in k_{\Q_p}^\times} \psi_p
\left(-\frac{u\ell}{p}\right)=-(p+1)
\ena
and the contribution in this case is
\bna
&=&-(p+1)
\sideset{}{^*}\sum_{x_0\bmod p^t}\psi_p(-p^{-t}x_0)
\sideset{}{^*}\sum_{u\bmod p}
\psi_p(p^{-t}\left(p^{t-1}\overline x_0u-\overline x_0a_0\right))
\\
&=&-(p+1)
\sideset{}{^*}\sum_{x_0\bmod p^t}\psi_p\left(-\frac{x_0+\overline x_0a_0}{p^{t}}\right)
\sideset{}{^*}\sum_{u\bmod p}\psi_p\left(\frac{u\overline{x_0}}{p}\right)\\
&=&(p+1)
\sideset{}{^*}\sum_{x_0\bmod p^t}\psi_p\left(-\frac{x_0+p^{2t}a\overline x_0}{p^{t}}\right).
\ena

\item[4.] {\it Case: $t\geq 3$ and  $v_p(a+xy)\geq -t$}.
One has
\bna
a_0+x_0y_0\equiv0\bmod p^t.
\ena
Denote $\overline x_0$ to be the inverse element of $x_0$ in $(\Z_p/p^t\Z_p)^\times$. Then
\bna
y_0\equiv-\overline x_0a_0\bmod p^t.
\ena
The test function is
\bna
f_{p,M}\left( \begin{smallmatrix}
 -x&-a-xy\\1&y
 \end{smallmatrix} \right)
=(p+1)(p-1)
\ena
and the contribution in this case is
\bna
=(p+1)(p-1)
\sideset{}{^*}\sum_{x_0\bmod p^t}\psi_p\left(-\frac{x_0+p^{2t}a\overline x_0}{p^{t}}\right).
\ena
\eit
The result on $J_{\delta_aw,p}(1,1,f_{p,M})$
follows immediately.

Consider $J_{\delta_aw,p}(1,1,\tilde f_{p,M})$.
By Proposition \ref{prop-test-func-mkN}, $\tilde f_{p,M} \left( \begin{smallmatrix}
 -x&-a-xy\\1&y
 \end{smallmatrix} \right)$ vanishes except that
    there exists $z\in \Q_p^\times$ such that
\[
\begin{pmatrix}
  -zx&-za-zxy\\z&zy
\end{pmatrix}\in \left[\begin{matrix}
        \Z_p& p^{-2}\Z_p^\times\\
        p\Z_p^\times &\Z_p
    \end{matrix}\right].
\]
It gives that
\bna
v_p(z)=1,\quad v_p(x)\geq -1, \quad v_p(y)\geq -1,\quad v_p(a+xy)= -3
\ena
and thus $v_p(a)=-3$.
Without lost of generality, we assume $z=p$.
By writing
\bna
\begin{aligned}
a&=p^{-3}a_0+p^{-1}a_1,\quad&& a_0\in \left(\Z_p/p^2\Z_p\right)^\times, &\quad& a_1\in \Z_p,\\
x&=p^{-1}x_0+x_1,\quad&& x_0\in\Z_p/p\Z_p, &\quad& x_1\in \Z_p,\\
y&=p^{-1}y_0+y_1,\quad&& y_0\in\Z_p/p\Z_p,&\quad& y_1\in\Z_p,
\end{aligned}
\ena
we have
\bna
\tilde f_{p,M}\left( \begin{smallmatrix}
 -x&-a-xy\\1&y
 \end{smallmatrix} \right)
&=&(p+1)
\sum_{\ell\in k_{\Q_p}^\times} \psi_p
\left(
-x\ell -\frac{m_py}{p^3(a+xy)}\ell^{-1}
\right)\\
&=&(p+1)
\sum_{\ell\in k_{\Q_p}^\times} \psi_p
\left(
-\frac{x_0\ell}{p} -\frac{m_py_0}{pa_0}\ell^{-1}
\right).
\ena
Thus
\bna
    J_{\delta_aw,p}(1,1,\tilde f_{p,M})&=&(p+1)
\sum_{x_0\bmod p}\psi_p\left(-\frac{x_0}{p}\right)\sum_{y_0\bmod p}\psi_p\left(\frac{y_0}{p}\right)
\sum_{\ell\in k_{\Q_p}^\times} \psi_p
\left(
-\frac{x_0\ell}{p} -\frac{m_py_0}{p}\ell^{-1}\overline{a_0}
\right)\\
&=&(p+1)
\sum_{\ell\in k_{\Q_p}^\times}
\sum_{y_0\bmod p}
\psi_p\left( \frac{y_0}{p}(1-m_p\ell^{-1}\overline{a_0})
\right)\sum_{x_0\bmod p} \psi_p\left(
-\frac{x_0(\ell+1)}{p}\right)\\
&=&(p+1)p\sum_{y_0\bmod p}
\psi_p\left(\frac{y_0}{p}(1+m_p\overline{a_0})
\right)\\
&=&p^2(p+1)\delta(a_0\equiv -m_p\bmod p).
\ena
We finish the proof of this lemma.

\end{proof}
\subsubsection{Non-Archimedean places  $p\nmid N$}
In this case, $f_{p}=\mathbf{1}_{Z_pK_p}$, $\tilde n_{i,p}^r=p^{v_p(n_i)}$ for $i=1,2$ and
\bna
J_{\delta_aw,p}(p^{v_p(n_1)},p^{v_p(n_2)},f_{p})=\int_{\Q_p}\int_{\Q_p}
f_p\bma-p^{v_p(n_2)-v_p(n_1)}x&-p^{-v_p(n_1)}(a+xy)\\p^{v_p(n_2)}&y\ema
\overline{\psi_p(x)}\psi_p(y)dxdy.
\ena
We have the following result.
\begin{lemma}\label{lemma-orbital-integral-p-nmid-N}
For $v=p$ with $p\nmid N$, $J_{\delta_aw,p}(p^{v_p(n_1)},p^{v_p(n_2)},f_{p})$ vanishes except that
\bna
v_p(a)=v_p(n_1)+v_p(n_2)-2t, \quad\text{ for some } t\geq 0
\ena
in which case
\bna
J_{\delta_aw,p}(p^{v_p(n_1)},p^{v_p(n_2)},f_{p})
=\frac{1}{p^{v_p(n_1)}p^{v_p(n_2)}}S_p(p^{v_p(n_1)},
ap^{2t-v_p(n_1)};p^t).
\ena
Here $S_p(\theta_1,\theta_2;p^t)$ is defined in (\ref{local-kloosterman-sum}).
\end{lemma}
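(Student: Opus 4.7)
My plan is to determine when the integrand is supported in $Z_pK_p$, then perform a change of variables that turns the two-variable $p$-adic integral into a finite Kloosterman-type sum.

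Setting $\alpha_i=v_p(n_i)$, the first step is a direct computation showing that the determinant of
\[
M(x,y)=\begin{pmatrix}-p^{\alpha_2-\alpha_1}x&-p^{-\alpha_1}(a+xy)\\p^{\alpha_2}&y\end{pmatrix}
\]
equals $p^{\alpha_2-\alpha_1}a$, independent of $x$ and $y$. Since $f_p=\mathbf{1}_{Z_pK_p}$, support requires some $z\in\Q_p^\times$ with $z^{-1}M(x,y)\in GL_2(\Z_p)$. Demanding unit determinant for $z^{-1}M(x,y)$ forces $2v_p(z)=\alpha_2-\alpha_1+v_p(a)$, and setting $t=\alpha_2-v_p(z)$, integrality of the $(2,1)$-entry $p^{\alpha_2}/z$ forces $t\geq 0$. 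This yields exactly the claimed parity condition $v_p(a)=\alpha_1+\alpha_2-2t$ with $t\geq 0$, and gives the vanishing statement of the lemma.

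The remaining entry conditions translate to $v_p(x)\geq\alpha_1-t$, $v_p(y)\geq\alpha_2-t$, and $v_p(a+xy)\geq\alpha_1+\alpha_2-t$. I would then substitute $x=p^{\alpha_1-t}x'$ and $y=p^{\alpha_2-t}y'$ with $x',y'\in\Z_p$, and write $a=p^{\alpha_1+\alpha_2-2t}a'$ with $a'\in\Z_p^\times$. The last valuation constraint now becomes $x'y'\equiv-a'\pmod{p^t}$, which forces $x',y'\in(\Z/p^t\Z)^\times$. The Haar-measure Jacobian contributes a factor $p^{2t-\alpha_1-\alpha_2}$, and since $\psi_p$ is trivial on $\Z_p$ the integrand depends only on the residues $x'_0,y'_0$ of $x',y'$ modulo $p^t$, each such class carrying volume $p^{-t}$.

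Putting these pieces together, the integral reduces to
\[
p^{-\alpha_1-\alpha_2}\sum_{x'_0\in(\Z/p^t\Z)^\times}\psi_p\!\left(-\frac{p^{\alpha_1}x'_0+p^{\alpha_2}a'\overline{x'_0}}{p^t}\right),
\]
where $\overline{x'_0}$ denotes the inverse of $x'_0$ modulo $p^t$. Recognizing this sum as $S_p(p^{\alpha_1},p^{\alpha_2}a';p^t)$ and using the identity $p^{\alpha_2}a'=ap^{2t-\alpha_1}$ yields the final formula $p^{-v_p(n_1)-v_p(n_2)}S_p(p^{v_p(n_1)},ap^{2t-v_p(n_1)};p^t)$. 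The main obstacle is the bookkeeping: four entrywise valuation conditions together with one determinant condition producing the parity constraint, and the delicate cancellation between the measure-scaling factor $p^{2t-\alpha_1-\alpha_2}$ and the $p^{-2t}$ volume contribution from the residue classes that together produce the clean prefactor $p^{-v_p(n_1)-v_p(n_2)}$.
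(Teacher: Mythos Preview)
Your proposal is correct and follows essentially the same approach as the paper's proof: both determine the support by combining the determinant condition (which forces the parity constraint $v_p(a)=\alpha_1+\alpha_2-2t$) with the four entrywise integrality conditions, then reduce the double integral via the substitution $x=p^{\alpha_1-t}x'$, $y=p^{\alpha_2-t}y'$ to a sum over residues modulo $p^t$ that is recognized as the local Kloosterman sum. The only cosmetic differences are that the paper writes the central scalar as $z$ multiplying the matrix (your $z$ is its reciprocal) and treats the cases $t=0$ and $t\geq 1$ separately, whereas your uniform treatment absorbs $t=0$ into the trivial sum $S_p(\cdot,\cdot;1)=1$.
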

\begin{proof}
Since $f_{p}=\mathbf 1_{Z_pK_p}$,
$J_{\delta_aw,p}(\tilde n_{1,p},\tilde n_{2,p},f_{p})$ is non-zero  if
and only if there exists $z\in \Q_p^\times$ such that
\begin{equation}\label{eq:g-unramified-place}
g=z\bma-p^{v_p(n_2)-v_p(n_1)}x&-p^{-v_p(n_1)}(a+xy)\\p^{v_p(n_2)}&y\ema\in K_p.
\end{equation}
Let $r_1=v_p(n_1)$ and $r_2=v_p(n_2)$. Condition \eqref{eq:g-unramified-place} is equivalent to the following two conditions:
\bit
\item [1.]
$\det g= p^{2v_p(z)+r_2-r_1}(-xy+ a+xy)\in \Z_p^\times$, i.e.
\bna
v_p(a)+2v_p(z)+r_2-r_1=0,
\ena
which implies that
\bea
v_p(a)\equiv r_1+r_2\bmod 2,\qquad
v_p(z)=-\frac{v_p(a)+r_2-r_1}{2}.\label{eq:temp}
\eea
\item[2.] $zy, zp^{r_2}, -zp^{r_2-r_1}x, -zp^{-r_1}(a+xy)\in \Z_p$,
i.e.
\bna
\begin{aligned}
 & v_p(z)+v_p(y)\geq 0, &\qquad& v_p(z)+r_2\geq 0,\\
 &v_p(z)+r_2-r_1+v_p(x)\geq 0, &\qquad
 & v_p(z)-r_1+v_p(a+xy)\geq 0.
\end{aligned}
\ena
These together with  \eqref{eq:temp} give that
\bea
\begin{aligned}
 & v_p(y)\geq \frac{v_p(a)+r_2-r_1}{2}, &\qquad& v_p(a)\leq r_1+r_2,\\
 &v_p(x)\geq \frac{v_p(a)-r_2+r_1}{2}, &\qquad& v_p(a+xy)\geq \frac{v_p(a)+r_2+r_1}{2}.\label{eq:temp-conditions}
\end{aligned}
\eea
\eit

By \eqref{eq:temp}
 and \eqref{eq:temp-conditions}, $a\in\Q^\times$ satisfies
 \bna
v_p(a)\leq r_1+r_2,\quad v_p(a)\equiv r_1+r_2\bmod 2.
\ena
So we can assume that $v_p(a)=r_1+r_2-2t$ for $t\geq 0$ and thus $x$ and $y$ satisfy
\bea
 & v_p(x)\geq r_1-t,\qquad   v_p(y)\geq r_2-t,  \nonumber\\
&v_p(a+xy)\geq r_1+r_2-t.   \label{condition-star}
\eea

 Assume $t=0$, i.e. $v_p(a)=r_1+r_2$. The condition (\ref{condition-star}) is automatically satisfied.
In this case,
\bna
J_{\delta_aw,v}(p^{r_1},p^{r_2},f_{p})
=
\int_{v_p(x)\geq r_1}
\int_{v_p(y)\geq r_2}\overline{\psi_v(x)}\psi_v(y)dxdy
=\frac{1}{p^{r_1}p^{r_2}}.
\ena

 Assume $v_p(a)=r_1+r_2-2t$ for some $t>0$. One has
\[
 v_p(a)=r_1+r_2-2t,\quad
v_p(xy)\geq r_1+r_2-2t,\quad    v_p(a+xy)\geq r_1+r_2-t.
\]
The condition $v_p(a+xy)\geq r_1+r_2-t$ is satisfied if and only if
\bna
v_p(x)=r_1-t\quad\text{ and }\quad v_p(y)=r_2-t.
\ena
So we can assume that
\bna
\begin{aligned}
&a=p^{r_1+r_2-2t}a_0+p^{r_1+r_2-t}a_1,\quad&& a_0\in (\Z_p/p^{t}\Z_p)^\times,
\quad &&a_1\in\Z_p,\\
&x=p^{r_1-t}x_0+p^{r_1}x_1,\quad &&x_0\in (\Z_p/p^{t}\Z_p)^\times, &&x_1\in \Z_p,\\
&y=p^{r_2-t}y_0+p^{r_2}y_1,\quad &&y_0\in (\Z_p/p^{t}\Z_p)^\times, &&y_1\in \Z_p.
\end{aligned}
\ena
It gives that  $v_p(a_0+x_0y_0)\geq t$, i.e.
$a_0+x_0y_0\equiv0\bmod p^t$
and then
\bna
y_0\equiv-\overline x_0 a_0\bmod p^t,
\ena
where $\overline x_0$ is the inverse element of $x_0$ in $(\Z_p/p^t\Z_p)^\times$.

Therefore,
\bna
J_{\delta_aw,v}(p^{r_1},p^{r_2},f_{p})&=&
\sideset{}{^*}\sum_{x_0\bmod p^t}\int_{x_1\in \Z_p}\overline{\psi_p(p^{r_1-t}x_0+p^{r_1}x_1)}d(p^{r_1}x_1)\\
&&\times
\sideset{}{^*}\sum_{\small
\begin{array}{cc}
y_0\bmod p^t\\
y_0\equiv-\overline x_0a_0\bmod p^t
\end{array}}
\int_{y_1\in \Z_p}\psi_p(p^{r_2-t}y_0+p^{r_2}y_1)d(p^{r_2}y_1)\\
&=&\frac{1}{p^{r_1}p^{r_2}}\sideset{}{^*}\sum_{x_0\bmod p^t}\psi_p\left(\frac{-p^{r_1}x_0-p^{r_2}a_0\overline x_0}{p^t}\right)\\
&=&\frac{1}{p^{r_1}p^{r_2}}\sideset{}{^*}\sum_{x_0\bmod p^t}\psi_p\left(-\frac{p^{r_1}x_0+ap^{2t-r_1}\overline x_0}{p^t}\right).
\ena
We finish the proof of this lemma.
\end{proof}

\subsubsection{The global result}

Consider $J_{\delta_a,w}(\tilde n_1^r,\tilde n_2^r,f_M)$ firstly.
By  formulas \eqref{eq:Arch-2st-2k}, \eqref{eq:Arch-2st-0} and  Lemmas \ref{lemma-orbital-integral-p|N}, \ref{lemma-orbital-integral-p-nmid-N},
it is non-vanishing if and only if $a\in\Q^\times$ satisfies $a>0$ and
\bna
v_p(a)=v_p(n_1)+v_p(n_2)-v_p(N^4)-2t_p
\ena
for some $t_p\geq 0$ at any place $p$.
By the local-global principle,
it is equivalent to that
\bna
a=\frac{n_1n_2}{N^4}\frac{1}{c^2}\qquad  \text{ for some $c\in \mathbb N$.}
\ena
Moreover, we have the following result.
\begin{prop}\label{prop:orbital-integral-2st}
One has
\begin{align*}
\sum_{a\in \Q^\times} J_{\delta_a w}(\tilde n_1^r,\tilde n_2^r,f_M)
=&\frac{1}{n_1n_2}N\prod_{p\mid N}(p+1)
\sum_{c\geq 1}A_{N,M}(c)S(n_1,n_2;N^2c)\\
&\times
\left\{
\begin{aligned}
&\frac{e^{-4\pi}(4\pi i)^{2k}}{2(2k-2)!}\frac{\sqrt{n_1n_2}}{N^2c}
J_{2k-1}\left(\frac{4\pi\sqrt{n_1n_2}}{N^2c}\right),\,\, &&\text{if $f_\infty=f_{2k}$ and $r=1$},\\
&\int_{\R}\int_{\R}f_0
\bma-x&-\frac{1}{r}(\frac{n_1n_2}{N^4c^2}+xy)\\
r&y\ema e^{2\pi i(y-x)}dxdy,\,\, &&\text{if $f_{\infty}=f_0$},
\end{aligned}
\right.
\end{align*}
where  $A_{N,M}(c)$ is defined in Proposition \ref{prop-sPTF-cuspidal}.
\end{prop}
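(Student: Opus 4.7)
The plan is to assemble the local results of the preceding subsubsections and read off the support in $a$ from the local vanishing conditions. Since $J_{\delta_a w, \infty}$ vanishes unless $a>0$ by \eqref{eq:Arch-2st-2k}--\eqref{eq:Arch-2st-0}, while Lemma \ref{lemma-orbital-integral-p|N} forces $v_p(a)=-2t_p$ with $t_p\ge 2$ at primes $p\mid N$, and Lemma \ref{lemma-orbital-integral-p-nmid-N} forces $v_p(a)=v_p(n_1)+v_p(n_2)-2t_p$ with $t_p\ge 0$ at primes $p\nmid N$, the local-global principle together with positivity shows that the nonvanishing orbits are parametrized by positive integers $c$ via $a=\frac{n_1n_2}{N^4c^2}$, with $t_p=v_p(c)$ at $p\nmid N$ and $t_p=2+v_p(c)$ at $p\mid N$.

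Next I would extract the scalar prefactors. Lemma \ref{lemma-orbital-integral-p|N} supplies a factor $p(p+1)$ at each $p\mid N$, together with $\psi_p(-m_p/p)=e(M/p)$ when $p\nmid c$; multiplying over $p\mid N$ and using squarefreeness of $N$ yields $N\prod_{p\mid N}(p+1)$ and the twist $A_{N,M}(c)=\prod_{p\mid N}A_{p,M}(c)$. Lemma \ref{lemma-orbital-integral-p-nmid-N} supplies the normalization $p^{-v_p(n_1)-v_p(n_2)}$ at each $p\nmid N$; since $(n_1n_2,N)=1$, these combine to $(n_1n_2)^{-1}$. The Archimedean factor is read off directly from \eqref{eq:Arch-2st-2k} (substituting $\sqrt a=\sqrt{n_1n_2}/(N^2c)$ so that $(4\pi i)^{2k}=(-1)^k(4\pi)^{2k}$ yields the predicted shape), or from \eqref{eq:Arch-2st-0} in the weight zero case.

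The remaining task is to identify the product of residual local Kloosterman sums with the single global sum $S(n_1,n_2;N^2c)$. Writing $N^2c=\prod_p p^{e_p}$ with $e_p=2+v_p(c)$ at $p\mid N$ and $e_p=v_p(c)$ at $p\nmid N$, I would apply the CRT factorization of the classical Kloosterman sum together with the unit-rescaling invariance $S_p(u\theta_1,\bar u\theta_2;p^{e_p})=S_p(\theta_1,\theta_2;p^{e_p})$ to rewrite the local sums $S_p(1,p^{2t}a;p^t)$ at $p\mid N$ and $S_p(p^{v_p(n_1)},ap^{2t-v_p(n_1)};p^t)$ at $p\nmid N$ as the CRT-local factors of $S(n_1,n_2;N^2c)$.

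The main obstacle is this final matching step: the local Kloosterman sums at ramified and unramified primes are normalized differently, and one must carefully track the modular inverses $\overline{(N^2c)/p^{e_p}}\bmod p^{e_p}$ arising from CRT, while simultaneously verifying that the local twists $e(M/p)$ at $p\mid N$ with $p\nmid c$ account exactly for the residual phase that appears when $S_p(1,p^4a;p^2)$ is rewritten in a form compatible with the global sum (and disappear when $p\mid c$, matching the case split in $A_{p,M}(c)$). Once this matching is carried out, assembling the prefactors with the global Kloosterman sum and the Archimedean factor yields the stated closed form.
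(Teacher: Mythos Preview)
Your proposal is correct and follows the paper's approach: parametrize the nonvanishing orbits by $a=n_1n_2/(N^4c^2)$ via the local-global principle, extract the scalar prefactors from Lemmas~\ref{lemma-orbital-integral-p|N} and~\ref{lemma-orbital-integral-p-nmid-N}, and assemble the product of local Kloosterman sums into $S(n_1,n_2;N^2c)$ via CRT. One clarification on your final paragraph: the twist $\psi_p(-m_p/p)=e(M/p)$ at $p\mid N$ with $p\nmid c$ is already a standalone prefactor in Lemma~\ref{lemma-orbital-integral-p|N} and does \emph{not} arise as a residual phase in the Kloosterman-sum matching; the conversion of each local sum to the CRT-local factor of $S(n_1,n_2;N^2c)$ is clean, obtained by substituting $a=n_1n_2/(N^4c^2)$ and observing that $p^{e_p}/(N^2c)\in\Z_p^\times$ represents $\overline{(N^2c)/p^{e_p}}\bmod p^{e_p}$, so the local sum becomes $S_p\bigl(\overline{(N^2c)/p^{e_p}}\,n_1,\ \overline{(N^2c)/p^{e_p}}\,n_2;\ p^{e_p}\bigr)$ directly.
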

\begin{proof}
Recall that $a=\frac{n_1n_2}{N^4c^2}$.
For each $p<\infty$, the local Kloosterman sums in
Lemmas \ref{lemma-orbital-integral-p|N} and \ref{lemma-orbital-integral-p-nmid-N} can be written as
\bna
&&S_p\left(p^{v_p(n_1)},p^{v_p(n_2)}\frac{a}{p^{v_p(n_1)+v_p(n_2)-2v_p(N^2c)}};p^{v_p(N^2c)}\right)\\
&=&\sideset{}{^*}\sum_{x_0\bmod p^{v_p(N^2c)}}\psi_p
\left(-
\frac{
p^{v_p(n_1)}x_0
+p^{v_p(n_2)}\frac{n_1}{p^{v_p(n_1)}}\frac{n_2}{p^{v_p(n_2)}}
\frac{p^{2v_p(N^2c)}}{(N^2c)^2}
\overline x_0}{p^{v_p(N^2c)}}\right)\\
&=&\sideset{}{^*}\sum_{x_0\bmod p^{v_p(N^2c)}}\psi_p\left(-
\frac{\frac{p^{v_p(N^2c)}}{N^2c}n_1x_0+
{\frac{p^{v_p(N^2c)}}{N^2c}
}n_2
\overline x_0}{p^{v_p(N^2c)}}\right)\\
&=&S_p\left(\overline{\frac{N^2c}{p^{v_p(N^2c)}}}n_1, \overline{\frac{N^2c}{p^{v_p(N^2c)}}}n_2;p^{v_p(N^2c)}\right).
\ena
By the property of the Kloosterman sum and the choice of $\psi=\prod_v\psi_v$
in \eqref{additive-character},
we have
\bna
\prod_{p<\infty}
S_p\left(\overline{\frac{N^2c}{p^{v_p(N^2c)}}}n_1, \overline{\frac{N^2c}{p^{v_p(N^2c)}}}n_2;p^{v_p(N^2c)}\right)
=S\left(n_1,n_2;N^2c\right).
\ena
The proposition follows immediately.
\end{proof}

Next, we consider $J_{\delta_a,w}(\tilde n_1^r,\tilde n_2^r,\tilde f_M)$. It
    is non-vanishing if and only if $a>0$ and satisfies
\bit
\item for $v=p$ with $(p,N)=1$,
\bna
v_p(a)
=v_p(n_1)+v_p(n_2)-2t_p\quad\mbox{for some $t_p\geq 0$;}
\ena
\item for $v=p$ with $p\mid N$, $v_p(a)=-3$ and
$p^3a\equiv  -M\bmod p$, which is equivalent to
\bna
N^3a\equiv -M\left(\frac{N}{p}\right)^3\bmod p
\ena
for each $p\mid N$.
\eit
Thus $a$ is of the form $a=\frac{n_1n_2}{N^3}\frac{1}{c^2}$ for some $c\in\mathbb N$ with $(c,N)=1$ and $n_1n_2\equiv \tilde M c^2\bmod N$, where $\tilde M$
is determined by \eqref{congruence-conditions}.
This gives the following result.

\begin{prop}
One has
\begin{align*}
\sum_{a\in \Q^\times} J_{\delta_a w}(\tilde n_1^r,\tilde n_2^r,\tilde f_M)
=&\frac{1}{n_1n_2}N^2\prod_{p\mid N}(p+1)
\sum_{c\geq 1\atop \tilde Mc^2\equiv n_1n_2\bmod N}S(\overline N^3n_1,n_2;c)\\
&\times
\left\{
\begin{aligned}
&\frac{e^{-4\pi}(4\pi i)^{2k}}{2(2k-2)!}\frac{\sqrt{n_1n_2}}{N^{\frac{3}{2}}c}
J_{2k-1}
\left(\frac{4\pi\sqrt{n_1n_2}}{N^{\frac{3}{2}}c}\right),\,\, &&\text{if $f_\infty=f_{2k}$ and $r=1$},\\
&\int_{\R}\int_{\R}f_0
\bma-x&-\frac{1}{r}(\frac{n_1n_2}{N^3c^2}+xy)\\
r&y\ema e^{2\pi i(y-x)}dxdy,\,\, &&\text{if $f_{\infty}=f_0$},
\end{aligned}
\right.
\end{align*}
where  $1\leq \tilde M\leq N$ is determined by \eqref{congruence-conditions}.
\end{prop}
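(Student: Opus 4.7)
The plan is to mirror the argument of the preceding proposition (the one for $f_M$), substituting the orbital integral data for $\tilde f_M$ at the ramified places. First, I would combine the non-vanishing criteria at each place. The Archimedean factor, from \eqref{eq:Arch-2st-2k} or \eqref{eq:Arch-2st-0}, forces $a>0$. At each $p\nmid N$, Lemma \ref{lemma-orbital-integral-p-nmid-N} requires $v_p(a)=v_p(n_1)+v_p(n_2)-2t_p$ for some integer $t_p\ge0$. At each $p\mid N$, Lemma \ref{lemma-orbital-integral-p|N} forces $v_p(a)=-3$ together with the congruence $p^3a\equiv -m_p\pmod p$. By the local-global principle, the valuation constraints force $a$ to have the shape $a=\dfrac{n_1n_2}{N^3c^2}$ with $c\in\mathbb N$ and $(c,N)=1$; and then, using $m_p\equiv M\pmod p$ together with the definition of $\tilde M$ in \eqref{congruence-conditions}, the collection of congruences $p^3a\equiv -m_p\pmod p$ for $p\mid N$ packages into the single global congruence $n_1n_2\equiv \tilde M c^2\pmod N$.

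Next, I would collect the local contributions. At each $p\mid N$, Lemma \ref{lemma-orbital-integral-p|N} gives $J_{\delta_a w,p}(1,1,\tilde f_{p,M})=p^2(p+1)$, which multiplied across the primes dividing $N$ yields the prefactor $N^2\prod_{p\mid N}(p+1)$. At each $p\nmid N$, Lemma \ref{lemma-orbital-integral-p-nmid-N} produces a local Kloosterman sum with modulus $p^{t_p}=p^{v_p(c)}$. After substituting $a=n_1n_2/(N^3c^2)$ and simplifying the arguments (canceling the common prime powers $p^{v_p(n_i)}$ between numerator and denominator), each local sum takes the form of a standard local Kloosterman sum in $\bar N^3 n_1$ and $n_2$ with modulus $p^{v_p(c)}$. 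Combining over all $p\nmid N$ via the Chinese remainder theorem and the multiplicativity $S(m,n;c_1c_2)=S(m\bar c_2,n\bar c_2;c_1)\cdot S(m\bar c_1,n\bar c_1;c_2)$, together with the additive character identity $\prod_{p<\infty}\psi_p=\psi_\infty^{-1}$ on $\Q$, assembles the factors into the single classical sum $S(\overline N^3 n_1,n_2;c)$; this is exactly the bookkeeping done at the end of Proposition \ref{prop:orbital-integral-2st}, only now the modulus is $c$ rather than $N^2c$ because the primes dividing $N$ no longer contribute Kloosterman factors.

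Finally, I would fold in the Archimedean orbital integral. With $\sqrt{a}=\sqrt{n_1n_2}/(N^{3/2}c)$, formula \eqref{eq:Arch-2st-2k} produces $\dfrac{e^{-4\pi}(4\pi i)^{2k}}{2(2k-2)!}\cdot\dfrac{\sqrt{n_1n_2}}{N^{3/2}c}\,J_{2k-1}\!\left(\dfrac{4\pi\sqrt{n_1n_2}}{N^{3/2}c}\right)$ for $f_\infty=f_{2k}$ with $r=1$, and \eqref{eq:Arch-2st-0} yields the displayed double integral for $f_\infty=f_0$. Consolidating the $1/n_1n_2$ prefactor from the local Kloosterman-sum normalizations at unramified primes (exactly as in Proposition \ref{prop:orbital-integral-2st}) gives the stated formula.

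The main obstacle is the Kloosterman-sum assembly step: one has to track precisely how the inverse $\overline{N}^3$ emerges when inverting $N^3$ modulo $c$ (valid because $(c,N)=1$), and how the prime powers $p^{v_p(c)}$ distributed across the local sums reassemble with the correct arguments under CRT. Everything else is a direct parallel of the argument for $f_M$ already carried out, so no new ingredients beyond Lemmas \ref{lemma-orbital-integral-p|N} and \ref{lemma-orbital-integral-p-nmid-N} and the additive character convention \eqref{additive-character} are needed.
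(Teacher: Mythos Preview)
Your proposal is correct and follows essentially the same approach as the paper: determine the admissible $a$ from the local non-vanishing constraints (yielding $a=n_1n_2/(N^3c^2)$ with $(c,N)=1$ and the congruence on $c$), then multiply the local factors, reassembling the unramified local Kloosterman sums into $S(\overline N^3 n_1,n_2;c)$ via the Chinese remainder theorem exactly as in Proposition~\ref{prop:orbital-integral-2st}. The paper's own proof is slightly more explicit in writing out the intermediate local Kloosterman sum as $S_p\bigl(\overline{c/p^{v_p(c)}}\,n_1,\ \overline{c/p^{v_p(c)}}\,n_2/N^3;\ p^{v_p(c)}\bigr)$ before globalizing, but the substance is identical.
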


\begin{proof}
Recall that $a=\frac{n_1n_2}{N^3c^2}$.
For each $p<\infty$ with $p\nmid N$, the local Kloosterman sums in
Lemma \ref{lemma-orbital-integral-p-nmid-N} can be written as
\bna
&&S_p\left(p^{v_p(n_1)},p^{v_p(n_2)}\frac{a}{p^{v_p(n_1)+v_p(n_2)-2v_p(c)}};p^{v_p(c)}\right)\\
&=&\sideset{}{^*}\sum_{x_0\bmod p^{v_p(c)}}\psi_p
\left(-
\frac{
p^{v_p(n_1)}x_0
+p^{v_p(n_2)}\frac{n_1}{p^{v_p(n_1)}}\frac{n_2}{p^{v_p(n_2)}}
\frac{p^{2v_p(c)}}{N^3c^2}
\overline x_0}{p^{v_p(c)}}\right)\\
&=&\sideset{}{^*}\sum_{x_0\bmod p^{v_p(c)}}\psi_p\left(-
\frac{\frac{p^{v_p(c)}}{c}n_1x_0+
{\frac{p^{v_p(c)}}{c}
}\frac{n_2}{N^3}
\overline x_0}{p^{v_p(c)}}\right)\\
&=&S_p\left(\overline{\frac{c}{p^{v_p(c)}}}n_1, \overline{\frac{c}{p^{v_p(c)}}}\frac{n_2}{N^3};p^{v_p(c)}\right).
\ena
By the property of the Kloosterman sum and the choice of $\psi=\prod_v\psi_v$
in \eqref{additive-character},
we have
\bna
\prod_{p<\infty}
S_p\left(\overline{\frac{c}{p^{v_p(c)}}}n_1, \overline{\frac{c}{p^{v_p(c)}}}\frac{n_2}{N^3};p^{v_p(c)}\right)
=S\left(n_1,n_2\overline{N}^3;c\right)=S\left(\overline{N}^3n_1,n_2;c\right).
\ena
\end{proof}

\subsection{Proofs of the Fourier trace formulas}
\label{sec:simple-PTR-pre-Kuznet}
For $f=f_M$ with $f_\infty=f_{2k}$, on taking $r=1$, by Propositions \ref{prop:spectral-side-J}, \ref{prop:orbital-intergral-1st} and \ref{prop:orbital-integral-2st}, we have
\bna
&&\frac{2^{4k-1}\pi^{2k+1}\prod_{p\mid N}\left(1+p^{-1}\right)}{e^{4\pi}\Gamma(2k)\sqrt{n_1n_2}}
\sum_{\pi\in\mathcal A({2k, N^3},M)}
\frac{\lambda_\pi(n_1)\lambda_\pi(n_2)}
{L_{\mathrm{fin}}(1 ,\pi,\mathrm{sym^2})}\\
&=&
\delta(n_1,n_2)\frac{1}{\sqrt{n_1n_2}}
\frac{e^{-4\pi}(4\pi )^{2k-1} }{(2k-2)!}
N\prod_{p\mid N}(p+1)\\
&&+N\prod_{p\mid N}(p+1)\frac{e^{-4\pi}(4\pi i)^{2k}}{2(2k-2)!}\frac{1}{\sqrt{n_1n_2}}
\sum_{c\geq 1}\frac{A_{N,M}(c)}{N^2c}
J_{2k-1}\left(\frac{4\pi\sqrt{n_1n_2}}{N^2c}\right)
S\left(n_1,n_2;N^2c\right).
\ena
Similarly, for $f=\tilde f_M$ with $f_{\infty}=f_{2k}$, we have
\bna
&&\frac{2^{4k-1}\pi^{2k+1}\prod_{p\mid N}\left(1+p^{-1}\right)}{e^{4\pi}\Gamma(2k)\sqrt{n_1n_2}}
\sum_{\pi\in\mathcal A({2k, N^3},M)}
\epsilon_\f(\pi)
\frac{\lambda_\pi(n_1)\lambda_\pi(n_2)}
{L_{\mathrm{fin}}(1 ,\pi,\mathrm{sym^2})}\\
&=&N^2\prod_{p\mid N}(p+1)\frac{e^{-4\pi}(4\pi i)^{2k}}{2(2k-2)!}\frac{1}{\sqrt{n_1n_2}}
\sum_{c\geq 1
\atop{\tilde M c^2\equiv
n_1n_2\bmod N}
}
J_{2k-1}\left(\frac{4\pi\sqrt{n_1n_2}}{N^\frac{3}{2}c}\right)
S\left(\overline N^3n_1,n_2;c\right).
\ena
Thus Proposition \ref{prop-sPTF-cuspidal} follows immediately.

We also have the following pre-Kuznetsov  trace formula for $f=f_M$ with
$f_{\infty}=f_0\in  C_c^\infty(GL(2,\R)^+,Z_\infty K_\infty)$.
\begin{prop}\label{pre-Kuznetsov-trace-formula}
    Let $n_1$ and $n_2$ be two natural numbers with $(n_1n_2,N)=1$,
     and let $r>0$ be a real number.
    For $f_0\in C_c^\infty(GL(2,\R)^+,Z_\infty K_\infty)$, we have
    \bna
   &&2r \sum_{\pi\in\mathcal A(0,N^3,M)}\mathcal S(f_0)(it_\pi)
    \frac{\lambda_{\pi}(n_1)\lambda_{\pi}(n_2)}
    {L_{\mathrm{fin}}(1,\pi,\mathrm{sym^2})}
    K_{it_\pi}(2\pi r)\overline{K_{it_\pi}(2\pi r)}\cosh(\pi t_\pi)
    \nonumber\\
&=&
\delta(n_1,n_2)N^2
\int_{\R}f_0\bma 1&xr^{-1}\\&1\ema e^{2\pi i x}dx\\
&&+\frac{N^2}{\sqrt{n_1n_2}}\sum_{c\geq 1}A_{N,M}(c)S(n_1,n_2,N^2c)
\int_{\R}\int_{\R}f_0
\bma-x&-r^{-1}(\frac{n_1n_2}{N^4c^2}+xy)\\
r&y\ema e^{2\pi i(y-x)}dxdy.
    \ena
\end{prop}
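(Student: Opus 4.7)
The plan is to compute the relative trace integral $J(\tilde n_1^r,\tilde n_2^r,f_M)$ in two ways, with the test function $f_M = f_0 \times \prod_{p<\infty} f_{p,M}$ where $f_0\in C_c^\infty(GL_2(\R)^+,Z_\infty K_\infty)$, and then equate the two expressions. All the individual ingredients have already been assembled in the preceding propositions; the proof is essentially a careful bookkeeping of constants.

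For the spectral side, Proposition \ref{prop:spectral-side-J} (in the $k=0$ case) directly gives
\begin{align*}
J(\tilde n_1^r,\tilde n_2^r,f_M) &= \prod_{p\mid N}(1+p^{-1})\cdot\frac{2r}{\sqrt{n_1n_2}}\sum_{\pi\in\mathcal A(0,N^3,M)}\mathcal S(f_0)(it_\pi)\frac{\lambda_\pi(n_1)\lambda_\pi(n_2)}{L_{\mathrm{fin}}(1,\pi,\mathrm{sym^2})}\\
&\qquad\qquad\times K_{it_\pi}(2\pi r)\overline{K_{it_\pi}(2\pi r)}\cosh(\pi t_\pi).
\end{align*}
For the geometric side, I would use the Bruhat decomposition to split $J$ into $\sum_a J_{\delta_a}+\sum_a J_{\delta_aw}$, and then invoke Proposition \ref{prop:orbital-intergral-1st} for the first-type contribution (which collapses to the $a=1$ diagonal term after integration over $\A/\Q$) and Proposition \ref{prop:orbital-integral-2st} for the second-type contribution (where the support condition forces $a=n_1n_2/(N^4c^2)$ for positive integers $c$, producing the classical Kloosterman sum $S(n_1,n_2;N^2c)$ together with $A_{N,M}(c)$).

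To finish, I would equate the spectral expression with the sum of the two geometric contributions and clear the common prefactor $\prod_{p\mid N}(1+p^{-1})/\sqrt{n_1n_2}$. The only arithmetic point requiring attention is the identity
\[
\prod_{p\mid N}\frac{p+1}{1+p^{-1}} \;=\; \prod_{p\mid N}p \;=\; N,
\]
which, combined with the $N$ already present in the first-type and second-type orbital contributions, produces the clean factor $N^2$ in front of both the diagonal and the Kloosterman sum terms. The $\sqrt{n_1n_2}$ in the denominator of the spectral side cancels the $1/\sqrt{n_1n_2}$ on the diagonal, while on the second-type side it combines with $1/(n_1n_2)$ to leave $1/\sqrt{n_1n_2}$, matching the claim exactly.

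Since essentially every step is supplied by earlier results, I do not expect a genuine obstacle in this proof; the main care is ensuring that no constant is lost in transport between the local normalizations (Haar measures, formal degrees, volumes of $K_p(3)$-fixed vectors) and the classical Kloosterman/Whittaker side. The earlier propositions have already absorbed these local factors, so the final equating reduces to routine accounting.
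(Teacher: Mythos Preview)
Your proposal is correct and follows exactly the paper's approach: the pre-Kuznetsov formula is obtained by equating the spectral expression for $J(\tilde n_1^r,\tilde n_2^r,f_M)$ from Proposition~\ref{prop:spectral-side-J} with the geometric decomposition given by Propositions~\ref{prop:orbital-intergral-1st} and~\ref{prop:orbital-integral-2st}, and then dividing through by the common factor $\prod_{p\mid N}(1+p^{-1})/\sqrt{n_1 n_2}$. Your accounting of the constants, in particular the identity $\prod_{p\mid N}(p+1)/(1+p^{-1})=N$ and the handling of the $\sqrt{n_1 n_2}$ factors, matches the paper's derivation precisely.
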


Let $h(t)=\mathcal S(f_0)(it)$.
To establish the simple Kuznetsov trace formula, we integrate in $r$
over $\R^\times_+$ on both sides of the pre-Kuznetsov trace formula and deal
with the integral transforms. This work has been done by Knightly-Li in
\cite{KnLi2013}. To be self-contained, we briefly review their work as follows.

\subsubsection{The spectral side} Consider the spectral side of the pre-Kuznetsov trace formula in Proposition \ref{pre-Kuznetsov-trace-formula}.
Integrating in $r$ on $\R^\times_+$, by (\ref{temp-produc-of-Kbessel-intgral}) we have
\bna
2\int_{0}^\infty K_{it_\pi}(2\pi r)\overline{K_{it_\pi}(2\pi r)}rd^\times r
=\frac{1}{\pi}\int_{0}^\infty K_{it_\pi}(r)K_{-it_\pi}(r)rd^\times r
=\frac{1}{4\cosh(\pi t_\pi)}.
\ena
It gives
\bea
\text{Spectral side}=\frac{1}{4}\sum_{\pi\in\mathcal A(0,N^3,M)}
h(t_\pi)
\frac{\lambda_\pi(m_1)\lambda_\pi(m_2)}{L_{\mathrm{fin}}(1,\pi,\mathrm{sym}^2)}.
\label{spectral-Kuz}
\eea

\subsubsection{The diagonal term on the geometric side}
Note that $f_0\in C_c^\infty(GL(2,\R)^+,Z_\infty K_\infty)$.
By Proposition 3.4 in \cite{KnLi2013},
for $g=\bma a&b\\c&d\ema\in GL(2,\R)^+$,
 there exists a function $V\in C_c^\infty([0,\infty))$ such that
\bna
V(u)=V\left(\frac{a^2+b^2+c^2+d^2}{ad-bc}-2\right)=f_0\bma a&b\\c&d\ema.
\ena
By \cite[Proposition 3.7]{KnLi2013}, we have
\bna
V(u)=\frac{1}{4\pi}\int_{-\infty}^\infty P_{-\frac{1}{2}+it}(1+\frac{u}{2})
h(t)\tanh(\pi t)tdt
\ena
and $$V(0)=\frac{1}{4\pi}
\int_{-\infty}^\infty h(t)\tanh (\pi t) tdt,$$
where $P_s(z)$ is the  Legendre polynomial (see \cite[(1.43)]{Iw2002-spectral-method}). It gives (see \cite[Page 73]{KnLi2013})
\bna
\int_0^\infty
\left\{\int_{-\infty}^\infty f_0\bma 1 &xr^{-1}\\&1\ema e^{2\pi i x}dx \right\}d^\times r
=\int_0^\infty\int_{-\infty}^\infty V\left(\frac{x^2}{r^2}\right) e^{2\pi i x} dxd^\times r
=\frac{1}{2}V(0),
\ena
and thus
\bea
\text{Diagonal term}
=\delta(n_1,n_2)N^2\frac{1}{8\pi^2}\int_{-\infty}^\infty h(t)\tanh(\pi t) tdt.\label{diagonal-term-Kuz}
\eea

\subsubsection{The non-diagonal term on the geometric side}
For the non-diagonal term on the geometric side,
one needs to deal with the integral
\bna
\mathcal I:=\int_{0}^\infty \left\{
\int_{\R}\int_{\R}f_0
\bma-x&-r^{-1}(\frac{n_1n_2}{N^4c^2}+xy)\\
r&y\ema e^{2\pi i(y-x)}dxdy
\right\}d^\times r
\ena
which has been discussed in \cite[Proposition 7.1]{KnLi2013}.
We list the result in the following lemma.
\begin{lemma}One has
\bna
\mathcal I
=\frac{i}{4}\frac{\sqrt{n_1n_2}}{N^2c}
\int_{-\infty}^\infty J_{2it}\left(4\pi\frac{\sqrt{n_1n_2}}{N^2c}\right)\frac{h(t)}{\cosh (\pi t)}dt.
\ena
\end{lemma}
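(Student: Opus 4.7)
Set $\beta:=\sqrt{n_1n_2}/(N^2c)$, so that the matrix appearing inside $f_0$ has determinant $\beta^2$ and lies in $GL(2,\R)^+$. The integral $\mathcal I$ is structurally identical to the one treated in \cite[Proposition 7.1]{KnLi2013}, differing only through the replacement of their scale $\sqrt{n_1n_2}/c$ by $\beta$, so the plan is to transport their argument. First use the bi-$Z_\infty K_\infty$-invariance of $f_0$ to rewrite it via the radial profile $V\in C_c^\infty([0,\infty))$ supplied by \cite[Proposition 3.4]{KnLi2013}, namely $f_0(g) = V\bigl((a^2+b^2+c^2+d^2)/\det g - 2\bigr)$ for $g=\bma a&b\\c&d\ema$. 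Substituting $g=\bma -x & -r^{-1}(\beta^2+xy)\\ r & y\ema$ gives
\bna
\mathcal I = \int_0^\infty\int_\R\int_\R V\!\left(\frac{x^2+y^2+r^2+r^{-2}(\beta^2+xy)^2}{\beta^2}-2\right)e^{2\pi i(y-x)}\, dx\, dy\, d^\times r.
\ena

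Next, insert the Harish--Chandra--Mehler inversion of \cite[Proposition 3.7]{KnLi2013},
\bna
V(u) = \frac{1}{4\pi}\int_{-\infty}^\infty P_{-1/2+it}(1+u/2)\, h(t)\tanh(\pi t)\, t\, dt,
\ena
and interchange the $t$-integral with the $(x,y,r)$-integral; this exchange is legitimate because $h$ enjoys Paley--Wiener decay and $V$ has compact support. After a convenient rescaling such as $x\mapsto rx$, $y\mapsto ry$, the inner triple integral depends only on $\beta$ and $t$, and matches the Bessel-type identity that forms the technical core of \cite[Proposition 7.1]{KnLi2013}:
\bna
\int_0^\infty\int_\R\int_\R P_{-1/2+it}\!\left(1+\frac{u}{2}\right)e^{2\pi i(y-x)}\, dx\, dy\, d^\times r = \frac{i\pi\beta}{t\sinh(\pi t)}\, J_{2it}(4\pi\beta).
\ena
Substituting this back and using $\tanh(\pi t)/\sinh(\pi t)=1/\cosh(\pi t)$ collapses the expression to the claimed formula $\mathcal I = \tfrac{i\beta}{4}\int_\R h(t)J_{2it}(4\pi\beta)/\cosh(\pi t)\, dt$.

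The main obstacle is the displayed Bessel identity. Its verification proceeds by inserting Mehler's integral representation of $P_{-1/2+it}$, performing the $x$- and $y$-integrals as oscillatory Gaussians after completing the square, and recognizing the remaining $r$-integral as one of the standard integral representations of $J_{2it}$. Since nothing in this chain depends on the precise value of $\beta>0$, the identity transports verbatim from Knightly--Li's setting to ours with $\beta=\sqrt{n_1n_2}/(N^2c)$, yielding the lemma.
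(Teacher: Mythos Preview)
Your proposal is correct and aligns with the paper's treatment: the paper itself gives no independent proof but simply cites \cite[Proposition~7.1]{KnLi2013}, and you do the same while additionally sketching how that proposition is established. The only cosmetic issue is that in your displayed Bessel identity the symbol $u$ is used without being redefined locally, but from context it is clearly the argument $(x^2+y^2+r^2+r^{-2}(\beta^2+xy)^2)/\beta^2-2$ introduced just above.
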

By the above lemma, the non-diagonal term on the geometric side is
\bea
\text{Non-Diag}=\frac{i}{4}
\sum_{c\geq 1}\frac{A_{N,M}(c)}{c}S(n_1,n_2,N^2c)
\int_{-\infty}^{+\infty}\frac{h(t)t}{\cosh(\pi t)}J_{2it}\left(\frac{4\pi\sqrt{n_1n_2}}{N^2c}\right)dt.
\label{non-diag-Kuz}
\eea

By \eqref{spectral-Kuz}, \eqref{diagonal-term-Kuz} and \eqref{non-diag-Kuz}, we
can establish the formula in Prposition \ref{prop-sKTF-cuspidal} for those  $h(iz)\in \mathrm{PW}^\infty(\C
)^{\mathrm{even}}$.
The work of Knightly-Li in \cite[Section 8]{KnLi2013} indicated
that the formula is also valid for the function $h(t)$ which satisfies the conditions in
Proposition \ref{prop-sKTF-cuspidal}. This proves Proposition \ref{prop-sKTF-cuspidal}.

\section{Non-vanishing of modular $L$-values}\label{sec:non-vanishing}
\setcounter{equation}{0}

Combining the first and the second moments together with the mollification method is the classical and effective approach to study the non-vanishing problems. Traditionally, the approximate functional equation plays an important role in calculating the first and second moments. Recently, Bykovskii-Frolenkov \cite{BF2} and Balkanova-Frolenkov \cite{BF} derived a new method and got better understanding of error terms in the asymptotic formulas for the first and second moments. In this section,
we combine the simple Petersson trace formula in Theorem \ref{thm-sPTF} and the method in \cite{BF2, BF} to establish the asymptotic formulas for the first and the second moments of modular $L$-values, and prove Theorem \ref{nonvanishing}.

To be more precisely, for a natural number $m$ with $(m,N)=1$,
we let
\bna
M_{1}(m, u)&=&\sum_{\pi\in\mathcal A(2k,N^3)}\frac{\lambda_\pi(m) L_{\mathrm{fin}}(\frac{1}{2}+u,\pi)}{L_{\mathrm{fin}}(1,\pi,\mathrm{sym}^2)},\\
M_{2}(m, u)&=&\sum_{\pi\in\mathcal A(2k,N^3)}\frac{\lambda_\pi(m) L_{\mathrm{fin}}(\frac{1}{2}+u,\pi)^2}{L_{\mathrm{fin}}(1,\pi,\mathrm{sym}^2)}
\ena
To serve our purpose, we establish the following asymptotic formulas
\bea
M_1(m,0)&=&\frac{(2k-1)N^2\varphi(N)}{2\pi^2\sqrt{m}}+ O_k\left(3^{\Omega(N)}m^{1/2+\epsilon}\right),
\label{M-1-result} \\
M_2(m,0)&=&
\frac{(2k-1)N\varphi^2(N)}{2\pi^2\sqrt{m}}\tau(m)
\left(\log\frac{ N^3}{m}+2g_{k}(N)
\right)+
 O_k(m^{1/2}(mN)^\epsilon)\label{M-2-result}.
\eea
Here
$\Omega(N)$ denotes the number of prime factors of $N$ and
\bea
g_k(N)=\sum_{p\mid N}\frac{\log p}{p-1}+\frac{\Gamma'(k)}{\Gamma(k)}+\gamma_0-\log(2\pi)
\label{g-k-N}.
\eea
These formulas together with the mollification method lead to the non-vanishing result in Theorem \ref{nonvanishing}.

The proofs of \eqref{M-1-result} and \eqref{M-2-result}
follow from \cite{BF1,BF} with suitable modifications.
We give a sketch of the proof of \eqref{M-2-result} in the following section.

\subsection{Asymptotic formula for the second moment}\label{subsec:Secondmoment}
Consider $M_2(m,u)$. For $\re(u)>3/4$, by Ramanujan-Deligne's bound
and the Hecke relation, we have
\begin{align*}
M_2(m,u)
&=\frac{1}{m^{1/2+u}}\sum_{d\mid m}d^{1/2+u}\sideset{}{^N}\sum_{n_1\geq 1}
\sideset{}{^N}\sum_{n_2\geq 1}
\frac{1}{(n_1n_2)^{1/2+u}}
\sum_{\pi\in\mathcal A({2k, N^3})}
\frac{\lambda_\pi\left(dn_1\right)\lambda_\pi(n_2)}
{L_{\mathrm{fin}}(1,\pi,\mathrm{sym}^2)}.
\end{align*}
Applying Theorem \ref{thm-sPTF}
we obtain
\bna
M_2(m,u)
=\mathcal D_2(m,u)+\mathcal{ND}_2(m,u),
\ena
where
\begin{align*}
\mathcal D_2(m,u)&=\frac{(2k-1)N^2\varphi(N)}{2\pi^2}
\frac{1}{m^{1/2+u}}
\tau(m)\zeta^{(N)}(1+2u),\\
\mathcal {ND}_2(m,u)&=\frac{1}{m^{1/2+u}}\frac{(-1)^k(2k-1)}{\pi}
\sideset{}{^N}\sum_{n_1\geq 1}
\sideset{}{^N}\sum_{n_2\geq 1}
\frac{1}{(n_1n_2)^{1/2+u}}\sum_{d\mid m}d^{1/2+u}\\
&\quad\times
\sum_{c\geq 1}\frac{A_N(c)}{c}J_{2k-1}\left(\frac{4\pi\sqrt{dn_1n_2}}{N^2c}\right)
S(dn_1,n_2;N^2c).
\end{align*}

Clearly, we have the following proposition.
\begin{prop}\label{prop-D-2} $\mathcal D_{2}(m,u)$ has meromorphic continuation for $u\in\C$
except for a simple pole at $u=0$. Moreover,
\bna
\mathcal D_{2}(m,u)=\frac{1}{u}c_{-1}+ c_{0,1}+h_1(u),
\ena
where $h_1(u)$ is an analytic function in $u$ with $h_1(u)=0$, $c_{-1}$ and $c_{0,1}$ are constants given by
\bea
c_{-1}&=&
\frac{(2k-1)N\varphi^2(N)}{4\pi^2}
\frac{\tau(m)}{m^{1/2}}\label{constant--1},\\
c_{0,1}&=&
\frac{(2k-1)N\varphi^2(N)}{2\pi^2}
\frac{\tau(m)}{m^{1/2}}\left\{\gamma_0-\frac{1}{2}\log m+\sum_{p\mid N}\frac{\log p}{p-1}\right\}.\nonumber
\eea
\end{prop}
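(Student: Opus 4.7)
The statement is essentially a Laurent expansion of a concrete meromorphic function, so the plan is to peel apart the factors of $\mathcal D_2(m,u)$ and Taylor-expand each one at $u=0$.

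First I would unfold the notation: $\zeta^{(N)}(1+2u)=\zeta(1+2u)\prod_{p\mid N}(1-p^{-1-2u})$. Since the finite Euler product $\prod_{p\mid N}(1-p^{-1-2u})$ is entire and $m^{-1/2-u}$ is entire in $u$, the meromorphic continuation of $\mathcal D_2(m,u)$ to $\mathbb C$ is inherited directly from that of $\zeta(s)$. The Riemann zeta function has its only pole at $s=1$, which corresponds here to $u=0$, so on any finite region $\mathcal D_2(m,u)$ has a single simple pole located at $u=0$, and the assertion $h_1(0)=0$ will just be a renaming of the higher-order terms in the Laurent expansion.

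Second, I would expand each factor at $u=0$ up to order $u$:
\begin{align*}
\zeta(1+2u)&=\tfrac{1}{2u}+\gamma_0+O(u),\\
m^{-1/2-u}&=m^{-1/2}\bigl(1-u\log m+O(u^2)\bigr),\\
\prod_{p\mid N}(1-p^{-1-2u})&=\frac{\varphi(N)}{N}\Bigl(1+2u\sum_{p\mid N}\frac{\log p}{p-1}+O(u^2)\Bigr),
\end{align*}
where the last expansion is obtained by logarithmic differentiation of the product, using $\frac{d}{du}\log(1-p^{-1-2u})\bigl|_{u=0}=\frac{2\log p}{p-1}$, and the fact that $\prod_{p\mid N}(1-p^{-1})=\varphi(N)/N$.

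Third, I would multiply the three expansions together, absorb the overall prefactor $(2k-1)N^2\varphi(N)\tau(m)/(2\pi^2)$, and collect the $u^{-1}$ and $u^0$ coefficients. The $u^{-1}$ contribution comes only from the $\frac{1}{2u}$ of $\zeta(1+2u)$ paired with the constant terms of the other factors, giving exactly $c_{-1}=\frac{(2k-1)N\varphi^2(N)}{4\pi^2}\,\tau(m)m^{-1/2}$. The $u^0$ contribution is the sum of three products: $\gamma_0$ from $\zeta$, $-\frac{1}{2}\log m$ from the $m^{-u}$ expansion paired with $\frac{1}{2u}$, and $\sum_{p\mid N}\frac{\log p}{p-1}$ from the Euler product paired with $\frac{1}{2u}$, all multiplied by the prefactor $\frac{(2k-1)N\varphi^2(N)}{2\pi^2}\tau(m)m^{-1/2}$. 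This reproduces $c_{0,1}$ verbatim.

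There is no real obstacle here; the proof is a bookkeeping exercise once the expansions are in place. The only point worth being careful about is the shift $s=1+2u$, which introduces the factor of $2$ both in the residue of $\zeta$ and in the logarithmic derivative of the Euler product; this is why $\gamma_0$ (and not $\gamma_0/2$) appears in $c_{0,1}$ while $\log m$ does carry a factor of $\tfrac12$.
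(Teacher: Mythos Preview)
Your proposal is correct and is exactly the computation the paper has in mind; the paper itself gives no proof beyond the word ``Clearly,'' so your Laurent expansion of $\zeta^{(N)}(1+2u)\,m^{-1/2-u}$ at $u=0$ is the intended argument with the details filled in. You also correctly interpreted the typo $h_1(u)=0$ as $h_1(0)=0$.
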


$\mathcal{ND}_2(m,u)$ is much more complicated and we follow the approach
 in \cite{BF1}.
\begin{prop}\label{prop:ND-2}
$\mathcal{ND}_2(m,u)$ has meromorphic continuation for $u\in \C$ except for the case $u=0$ which is a simple pole. Moreover,
\bna
\mathcal {ND}_2(m,u)=
-c_{-1}\frac{1}{u}+ c_{0,2}+ h_2(u)+\mathcal {ND}_2^{1}(m,u)
\ena
where $ h_2(u)$ is an analytic function in $u$ with $ h_2(0)=0$, $c_{-1}$
is in \eqref{constant--1},  $ c_{0,2}$ is given by
\bna
 c_{0,2}
&=&\frac{(2k-1)N\varphi^2(N)}{2\pi^2\sqrt{m}}\tau(m)
\left\{\gamma_0-\log \frac{4\pi^2}{N^2}-\frac{1}{2}\log m
+\sum_{p\mid N}\frac{p\log p}{p-1}+2\frac{\Gamma'(k)}{\Gamma(k)}\right\}
\ena
and $\mathcal{ND}_2^1(m,u)$ satisfies
\bna
\mathcal {ND}_{2}^1(m,0)\ll
(kmN)^\epsilon k^2 m^{1/2}.
\ena

\end{prop}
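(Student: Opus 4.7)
The strategy, following closely \cite{BF1, BF}, is to extract the main terms of $\mathcal{ND}_2(m,u)$ by an analytic continuation in $u$, carried out via a Mellin--Barnes representation for the Bessel function combined with a Voronoi/Estermann-type transformation of the inner Kloosterman-weighted Dirichlet series. I would start in a half-plane $\re(u) > 3/4$ where, by the Weil bound on Kloosterman sums and the power-series bound on $J_{2k-1}$ near $0$, the triple sum defining $\mathcal{ND}_2(m,u)$ converges absolutely. I would then invert the order of summation so that the $n_1, n_2$ sums sit innermost for each fixed $c$ and $d$.

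The next step is to insert the Mellin--Barnes formula
\begin{equation*}
J_{2k-1}(x) = \frac{1}{2\pi i}\int_{(\sigma)} \frac{\Gamma(k-1/2-s)}{\Gamma(k+1/2+s)}\left(\frac{x}{2}\right)^{2k-1+2s} ds,
\end{equation*}
with $\sigma$ chosen so that the integral converges; this decouples $n_1$ from $n_2$ inside the Bessel factor so that the inner double sum becomes essentially the Estermann zeta function attached to $S(dn_1, n_2; N^2 c)$. Using the standard Ramanujan-type expansion of Kloosterman sums and splitting the modulus $N^2 c$ via the Chinese Remainder Theorem into the $N$-part (handled by $A_N(c)$) and the coprime part, the $n_1, n_2, c$ sums collapse to products of Riemann zeta values at shifted arguments, a finite Euler product at primes dividing $N$, and a Mellin--Barnes integral in $s$ with explicit archimedean Gamma factors.

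I would then shift the contour in $s$ to capture the rightmost pole of the integrand in the region of interest; the functional equation of $\zeta$ (reflected through the Mellin--Barnes kernel) produces a pole whose residue encodes the $u$-pole at $u=0$. A direct residue computation, followed by Laurent expansion at $u=0$, yields the principal part $-c_{-1}/u$ (which will cancel against the pole of $\mathcal D_2$ in Proposition \ref{prop-D-2}) and identifies the constant $c_{0,2}$: the factor $\log(4\pi^2/N^2)$ comes from the argument of the Bessel function, the $\Gamma'(k)/\Gamma(k)$ from the archimedean Gamma ratio at the pole, the term $\sum_{p\mid N} \frac{p\log p}{p-1}$ from the local Euler product over $p\mid N$, and $\gamma_0$ from the standard Laurent expansion of $\zeta(1+2u)$.

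The remaining shifted-contour integral defines $\mathcal{ND}_2^1(m,u)$; to obtain $\mathcal{ND}_2^1(m,0) \ll (kmN)^\epsilon k^2 m^{1/2}$ one estimates trivially using Weil's bound $|S(a,b;q)| \leq \tau(q)\sqrt{q}\gcd(a,b,q)^{1/2}$ together with the uniform bound $J_{2k-1}(x) \ll \min(x/k, (x/k)^{2k-1}/(2k-1)!)$ on the Bessel function. The main obstacle is the bookkeeping in the residue calculation and, especially, proving the uniform polynomial-in-$k$ dependence: the Mellin--Barnes kernel has Gamma factors that grow with $k$, and one must balance the contour shift against the decay of the Bessel integral in $s$ so that the resulting estimate is indeed $k^2$ and not larger. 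Aside from this delicate uniformity, all other ingredients are routine adaptations of \cite{BF1, BF} to the cubic-level setting enabled by Theorem \ref{thm-sPTF}.
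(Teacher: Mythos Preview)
Your overall architecture matches the paper's: Mellin--Barnes for $J_{2k-1}$, analytic continuation of the inner Dirichlet series following \cite{BF1}, and extraction of the pole at $u=0$. Two points deserve correction.

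First, a minor omission: before inserting Mellin--Barnes the paper removes the coprimality constraint $(n_1n_2,N)=1$ by M\"obius inversion over $\ell\mid N$ (using \cite[Lemma~A.12]{Roy}); this is what produces the sum over $\ell$ and eventually the Euler factor $\sum_{p\mid N}\frac{p\log p}{p-1}$ in $c_{0,2}$. Your CRT description of ``splitting the modulus'' is not how this step actually goes.

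Second, and more seriously, your treatment of $\mathcal{ND}_2^1(m,u)$ does not match the paper and would not give the stated bound. In the paper $\mathcal{ND}_2^1$ is \emph{not} a residual contour integral to be bounded by Weil plus elementary Bessel estimates. After the Voronoi-type continuation of $\mathcal T_{N,d\ell}(u,s)$ (via \cite[Lemmas~6.5--6.7]{BF1}) one obtains dual sums over integers $a$; the $a=0$ term is $\mathcal{ND}_2^0$, carrying the pole and $c_{0,2}$, while the $a\neq 0$ terms form $\mathcal{ND}_2^1$. The remaining $s$-integrals $I_2,I_3$ are evaluated in closed form as Gauss hypergeometric functions ${}_2F_1(k-u,k-u,2k;\cdot)$, and the resulting sums are exactly the quantities $\tilde V_{N^2/\ell}(m)$, which the paper bounds by reducing to $V_{cN^2/\ell}(m)$ and invoking \cite[Theorem~8.4]{BF1}. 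That theorem is where the uniform $k^{1+\epsilon}$ input (hence $k^2$ after the prefactor $(2k-1)$) and the $m^{1/2}/(N^2/\ell)$ saving come from; a direct Weil-plus-Bessel estimate on the original variables faces a divergent $\sum n^{-1/2}$ at $u=0$ and will not recover this. You correctly flagged the $k$-uniformity as the crux, but the resolution is the hypergeometric representation plus \cite[Theorem~8.4]{BF1}, not a trivial bound.
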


\begin{proof}
To prove Proposition \eqref{prop:ND-2},
we apply \cite[Lemma A.12]{Roy}
and Mobius inversion to remove the  coprime condition $(n_1n_2,N)=1$
and get
\begin{align*}
\mathcal {ND}_2(m,u)
=&\frac{1}{m^{1/2+u}}\frac{(-1)^k(2k-1)}{\pi}
\sum_{\ell\mid N}\frac{\mu(\ell)}{\ell^{\frac{1}{2}+u}}\sum_{d\mid m}d^{1/2+u}\\
&\quad\times\sum_{n_1\geq 1}
\sum_{n_2\geq 1}
\frac{1}{(n_1n_2)^{1/2+u}}
\sum_{c\geq 1}\frac{A_N(c)}{c}J_{2k-1}\left(\frac{4\pi\sqrt{d\ell n_1n_2}}{N^2c}\right)
S(d\ell n_1,n_2;N^2c).
\end{align*}
Next, we recall
Mellin-Barnes representation
 of $J$-Bessel function (see \cite[(17)]{GaHoSe2009} for instance).
 For any $1-2k<\alpha<0$,
\bea
J_{2k-1}\left(\frac{4\pi\sqrt{mn}}{N^2c}\right)=\frac{1}{4\pi i}\int_{\alpha-i\infty}^{\alpha+i\infty}
\frac{\Gamma\left(\frac{2k-1+s}{2}\right)}
{\Gamma\left(\frac{2k+1-s}{2}\right)}\left(\frac{2\pi\sqrt{mn}}{N^2c}\right)^{-s}ds.
\label{eq:Bessel-inverse}
\eea
Then, for $\max\{1-2k,1-2\re(u)\}<\alpha<0$, by \eqref{eq:Bessel-inverse},
 we have
\begin{align*}
\mathcal {ND}_2(m,u)
=&\frac{1}{m^{1/2+u}}\frac{(-1)^k(2k-1)}{\pi}
\sum_{\ell \mid N}\frac{\mu(\ell )}{\ell^{\frac{1}{2}+u}}
\sum_{d\mid m}d^{\frac{1}{2}+u}\\
&\quad\times
\frac{1}{4\pi i}\int_{\alpha-i\infty}^{\alpha+i\infty}
\frac{\Gamma\left(\frac{2k-1+s}{2}\right)}
{\Gamma\left(\frac{2k+1-s}{2}\right)}
\frac{N^{2s}}{(2\pi)^{s}(\ell d) ^{\frac{s}{2}}}
\mathcal T_{N,d\ell }(u,s)ds,
\end{align*}
where
\bna
\mathcal T_{N,d\ell }(u,s)=
\sum_{n_1\geq 1}
\sum_{n_2\geq 1}
\frac{1}{(n_1n_2)^{\frac{1+s}{2}+u}}
\sum_{c\geq 1}\frac{A_N(c)}{c^{1-s}}S(d\ell n_1,n_2;N^2c).
\ena

Following the proofs of \cite[Lemma 6.5-6.7]{BF1},
we get that
$\mathcal T_{N,d\ell }(u,s)$  has analytic continuation for $\frac{3}{4}<\re(u)<k-1$, and,
for $1-2k<\re(s)<-1-2\re(u)$,
\bna
\mathcal T_{N,d\ell }(u,s)
&=&2N^{2-2s-4u}\frac{\Gamma^2\left(\frac{1-s}{2}-u\right)}{(2\pi)^{1-s-2u}}
\left\{
\sin\left(\pi\left(\frac{s}{2}+u\right)\right)P^-_{N,d\ell}(u,s)+
P^+_{N,d\ell}(u,s)
\right\},
\ena
where
\bna
P^{\pm}_{N,d\ell}(u,s)=
\frac{1}{\ell^{\frac{1-s}{2}-u}}\frac{1}{d^{u+\frac{1-s}{2}}}
\sum_{r\mid d}\mu(d/r)r^{u+\frac{1-s}{2}}
\sum_{a\geq \frac{1\pm r}{N^2/\ell}}
\frac{\tau\left(a\frac{N^2}{\ell}\mp r\right)}
{\left(a\frac{N^2}{\ell}\mp r\right)^{\frac{1-s}{2}-u}}
\sum_{c\mid a}\frac{A_N(c)}{c^{2u}}.
\ena

Note that $\sum_{c\mid a}\frac{A_N(c)}{c^{2u}}$ is a finite sum except for the case $a=0$.
We can express $\mathcal {ND}_{2}(m,u)$ as
\bna
\mathcal {ND}_{2}(m,u)=
\mathcal {ND}_{2}^0(m,u)+\mathcal {ND}_{2}^1(m,u),
\ena
where
\bna
\mathcal {ND}_{2}^0(m,u)
=\frac{(-1)^k(2k-1)N^{2}}{2\pi^2m^{1/2}}
\left(\frac{4\pi^2}{mN^4}\right)^{u}
\sum_{\ell \mid N}\frac{\mu(\ell )}{\ell}
\sum_{d\mid m}\left(\sum_{r\mid d}\mu(d/r)r^{2u}
\tau\left(r\right)\right)
\left(\sum_{c\geq 1}\frac{A_N(c)}{c^{2u}}\right)
I_1(u;k),
\ena
and
\bna
\mathcal {ND}_{2}^1(m,u)&=&
\frac{(-1)^k(2k-1)N^{2}}{2\pi^2m^{1/2}}
\left(\frac{4\pi^2}{mN^4}\right)^{u}
\sum_{\ell \mid N}\frac{\mu(\ell )}{\ell}
\sum_{d\mid m}
\sum_{r\mid d}\mu(d/r)r^{\frac{1}{2}+u}\\
&&\times\left\{\sum_{a\geq \frac{1-r}{N^2/\ell}\atop{a\neq 0}}
\frac{\tau\left(a\frac{N^2}{\ell}+r\right)}
{\left(a\frac{N^2}{\ell}+r\right)^{\frac{1}{2}-u}}
\sum_{c\mid a}A_N(c)
I_2\left(u,k,\left(a\frac{N^2}{\ell r}+1\right)\right)\right.\\
&&\quad\left.+\sum_{a\geq \frac{1+r}{N^2/\ell}}
\frac{\tau\left(a\frac{N^2}{\ell}-r\right)}
{\left(a\frac{N^2}{\ell}-r\right)^{\frac{1}{2}-u}}
\sum_{c\mid a}A_N(c)I_3\left(u,k,\left(a\frac{N^2}{\ell r}-1\right)\right)
\right\}.
\ena
Here
\bna
I_1(u,k)&=&\frac{1}{2\pi i}\int_{\alpha-i\infty}^{\alpha+i\infty}
\frac{\Gamma\left(\frac{2k-1+s}{2}\right)}
{\Gamma\left(\frac{2k+1-s}{2}\right)}\Gamma^2\left(\frac{1-s}{2}-u\right)
\sin\left(\pi\left(\frac{s}{2}+u\right)\right)ds,\\
I_2(u;k,z_+)&=&\frac{1}{2\pi i}\int_{\alpha-i\infty}^{\alpha+i\infty}
\frac{\Gamma\left(\frac{2k-1+s}{2}\right)}
{\Gamma\left(\frac{2k+1-s}{2}\right)}\Gamma^2\left(\frac{1-s}{2}-u\right)
\sin\left(\pi\left(\frac{s}{2}+u\right)\right)
z_+^{s/2}ds,\\
I_3(u,k,z_-)
&=&
\frac{1}{2\pi i}\int_{\alpha-i\infty}^{\alpha+i\infty}
\frac{\Gamma\left(\frac{2k-1+s}{2}\right)}
{\Gamma\left(\frac{2k+1-s}{2}\right)}
\Gamma^2\left(\frac{1-s}{2}-u\right)
z_-^{\frac{s}{2}}
ds
\ena
with $\alpha$ and $\re(u)$ satisfying
$
1-2k<\alpha<-1-2\re(u)$. By \cite[Lemma 3.6]{BF1}, we have
\begin{align*}
I_1(u,k)
=&2\cos(\pi(k-u))\Gamma(2u)\frac{\Gamma^2(k-u)}{\Gamma^2(k+u)},\\
I_2(u,k,z_+)
=&2\frac{\cos\pi(k-u)}{z_+^{k-\frac{1}{2}}}
\frac{\Gamma^2(k-u)}{\Gamma(2k)}{_2F_1\left(k-u,k-u,2k,\frac{1}{z_+}\right)},\\
I_3(u,k,z_-)
=&\frac{2}{z_-^{k-\frac{1}{2}}}\frac{\Gamma^2(k-u)}{\Gamma(2k)}
{_2F_1\left(k-u,k-u,2k,\frac{-1}{z_-}\right)},
\end{align*}
where ${_2F_1(\alpha,\beta,\gamma,z)}$ is the Gauss Hypbergeometric function.

Note that
\bna
\sum_{c\geq 1}\frac{A_N(c)}{c^{2u}}=\zeta(2u)
\prod_{p\mid N}\left(\frac{1}{p^{2u-1}}-1\right).
\ena
By the above argument, $\mathcal {ND}_2^0(u,s)$ has meromorphic continuation
for $u\in \C$ except for a simple pole at $u=0$ and
\bna
\mathcal {ND}_2^0(m,u)=-\frac{1}{u} c_{-1}+ c_{0,2}+ h_2(u),
\ena
where $c_{-1}$, $c_{0,2}$ and $h_2(u)$ are  in Proposition \ref{prop:ND-2}.

For $\mathcal {ND}_2^1(m,u)$, it has analytic continuation for  $u\in\C$
and
\bna
\mathcal {ND}_2^1(m,0)=\frac{(2k-1)N^{2}}{2\pi^2}
\sum_{\ell \mid N}\frac{\mu(\ell )}{\ell}
\tilde V_{N^2/\ell}(m)
\ena
where
\bea
\tilde V_{N^2/\ell}(m)=\frac{2(-1)^k}{m^{1/2}}
\sum_{d\mid m}\sum_{r\mid d}\mu(d/r)
(\tilde V_{N^2/\ell,1}(r)+\tilde V_{N^2/\ell,2}(r)+\tilde V_{N^2/\ell,3}(r))
\label{tildeV_N2-l-m}
\eea
with
\begin{align*}
\tilde V_{N^2/\ell,1}(r)=&r^k\sum_{a\geq \frac{1+r}{N^2/\ell}}
\frac{\tau\left(a\frac{N^2}{\ell}-r\right)}
{\left(a\frac{N^2}{\ell}-r\right)^{k}}
\sum_{c\mid a}A_N(c)\frac{\Gamma^2(k)}{\Gamma(2k)}
{_2F_1\left(k,k,2k;\frac{-r}{a\frac{N^2}{\ell}-r}\right)},\\
\tilde V_{N^2/\ell,2}(r)=&(-r)^k
\sum_{\frac{1-r}{N^2/\ell}\leq a \leq -1}
\frac{\tau\left(a\frac{N^2}{\ell}+r\right)}
{\left(a\frac{N^2}{ \ell}+r\right)^{k}}
\sum_{c\mid |a|}A_N(c)
\frac{\Gamma^2(k)}{\Gamma(2k)}{_2F_1\left(k,k,2k,\frac{r}{a\frac{N^2}{\ell}+r}\right)},\\
\tilde V_{N^2/\ell,3}(r)=&(-r)^k
\sum_{ a\geq 1}
\frac{\tau\left(a\frac{N^2}{\ell}+r\right)}
{\left(a\frac{N^2}{ \ell}+r\right)^{k}}
\sum_{c\mid a}A_N(c)
\frac{\Gamma^2(k)}{\Gamma(2k)}{_2F_1\left(k,k,2k,\frac{r}{a\frac{N^2}{\ell}+r}\right)}.
\end{align*}

For $\tilde V_{N^2/\ell}(m)$ in \eqref{tildeV_N2-l-m}, we have
\bna
\tilde V_{N^2/\ell}(m) \ll\sum_{c\mid N}\varphi(c) V_{cN^2/\ell}(m)
\ena
where $V_{cN^2/\ell}(m)$ is defined in \cite[(6.15)]{BF1}.
By \cite[Theorem 8.4]{BF1}, we have
\bna
\tilde V_{N^2/\ell}(m)
\ll k^{1+\epsilon} (mN)^\epsilon \frac{m^{1/2}}{N^2/\ell}.
\ena
The estimation of $\mathcal {ND}_{2}^1(m,0)$ in Proposition \ref{prop:ND-2} follows immediately.
\end{proof}
\subsection{The mollification method}\label{sec:mollifier}
For $\pi\in\mathcal A(2k,N^3)$, we choose the standard mollifier
\bna
X(\pi)=\sideset{}{^N}\sum_{m\leq M}\frac{x_m\lambda_\pi(m)}{\sqrt{m}},
\ena
where $M=N^\Delta$ with $\Delta$ the length of the mollifier, and $x_m$
are real coefficients defined on square-free numbers $m$ with $(m,N)=1$
and satisfies
\bna
x_m\ll (\tau(m)\log N)^2.
\ena

Let
 $\omega_\pi$ be the weight given by
 \bea
\omega_\pi^{-1}=\frac{(2k-1)N^2\varphi(N)}{2\pi^2}L_{\mathrm{fin}}(1,\pi,\mathrm{sym}^2)\label{harmonic-weight}.
 \eea
We consider the first and second mollified moments,
\begin{align*}
\tilde M_1^h=&\sum_{\pi\in\mathcal A({2k, N^3})}\omega_\pi
L_{\mathrm{fin}}\left(1/2,\pi\right)X(\pi),\\
\tilde M_2^h=&\sum_{\pi\in\mathcal A(2k, N^3)}\omega_\pi
L_{\mathrm{fin}}^2\left(1/2,\pi\right)X^2(\pi).
\end{align*}
By \eqref{M-1-result} and \eqref{M-2-result} we have
\bna
\tilde M_1^h
&=&
\sideset{}{^N}\sum_{m\leq M}\frac{x_m}{m}
+ O_k\left(N^{-3+\Delta+\epsilon}\right),\\
\tilde M_2^h&=&\frac{\varphi(N)}{N}\sideset{}{^N}\sum_{d\leq M}
\sideset{}{^N}\sum_{m_1\leq \frac{M}{d}}
\sideset{}{^N}\sum_{m_2\leq \frac{M}{d}}
\frac{x_{dm_1}x_{dm_2}}{dm_1m_2}\tau(m_1m_2)S(m_1m_2)
+ O_k\left(N^{-3+2\Delta+\epsilon}\right),
\ena
where
\bna
S(m_1m_2)=\log \frac{N^3}{m_1m_2}+2g_k(N)
\ena
with $g_k(N)$ in \eqref{g-k-N}. Here $M=N^{\Delta}$ with
$0<\Delta\leq \frac{3}{2}-\epsilon$.

We recall the general principle on the mollification method in \cite{Ko1998,Rou}.
By
\bna
\tau(m_1m_2)=\sum_{\ell\mid(m_1,m_2)}\mu(\ell)\tau(m_1/\ell)\tau(m_2/\ell),
\ena
we express
the mollified second moment as
\bna
\tilde M_2^h=\frac{\varphi(N)}{N}(\Pi-2\Pi_{0,1}-2 \Pi_{1,0})+O_k\left(N^{-3+2\Delta+\epsilon}\right),
\ena
where
\bna
\Pi&=&\left(3\log N+2g_k(N)\right)
\sideset{}{^N}\sum_{n\leq M}\mathfrak S(n)
\sideset{}{^N}\sum_{m_1\leq\frac{M}{n}}
\sideset{}{^N}\sum_{m_2\leq\frac{M}{n}}
\frac{\tau(m_1)\tau(m_2)}{m_1m_2}x_{m_1n}x_{m_2n},\\
\Pi_{0,1}&=&\sideset{}{^N}\sum_{n\leq M}\mathfrak S(n)
\sideset{}{^N}\sum_{m_1\leq\frac{M}{n}}
\sideset{}{^N}\sum_{m_2\leq\frac{M}{n}}
\frac{\tau(m_1)\tau(m_2)}{m_1m_2}\log(m_1)x_{m_1n}x_{m_2n},\\
\Pi_{1,0}&=&\sideset{}{^N}\sum_{n\leq M}\mathfrak S^1(n)
\sideset{}{^N}\sum_{m_1\leq\frac{M}{n}}
\sideset{}{^N}\sum_{m_2\leq\frac{M}{n}}
\frac{\tau(m_1)\tau(m_2)}{m_1m_2}x_{m_1n}x_{m_2n}
\ena
with
\bna
\mathfrak S(n)=\frac{1}{n}\sum_{\ell\mid n}\frac{\mu(\ell)}{\ell},
\qquad  \mathfrak S^1(n)=\frac{1}{n}\sum_{\ell\mid n}\frac{\mu(\ell)}{\ell}\log \ell.
\ena

To diagonalize the quadratic forms above,  we take
\bna
y_{n}=\sideset{}{^N}\sum_{m\leq\frac{M}{n}}\frac{\tau(m)}{m}x_{nm},\quad
y_{n}^1=\sideset{}{^N}\sum_{m\leq\frac{M}{n}}\frac{\tau(m)\log m}{m}x_{nm},
\ena
and thus
\bna
x_n=\sideset{}{^N}\sum_{m\leq \frac{M}{n}}\frac{\mu*\mu(m)}{m}y_{mn},
\ena
where $\mu*\mu(m)=\sum_{ab= m}\mu(a)\mu(b)$.

To optimize the value of $\Pi$ with respect to the linear form in $\tilde M_1^h$, we take
\bea
y_n=
\left\{
\begin{aligned}
&\frac{n}{\varphi(n)}\mu(n),\quad &&\textrm{if\ }(n,N)=1,n\leq M,\\
&0,\quad&&\mbox{otherwize}.
\end{aligned}
\right.\label{choice-of-y_n}
\eea
Then, following \cite[Proposition 7]{Rou} with careful computations to remove the dependence of $N$ in the implied constant, we have the following.
\begin{prop}
Let $y_n$ be as in \eqref{choice-of-y_n}.
For $M=N^{\Delta}$ with $0<\Delta\leq \frac{3}{2}-\epsilon$ we have
\bna
\tilde M^h_1&=&\Delta
\frac{\varphi(N)}{N}\log N+O_{k,\Delta}\left((\log\log N)^6\right),\\
\tilde M^h_2&=&(3\Delta+2\Delta^2) \left(\frac{\varphi(N)}{N}\right)^2
(\log N)^2+O_{k,\Delta}\left(\log N (\log\log N)^6\right).
\ena
\end{prop}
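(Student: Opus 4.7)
My plan is to substitute the inversion formula $x_n = \sum_{k \le M/n,\,(k,N)=1} \tfrac{\mu*\mu(k)}{k}y_{nk}$ into each of the moment sums, collapse the resulting iterated sums using the Dirichlet-convolution identity $\mu*\mu*\mathbf{1} = \mu$, and then evaluate the one-variable sums over the chosen $y_n = \tfrac{n}{\varphi(n)}\mu(n)\mathbf{1}_{(n,N)=1,\,n\le M}$ by the Selberg-Delange method. The key structural observation is that with this specific $y_n$, every inner convolution reduces to a square-free sum of the shape $\sum_{n\le X,\,(n,N)=1}\mu^2(n)/\varphi(n)$, whose asymptotic $\frac{\varphi(N)}{N}\log X + O(1)$ follows from a contour shift applied to its Dirichlet series $\zeta(s+1)\cdot\prod_{p\mid N}(1-p^{-s-1})\prod_{p\nmid N}(1-p^{-s-1})(1+\tfrac{1}{(p-1)p^s})$.

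For the first mollified moment, substituting the inversion into $\sum_{m\le M,\,(m,N)=1}x_m/m$ and regrouping by $n=mk$ yields $\sum_{n\le M,\,(n,N)=1}\tfrac{y_n}{n}\sum_{k\mid n}\mu*\mu(k)$, which collapses to $\sum_{n\le M,\,(n,N)=1}\tfrac{\mu(n)}{n}y_n$ since $\mathbf{1}*\mu*\mu = \mu$. Plugging in the chosen $y_n$ gives precisely $\sum_{n\le M,\,(n,N)=1}\mu^2(n)/\varphi(n)$, and the Selberg-Delange evaluation produces the main term $\Delta\tfrac{\varphi(N)}{N}\log N$, with the $N$-dependent secondary terms and the $O(N^{-3+\Delta+\epsilon})$ arithmetic remainder combining to the claimed $O_{k,\Delta}((\log\log N)^6)$ error.

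For the second mollified moment, I would first note that the double $m_1,m_2$-sums inside $\Pi$, $\Pi_{0,1}$, $\Pi_{1,0}$ assemble exactly into the diagonal quadratic forms $y_n^2$, $y_ny_n^1$, $y_n^2$ weighted by $\mathfrak S$ or $\mathfrak S^1$. For square-free $n$ coprime to $N$, a short multiplicative computation gives $\mathfrak S(n) = \varphi(n)/n^2$, whence $\mathfrak S(n)y_n^2 = \mu^2(n)/\varphi(n)$, so the evaluation of $\Pi$ reduces to the very same sum that arose in the first-moment computation, giving $\Pi \sim 3\Delta\tfrac{\varphi(N)}{N}(\log N)^2$. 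A parallel collapse for $y_n^1$ (again via $\mu*\mu*\mathbf{1}=\mu$) expresses it as an explicit $\log$-weighted version of $y_n$, and the resulting sums for $\Pi_{0,1}$ and $\Pi_{1,0}$ become $\sum_{n\le M,\,(n,N)=1}\mu^2(n)(\log n)^j/\varphi(n)$ with $j=1,2$, again evaluated by Selberg-Delange. Combining the three via $\tilde M_2^h = \tfrac{\varphi(N)}{N}(\Pi - 2\Pi_{0,1} - 2\Pi_{1,0}) + O_k(N^{-3+2\Delta+\epsilon})$ produces the coefficient $3\Delta + 2\Delta^2$ after the cancellations forced by the specific choice of $y_n$.

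The main technical obstacle will be ensuring that every implied constant depends only on $k$ and $\Delta$ and not on $N$. Each appeal to Selberg-Delange must be carried out uniformly in the modulus, which requires a Vinogradov-type zero-free region for $\zeta$ combined with explicit control on the Euler-product factors at primes dividing $N$; in particular, $\log$-powers of $N$ hidden inside constants such as $\sum_{p\mid N}(\log p)/(p-1)$ and $\prod_{p\mid N}(1-p^{-1})^{-1}$ must be bounded using $\omega(N)\ll \log N/\log\log N$, which is precisely where the $(\log\log N)^6$ loss in the error term enters. This is exactly the additional bookkeeping alluded to immediately before the proposition, where Rouymi's Proposition 7 is invoked ``with careful computations to remove the dependence of $N$ in the implied constant.''
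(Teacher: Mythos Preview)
Your approach is correct and is essentially the standard mollification computation that the paper invokes by citing \cite[Proposition~7]{Rou}: diagonalize via the change of variables $x_n \leftrightarrow y_n$, observe that the specific choice $y_n=\tfrac{n}{\varphi(n)}\mu(n)$ collapses $\mathfrak S(n)y_n^2$ to $\mu^2(n)/\varphi(n)$, and evaluate the resulting one-variable sums asymptotically while tracking the $N$-dependence of all secondary constants.

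Two small imprecisions are worth tightening. First, the identity you need for $y_n^1$ is not $\mathbf 1*\mu*\mu=\mu$ but rather the logarithmic derivative of $\tau*(\mu*\mu)=\delta$: one finds $\sum_{mk=r}\tau(m)(\log m)(\mu*\mu)(k)=2\Lambda(r)$, whence $y_n^1=-2y_n\sum_{p\le M/n,\ p\nmid nN}\tfrac{\log p}{p-1}$. Second, $\Pi_{1,0}$ does not reduce to $\sum_n \mu^2(n)(\log n)^2/\varphi(n)$; using $\mathfrak S^1(n)=-\tfrac{\varphi(n)}{n^2}\sum_{p\mid n}\tfrac{\log p}{p-1}$ one gets $\Pi_{1,0}=-\sum_{n}\tfrac{\mu^2(n)}{\varphi(n)}\sum_{p\mid n}\tfrac{\log p}{p-1}$, which after swapping the order of summation is only $O_{k,\Delta}(\log N)$ and hence contributes nothing to the main term. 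The coefficient $2\Delta^2$ therefore comes entirely from $-2\Pi_{0,1}$, not from a mixture of $\Pi_{0,1}$ and $\Pi_{1,0}$; with this correction your bookkeeping gives exactly $(3\Delta+2\Delta^2)(\varphi(N)/N)^2(\log N)^2$.
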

By Cauchy's inequality,
\bna
\sum_{\pi\in\mathcal A(2k,N^3)\atop{L_{\mathrm{fin}}(1/2,\pi)\neq 0}}
\omega_\pi\geq\frac{(\tilde M_1^h)^2}{\tilde M_2^h}
\geq \frac{\Delta}{3+2\Delta}-\epsilon
\ena
as $N$ tends to infinity. Recall $\omega_\pi$ in \eqref{harmonic-weight}.
On taking $\Delta=\frac{3}{2}-\epsilon$, we prove Theorem \ref{nonvanishing}.

\section{The truncated Kunzetsov trace formula}
\label{sec:Weyl-Law}
\setcounter{equation}{0}

In this section, following Li \cite{Li2011},
we establish a truncated  Kuznetsov trace formula in the following proposition.
\begin{prop}\label{prop:truncated-Kuznetsov}
Let the notation be as in Proposition \ref{prop-sKTF-cuspidal}.
We have
\begin{align*}
\sum_{\tiny\begin{array}{c}
\pi\in\mathcal A(0,N^3,M)\\
0< t_{\pi}\leq T
\end{array}}\frac{\lambda_\pi(m_1)\lambda_\pi(m_2)}{L_{\mathrm{fin}}(1,\pi,\mathrm{sym}^2)}
=&\delta(m_1,m_2)\frac{N^2}{2\pi^2}\int_{-T}^T\tanh(\pi t)tdt
+O_N\left((m_1m_2)^{\theta+\epsilon}\right)\nonumber\\
&+O\left(
(NT)^\epsilon(m_1m_2)^{\frac{1}{4}+\epsilon}
\min\left\{T,\sqrt{m_1m_2}N^{-2}
\right\}
\right)\\
&
+O\left((m_1m_2)^{\theta+\epsilon}\frac{N^2 T}{\log (N^2T)}\right).
\end{align*}
where $\theta=7/64$ by Kim-Sarnak's bound.
\end{prop}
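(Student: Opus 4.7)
The plan is to apply the smooth Kuznetsov formula of Proposition \ref{prop-sKTF-cuspidal} with a carefully chosen test function $h$ that approximates the sharp cutoff $\mathbf 1_{(0,T]}(t_\pi)$, and then to control the resulting diagonal, off-diagonal, and smoothing errors separately. Following Li \cite{Li2011}, I would construct a pair of even functions $h_T^{\pm}(t)$ that are holomorphic in some strip $|\Im z|<A$ with $A>1/2$, of rapid polynomial decay on the real axis, and satisfy $h_T^-(t)\leq \mathbf 1_{[-T,T]}(t)\leq h_T^+(t)$ for $t\in\mathbb R$, differing from $\mathbf 1_{[-T,T]}$ only in transition windows of length $\eta:=1/\log(N^2T)$ near the endpoints $\pm T$. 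Such $h_T^{\pm}$ can be built by convolving the characteristic function of a slightly expanded or contracted interval against a non-negative Paley--Wiener bump of width $\eta$, so that $h_T^{\pm}$ meet the analytic hypotheses imposed in Proposition \ref{prop-sKTF-cuspidal}.

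Applied to $h_T^{\pm}$, Proposition \ref{prop-sKTF-cuspidal} produces on its diagonal side the desired main term together with an error $O(N^2T\eta)=O(N^2T/\log(N^2T))$ arising from $\int(h_T^{\pm}(t)-\mathbf 1_{[-T,T]}(t))\tanh(\pi t)t\,dt$, which is absorbed into the third error term. On the off-diagonal side, I would combine Weil's bound $|S(m_1,m_2;N^2c)|\ll \tau(N^2c)(m_1,m_2,N^2c)^{1/2}(N^2c)^{1/2}$ with standard uniform estimates on the Bessel transform
$$\mathcal K(x):=\int_{-\infty}^{+\infty}\frac{h_T^{\pm}(t)t}{\cosh(\pi t)}J_{2it}(4\pi x)\,dt,\qquad x=\frac{\sqrt{m_1m_2}}{N^2c},$$
which is of size $O(T)$ in the oscillatory range $x\gg 1/T$, of size $O(x^{1/2})$ for $x\gg 1$, and decays faster than any power of $xT$ for $x\ll 1/T$. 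Splitting the sum over $c$ according to these three regimes and summing the resulting geometric series yields the bound $O((NT)^\epsilon(m_1m_2)^{1/4+\epsilon}\min\{T,\sqrt{m_1m_2}/N^2\})$ claimed in the second error term.

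Since $A_\pi:=\lambda_\pi(m_1)\lambda_\pi(m_2)/L_{\mathrm{fin}}(1,\pi,\mathrm{sym}^2)$ is not sign-definite, the sandwich of the sharp sum between the two smooth ones is not immediate; instead, Kim--Sarnak gives $|\lambda_\pi(n)|\ll n^{\theta+\epsilon}$, so the smoothing error is bounded by
$$(m_1m_2)^{\theta+\epsilon}\sum_\pi\frac{h_T^+(t_\pi)-h_T^-(t_\pi)}{L_{\mathrm{fin}}(1,\pi,\mathrm{sym}^2)}.$$
Applying Proposition \ref{prop-sKTF-cuspidal} once more with $m_1=m_2=1$ and test function $h_T^+-h_T^-$, the off-diagonal contribution is negligible because $\sqrt{m_1m_2}/N^2=1/N^2$ is tiny, and one recovers $\sum_\pi(h_T^+(t_\pi)-h_T^-(t_\pi))L_{\mathrm{fin}}(1,\pi,\mathrm{sym}^2)^{-1}\ll N^2T/\log(N^2T)$, which closes the bootstrap and produces the third error term. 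The finitely many exceptional eigenvalues with $t_\pi\in i(0,\theta]$ contribute $O_N((m_1m_2)^{\theta+\epsilon})$, the first error term. The main technical obstacle is the uniform control of $\mathcal K(x)$ across the three regimes, where the bounds must be matched precisely at $x\asymp 1/T$ to yield the minimum in the second error term; this is where the careful analytic choice of $h_T^{\pm}$ interacts most delicately with the shape of the Bessel integral.
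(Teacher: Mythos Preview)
Your overall strategy—smooth the sharp cutoff by majorant/minorant functions $h_T^\pm$, apply Proposition~\ref{prop-sKTF-cuspidal}, and bootstrap the smoothing error via a second application with $m_1=m_2=1$—is reasonable and is a genuine alternative to what the paper does. The paper instead follows Li \cite{Li2011}: it fixes a single even $h$ with $\widehat h$ supported in $(-1,1)$ and $\widehat h(0)=1$, sets $h_{\mu,L}(z)=h(L(\mu+z))+h(L(\mu-z))$, applies the Kuznetsov formula for each $\mu$, and then \emph{integrates over $\mu\in[0,T]$} to recover the sharp sum via $\widehat h(0)=1$. The advantage of the paper's route is that for the localized bump $h_{\mu,L}$ one has (from \cite[(2.4)]{Li2011}) a precise asymptotic for the Bessel transform in terms of $\widehat h$; the compact support of $\widehat h$ then forces the $c$-sum to truncate at $c\ll \frac{\sqrt{m_1m_2}}{N^2\mu}e^{\pi L}$, and the factor $\min\{T,\sqrt{m_1m_2}/N^2\}$ in the second error term comes cleanly from the length of the $\mu$-integration after this truncation.

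Your proposal has a real gap at exactly the point you flag as the ``main technical obstacle'': the three-regime description of
\[
\mathcal K(x)=\int_{-\infty}^{\infty}\frac{h_T^{\pm}(t)\,t}{\cosh(\pi t)}\,J_{2it}(4\pi x)\,dt
\]
is not correct as stated, and in particular is too weak to produce the claimed off-diagonal bound. Because $|J_{2it}(y)|$ grows like $e^{\pi|t|}$ (from $1/|\Gamma(1+2it)|$), the factor $1/\cosh(\pi t)$ does \emph{not} exponentially localize the $t$-integral; a stationary-phase analysis shows that the integrand has amplitude $\asymp|t|^{1/2}$ and phase $2t\log\frac{xe}{4t}$, with stationary point at $t\asymp x$. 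Consequently your ``middle regime'' bound $\mathcal K(x)=O(T)$ for $1/T\ll x\ll 1$, once inserted into the $c$-sum with Weil's bound, yields
\[
T\,N^{1+\epsilon}\sum_{\sqrt{m_1m_2}/N^2\le c\le T\sqrt{m_1m_2}/N^2} c^{-1/2+\epsilon}\;\gg\;T^{3/2}(m_1m_2)^{1/4+\epsilon},
\]
which is larger by $T^{1/2}$ than the error you are aiming for. Likewise the claim that $\mathcal K(x)$ decays ``faster than any power of $xT$'' for $x\ll 1/T$ is not justified: contour-shifting to $\Im t=-\sigma$ costs a factor $e^{R\sigma}$ from the exponential type $R\sim 1/\eta=\log(N^2T)$ of your Paley--Wiener construction, so one only gets $\mathcal K(x)\ll (x^2N^2T)^{\sigma}$-type savings, not arbitrary powers of $xT$. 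To make your route work you would essentially have to redo Li's asymptotic for a non-localized $h$, or observe that $h_T^\pm$ is (up to normalization) $\int_0^T h_{\mu,L}\,d\mu$ and import the paper's analysis—at which point the two approaches coincide.
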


The error term $O_N((m_1m_2)^{\theta+\epsilon})$ in Proposition \ref{prop:truncated-Kuznetsov}
comes from exceptional eigenvalues, which vanishes if the Selberg smallest eigenvalue conjecture is true.
The weighted Weyl's law in Theorem \ref{cor-weighted-weyl-law} follows immediately by letting $m_1=m_2=1$ and applying the fact
\bna
\int_{-T}^T\tanh(\pi t)tdt=T^2+O(1).
\ena

To prove Proposition \ref{prop:truncated-Kuznetsov},
following \cite{DuGu1975,Li2011}, we
fix $h$ to be an even function such that its Fourier transform $\widehat h$ satisfies
\bna
\mathrm{supp}(\widehat h)\subset(-1,1)\quad\text{ and }\quad  \widehat h(0)=1.
\ena
Such test function satisfies the condition in Proposition
\ref{prop-sKTF-cuspidal} and we use the bound
$$h(z)\ll (1+|z|^4)^{-1}.$$

\subsection{The weighted local estimation}

Assume that  $\mu>0$ and $L\gg 1$. We let
\bna
h_{\mu,L}(z):=h(L(\mu+z))+ h(L(\mu-z)).
\ena
Applying Proposition \ref{prop-sKTF-cuspidal}
one has
\bna
\sum_{\pi\in\mathcal A(0,N^3,M)}h_{\mu, L}(t_\pi)
\frac{\lambda_\pi(m_1)\lambda_\pi(m_2)}{L_{\mathrm{fin}}(1,\pi,\mathrm{sym^2})}
=\delta(m_1,m_2)\mathcal D(\mu, L)+\mathcal {ND}(m_1,m_2;\mu, L),
\ena
where
\begin{align}
\mathcal{D}(\mu,L)&=
\frac{N^2}{2\pi^2}\int_{-\infty}^\infty h_{\mu,L}(t)\tanh(\pi t) tdt,
 \label{D-mu-L}\\
\mathcal{ND}(m_1,m_2;\mu,L)&=i
\sum_{c\geq 1}\frac{A_{N,M}(c)}{c}S(m_1,m_2;N^2c)
\int_{-\infty}^{+\infty}\frac{h_{\mu,L}(t)t}{\cosh(\pi t)}J_{2it}\left(\frac{4\pi\sqrt{m_1m_2}}{N^2c}\right)dt.
 \label{ND-m12-mu-L}
\end{align}

For the diagonal term, we have
\bna
\mathcal D(\mu, L)=\frac{N^2}{\pi^2}\int_{-\infty}^\infty h(L(\mu-t))\tanh(\pi t) tdt
\ll\left(\frac{\mu}{L}+\frac{1}{L^2}\right)N^2.
\ena
For $\mathcal{ND}(m_1,m_2;\mu,L)$, by the asymptotic expansion
of $J_{2it}(2x)$, we have (see
\cite[(2.4)]{Li2011})
\bna
\frac{2i}{\pi}\int_{-\infty}^{\infty}J_{2it}(x)\frac{h_{\mu, L}(t)t}{\cosh\pi t}dt
=\frac{\mu^{1/2}}{\pi L}\im\left\{(2i)^{1/2}\left(\frac{xe}{4\pi \mu}\right)^{2i\mu}\widehat h\left(\frac{-\log\frac{xe}{4\mu}}{\pi L}\right)\right\}
+\text{lower order terms}.
\ena
Note that $\mathrm{supp}(\widehat h)\subset(-1,1)$.
Thus
\begin{align*}
\mathcal {ND}(m_1,m_2;\mu,L)\ll&\frac{\mu^{1/2}}{L}\sum_{1\leq c<
\frac{\pi\sqrt{mn}}{N^2\mu}e^{\pi L+1}}\frac{|A_{N,M}(c)|}{c}|S(m,n;N^2c)|
\end{align*}
which is negligible if
\bea
\mu\geq \frac{\pi\sqrt{mn}}{N^2}e^{\pi L+1}.\label{non-diag-vanish}
\eea
Moreover, by Weil's bound of the Kloosterman sum,
\bea
\mathcal{ND}(m_1,m_2;\mu,L)&\ll&\frac{\mu^{1/2}}{L}
N^{1+2\epsilon}\sum_{1\leq c<\frac{\pi\sqrt{mn}}{N^2\mu}e^{\pi L+1}}\frac{(m,n,c)^{1/2}}{c^{1/2-\epsilon}}\nonumber\\
&\ll&
(mn)^{\frac{1}{4}+\epsilon}\frac{1}{\mu^\epsilon L}
e^{\frac{\pi L}{2}+\epsilon L}
.\label{non-diag-estimate}
\eea
Therefore, as in \cite[Lemma 2.3]{DuGu1975}, we can assume
 $h(z)\geq 0$ for $z\in \R\cup[-\frac{i}{2},\frac{i}{2}]$ and $h(x)>0$ for $x\in(-1,1)$, and establish the weighted local estimation in the following lemma.
\begin{lemma}\label{lemma:local-estimation}
For $\mu\geq \frac{1}{L}$
one has
\bna
\sum_{\tiny\begin{array}{c}\pi\in\mathcal A(0,N^3,M)\\
t_\pi>0\\
|\mu-t_\pi|<\frac{1}{L}
\end{array}}\frac{1}{L_{\mathrm{fin}}(1,\pi,\mathrm{sym}^2)}
\ll
N^2
\frac{\mu}{L}+\frac{1}{\mu^\epsilon L}
e^{\frac{\pi L}{2}+\epsilon L}.
\ena
\end{lemma}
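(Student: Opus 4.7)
The plan is to apply the Kuznetsov formula in Proposition \ref{prop-sKTF-cuspidal} with $m_1 = m_2 = 1$ and test function $h_{\mu, L}$, and to exploit positivity of the spectral side as an upper-bound device. Since $h(x) > 0$ on the open interval $(-1, 1)$, I may assume, after a harmless adjustment of $h$, that $h(x) \geq c_0 > 0$ throughout $[-1, 1]$. Then for every $\pi \in \mathcal{A}(0, N^3, M)$ with $t_\pi > 0$ and $|t_\pi - \mu| < 1/L$, the real number $L(\mu - t_\pi)$ lies in $(-1, 1)$, so $h_{\mu, L}(t_\pi) \geq h(L(\mu - t_\pi)) \geq c_0$.

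With $h$ non-negative on $\mathbb{R} \cup [-i/2, i/2]$, every term of the spectral side will be non-negative, yielding
\begin{align*}
c_0 \sum_{\substack{\pi \in \mathcal{A}(0, N^3, M) \\ t_\pi > 0, \, |\mu - t_\pi| < 1/L}} \frac{1}{L_{\mathrm{fin}}(1, \pi, \mathrm{sym}^2)}
&\leq \sum_{\pi} h_{\mu, L}(t_\pi) \frac{1}{L_{\mathrm{fin}}(1, \pi, \mathrm{sym}^2)} \\
&= \mathcal{D}(\mu, L) + \mathcal{ND}(1, 1; \mu, L),
\end{align*}
by \eqref{D-mu-L}--\eqref{ND-m12-mu-L}. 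I would then invoke the preceding bound $\mathcal{D}(\mu, L) \ll N^2(\mu/L + 1/L^2) \ll N^2\mu/L$ (using $\mu \geq 1/L$) for the diagonal term, and specialize the Weil-type estimate \eqref{non-diag-estimate} to $m_1 = m_2 = 1$ to obtain $\mathcal{ND}(1, 1; \mu, L) \ll \mu^{-\epsilon} L^{-1} e^{\pi L/2 + \epsilon L}$. Combining these would yield the stated bound.

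The main technical obstacle is securing non-negativity at the complementary-series eigenvalues, where $t_\pi = i\tau$ with $|\tau| \leq \theta$ by Kim--Sarnak. There the arguments $L(\mu \pm i\tau)$ sit in a horizontal strip rather than on $[-i/2, i/2]$, so the hypothesized positivity of $h$ does not directly apply. I would handle this either by choosing $h$ of the form $|g|^2$ for a suitable entire Paley--Wiener function $g$ in order to secure positivity in a neighborhood of the imaginary axis, or, more robustly, by bounding the finitely many exceptional contributions directly: using $h(L\mu \pm iL\tau) \ll (1 + L\mu)^{-B}$ together with the well-known finiteness of the exceptional spectrum for fixed $N$, these terms can be absorbed into the diagonal error $O(N^2\mu/L)$ and cause no loss in the final estimate.
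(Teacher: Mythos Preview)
Your proposal is correct and follows essentially the same route as the paper: apply Proposition \ref{prop-sKTF-cuspidal} with $m_1=m_2=1$ and test function $h_{\mu,L}$, use non-negativity of the spectral side to majorize the local sum, and then insert the already-derived bounds $\mathcal D(\mu,L)\ll N^2\mu/L$ and \eqref{non-diag-estimate}. The paper does not spell out the argument beyond the sentence preceding the lemma; it simply invokes \cite[Lemma 2.3]{DuGu1975} to justify the choice of $h$ with $h(z)\geq 0$ on $\R\cup[-i/2,i/2]$ and $h>0$ on $(-1,1)$, which is exactly your positivity device. Your additional remarks on the complementary-series terms are a fair technical caveat---the paper absorbs this into the citation of \cite{DuGu1975} and \cite{Li2011} rather than arguing it explicitly---and either of your two suggested fixes (taking $h$ of the form $|g|^2$, or bounding the finitely many exceptional terms directly) is standard and adequate.
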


\subsection{The proof of Proposition \ref{prop:truncated-Kuznetsov}}
Applying the fact that
$\widehat h(0)=1$,
we have
\bna
\sum_{\tiny\begin{array}{c}
\pi\in\mathcal A(0,N^3,M)\\
0< t_{\pi}\leq T
\end{array}}\frac{\lambda_\pi(m_1)\lambda_\pi(m_2)}{L_{\mathrm{fin}}(1,\pi,\mathrm{sym}^2)}
&=&
\sum_{\tiny\begin{array}{c}
\pi\in\mathcal A(0,N^3,M)\\
0< t_{\pi}\leq T
\end{array}}\frac{\lambda_\pi(m_1)\lambda_\pi(m_2)}{L_{\mathrm{fin}}(1,\pi,\mathrm{sym}^2)} L\int_{\R}h(L(\mu-t_\pi))d\mu\\
&=&\mathcal M-\mathcal E_0-\mathcal E_1+\mathcal E_2,
\ena
where
\bea
\mathcal M
&=&
L\sum_{\tiny\begin{array}{c}
\pi\in\mathcal A(0,N^3,M)\end{array}}\frac{\lambda_\pi(m_1)\lambda_\pi(m_2)}{L_{\mathrm{fin}}(1,\pi,\mathrm{sym}^2)}
\int_{-T}^Th(L(\mu-t_\pi))d\mu\nonumber\\
&=&
L\int_{0}^T
\left\{\sum_{\tiny\begin{array}{c}
\pi\in\mathcal A(0,N^3)\end{array}}
h_{\mu,L}(t_\pi)
\frac{\lambda_\pi(m_1)\lambda_\pi(m_2)}
{L_{\mathrm{fin}}(1,\pi,\mathrm{sym}^2)}
\right\}d\mu
,\label{term-M}
\eea
and
\bna
\mathcal E_0&=&
L\sum_{\tiny\begin{array}{c}
\pi\in\mathcal A(0,N^3,M)\\
t_\pi\in i\R\\
0<|t_\pi|<1/2\end{array}}\frac{\lambda_\pi(m_1)\lambda_\pi(m_2)}{L_{\mathrm{fin}}(1,\pi,\mathrm{sym}^2)}
\int_{-T}^Th(L(\mu-t_\pi))d\mu,\\
\mathcal E_1&=&
L\sum_{\tiny\begin{array}{c}
\pi\in\mathcal A(0,N^3,M)\\
t_{\pi}>T\end{array}}\frac{\lambda_\pi(m_1)\lambda_\pi(m_2)}{L_{\mathrm{fin}}(1,\pi,\mathrm{sym}^2)}\int_{-T}^Th(L(\mu-t_\pi))d\mu,\\
\mathcal E_2&=&
L\sum_{\tiny\begin{array}{c}
\pi\in\mathcal A(0,N^3,M)\\
0<t_{\pi}\leq T
\end{array}}\frac{\lambda_\pi(m_1)\lambda_\pi(m_2)}{L_{\mathrm{fin}}(1,\pi,\mathrm{sym}^2)}\int_{\R-[-T,T]} h(L(\mu-t_\pi))d\mu.
\ena

Note that $\mathcal E_0$ involves the exceptional eigenvalues.
By the density theorem
(See \cite[Theorem 11.7]{Iw2002-spectral-method}) we have
\bea
\mathcal E_0\ll (m_1m_2)^{\theta+\epsilon} N^\epsilon
\sum_{
\tiny\begin{array}{c}
\pi\in\mathcal A(0,N^3,M)\\
t_\pi\in i\R
\end{array}}1\ll_N  (m_1m_2)^{\theta+\epsilon} \label{E_0}.
\eea

For  $\mathcal E_1$ and $\mathcal E_2$,
by Lemma \ref{lemma:local-estimation} and \cite[Lemmas 2.3 and 2.4]{Li2011}, we have the following result.
\begin{lemma}\label{lemma:Estimat-E1E2}
For $L\leq\frac{1}{2\pi}\log (N^2T)$ one has
\bna
\mathcal E_1+ \mathcal E_2\ll (m_1m_2)^{\theta+\epsilon} N^2\frac{T}{L}.
\ena
\end{lemma}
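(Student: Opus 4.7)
The plan is to first extract the Hecke eigenvalue contribution $|\lambda_\pi(m_1)\lambda_\pi(m_2)| \ll (m_1m_2)^{\theta+\epsilon}$ via the Kim--Sarnak bound. This reduces both $\mathcal E_1$ and $\mathcal E_2$ to an estimate for weighted spectral counts of $1/L_{\mathrm{fin}}(1,\pi,\mathrm{sym}^2)$, weighted by the integrals
\[
W(t_\pi) := L\int_{-T}^{T} h(L(\mu - t_\pi))\,d\mu \quad\text{(for }\mathcal E_1\text{)},\qquad
W^*(t_\pi) := L\int_{\mathbb R\setminus[-T,T]} h(L(\mu - t_\pi))\,d\mu \quad\text{(for }\mathcal E_2\text{)}.
\]
The key point is that after the substitution $u = L(\mu - t_\pi)$, the rapid decay hypothesis $h(z)\ll (1+|z|)^{-B}$ yields $W(t_\pi) \ll (1 + L(t_\pi - T))^{-B+1}$ when $t_\pi > T$, and $W^*(t_\pi) \ll (1 + L(T - t_\pi))^{-B+1} + (1 + L(T + t_\pi))^{-B+1}$ when $0 < t_\pi \le T$. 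Thus in both sums the mass is concentrated within $O(1/L)$ of the point $t_\pi = T$.

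Next I would dyadically decompose the spectrum. For $\mathcal E_1$ I partition into strips $t_\pi \in [T + j/L,\,T + (j+1)/L]$ with $j \ge 0$. In the $j$-th strip the weight is $O((1+j)^{-B})$, and Lemma~\ref{lemma:local-estimation} applied at $\mu = T + j/L$ bounds the corresponding partial sum of $1/L_{\mathrm{fin}}(1,\pi,\mathrm{sym}^2)$ by
\[
\ll \frac{N^2(T + j/L)}{L} + \frac{1}{L}\,e^{\pi L/2 + \epsilon L}.
\]
Summing over $j$ against the rapidly decaying weight and using that $\sum_{j\ge 0} (1+j)^{-B} j \ll 1$ for $B$ large, this produces
\[
\mathcal E_1 \ll (m_1m_2)^{\theta+\epsilon}\left(\frac{N^2 T}{L} + \frac{e^{\pi L/2 + \epsilon L}}{L}\right).
\]
For $\mathcal E_2$ I argue analogously, partitioning $t_\pi \in [T - (j+1)/L,\,T - j/L]$ for $0 \le j \le TL$ (the contribution near $-T$ is automatically negligible since $t_\pi > 0$ forces $L(T+t_\pi) \gg LT$); the same two-term bound results.

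Finally I invoke the hypothesis $L \le \frac{1}{2\pi}\log(N^2T)$, which gives $e^{\pi L/2 + \epsilon L} \ll (N^2T)^{1/4+\epsilon}$. Since $(N^2 T)^{1/4+\epsilon}/L$ is dominated by $N^2T/L$, the two contributions collapse into the advertised bound $(m_1m_2)^{\theta+\epsilon} N^2 T/L$. The main technical obstacle is precisely balancing the Weyl-type main term $N^2\mu/L$ against the Kloosterman/off-diagonal tail $L^{-1}e^{\pi L/2}$ in Lemma~\ref{lemma:local-estimation}; the chosen cutoff on $L$ is tailored to absorb the latter, and is what forces the $T/\log T$ saving that ultimately powers Theorem~\ref{cor-weighted-weyl-law} when we set $L \asymp \log(N^2T)$ and take $m_1=m_2=1$.
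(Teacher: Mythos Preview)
Your proposal is correct and is precisely the argument the paper has in mind: the paper itself does not spell out a proof but simply invokes Lemma~\ref{lemma:local-estimation} together with \cite[Lemmas~2.3 and~2.4]{Li2011}, and your write-up is exactly that standard argument (Kim--Sarnak to strip off the Hecke eigenvalues, decay of $h$ to localize the weight near $t_\pi=T$, dyadic partition into $1/L$-windows, application of the local Weyl-type bound in each window, and finally the hypothesis $L\le \tfrac{1}{2\pi}\log(N^2T)$ to absorb the $e^{\pi L/2}$ term into $N^2T$). A minor cosmetic point: the weight in the $j$-th strip is $O((1+j)^{-B+1})$ rather than $O((1+j)^{-B})$, but since $B$ is taken sufficiently large this is immaterial.
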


For $\mathcal M$
in \eqref{term-M}, by Proposition \ref{prop-sKTF-cuspidal} we have
\bna
\mathcal M=\delta(m_1,m_2)L\int_{0}^T\mathcal {D}(\mu, L)d\mu
+L\int_{0}^{T}\mathcal {ND}(m_1,m_2;\mu,L)d\mu,
\ena
where $\mathcal D(\mu,L)$ and $\mathcal{ND}(m_1,m_2;\mu,L)$
are given in \eqref{D-mu-L} and \eqref{ND-m12-mu-L}, respectively.

Note that
\bna
L\int_0^T\mathcal D(\mu, L)d\mu
&=&
\frac{L}{2}
N^2\frac{1}{2\pi^2}\int_{-T}^T\int_{-\infty}^\infty h_{\mu,L}(t)\tanh(\pi t) tdtd\mu\\
&=&\frac{N^2}{2\pi^2}\int_{-T}^T\tanh(\pi t)tdt+O\left(\frac{N^2T}{L}\right).
\ena
By \eqref{non-diag-vanish} and \eqref{non-diag-estimate},
\bna
L\int_{0}^T\mathcal {ND}(m_1,m_2;\mu,L)d\mu
\ll(mn)^{\frac{1}{4}+\epsilon}e^{\frac{\pi L}{2}+\epsilon L}
\min\left\{T,\frac{\sqrt{mn}}{N^2}e^{\pi L}\right\}^{1-\epsilon}.
\ena
Therefore, on taking $L=\frac{1}{2\pi K}\log(N^2T)$ with $K>1$ we have
\bea
\mathcal M&=&
\frac{N^2}{2\pi^2}\int_{-T}^T\tanh(\pi t)tdt+O\left(K\frac{N^2T}{\log(N^2T)}\right)\nonumber\\
&&+O\left(
(mn)^{\frac{1}{4}+\epsilon}
\min\left\{N^{\frac{1}{2K}+\epsilon}T^{1+\frac{1}{4K}+\epsilon},
\sqrt{mn}N^{-2+\frac{1}{K}+\epsilon} T^{\frac{1}{2K}+\epsilon}
\right\}
\right).\label{Est:M}
\eea
Thus Proposition \ref{prop:truncated-Kuznetsov} follows from
\eqref{E_0}, \eqref{Est:M} and Lemma \ref{lemma:Estimat-E1E2}
with $K=\frac{1}{\epsilon}$ and
\bna
L=\frac{\epsilon}{2\pi}\log(N^2T)
\ena
for sufficiently small $\epsilon$.

\appendix
\renewcommand{\theequation}{\thesection.\arabic{equation}}
\setcounter{equation}{0}

\section{The Rankin-Selberg Theory}\label{appendix-A}
In this part, we recall the Rankin-Selberg theory in \cite{Ja}
to prove Proposition \ref{prop-inner-product}.

\subsection{Eisenstein Series}
We recall Eisenstein series  in \cite{Co2004}.
For $\Phi$ in the Bruhat-Schwartz space $\mathcal S(\A^2)$, let
\bna
\widehat\Phi((x_1,x_2)):=\int_{\A^2}\Phi((y_1,y_2))\psi\left((y_1,y_2)\cdot {}^T     (x_1,x_2)\right)dy_1dy_2
\ena
be the Fourier transform of $\Phi$ and let
\bna
\eta(g,\Phi;s):=|\det g|^s\int_{\A^\times}\Phi((0,1)tg)|t|^{2s}d^\times t.
\ena
It gives a smooth section of the normalized induced representation
 $\mathrm{Ind}_{B(\A)}^{G(\A)}\delta_B^{s-\frac{1}{2}}$,
where $B$ is the standard parabolic subgroup of $GL_2$.

For $\re(s)>1$, we have the Eisenstein series
\bna
E(g,\Phi,s)=\sum_{\delta\in B(\Q)\backslash G(\Q)}\eta(\delta g,\Phi;s).
\ena
It has a meromorphic continuation to all $s\in \C$ with simple poles at
$s=0$ and $s=1$ and satisfies the functional equation
\bna
E(g,\Phi,s)
=E({}^Tg^{-1},\widehat\Phi,1-s).
\ena
The residues of $E(g,\Phi,s)$ at the simple poles are
\bna
    \res_{s=1}E(g,\Phi,s)=\frac{1}{2}\widehat\Phi((0,0))\quad\mbox{and}\quad
    \res_{s=0}E(g,\Phi,s)=-\frac{1}{2}\Phi((0,0)).
\ena

\subsection{The Rankin-Selberg Integral}
Let $\phi_\pi\in L^2_{\pi}(\overline G(\Q)\backslash\overline G(\A))$.
For $\re(s)>1$, we consider the Rankin-Selberg integral
\bna
I(s,\phi_\pi,\Phi)=\int_{G(\Q)Z(\A)\backslash  G(\A)}
\phi_\pi(g)\overline{\phi_{\pi}(g)} E(g,\Phi,s)dg.
\ena
It has meromorphic continuation
to all $s\in \C$ and satisfies the functional equation
\bna
I(s,\phi_\pi,\Phi)=I(1-s,\widetilde\phi_\pi,\widehat\Phi),
\ena
where
$\widetilde\phi_\pi(g)=\phi_\pi({}^Tg^{-1})$. Moreover, $s=1$
is a simple pole of $I(s,\phi_\pi,\Phi)$ with the residue
\bea
\res_{s=1}I(s,\phi_\pi,\Phi)=
\frac{\widehat\Phi((0,0))}{2}\langle\phi_\pi,\phi_\pi\rangle.
\label{appendix-residues-1}
\eea

For $\re(s)>1$ we have
\bna
I(s,\phi_\pi,\Phi)=\prod_v I_v(s,W_{v},\Phi_v),
\ena
where $I_v(s,W_v,\Phi_v)$ are local Rankin-Selberg integrals given by
\bna
\begin{aligned}
I_v(s,W_{v},\Phi_v)
=&\int_{N_v\backslash G_v}
W_v(g)\overline{W_v(g)}\Phi_v((0,1)g)|\det g|_v^s dg\\
=&\int_{K_v}\int_{\Q_v^\times}
W_{v}\left(\bma a&\\&1\ema k_v\right)
\overline{W_{v}\left(\bma a&\\&1\ema k_v\right)}|a|_v^s\frac{d^\times a}{|a|_v}
\xi_v(s,R(k_v)\Phi_v)
dk_v
\end{aligned}
\ena
with
\bna
\xi_v(s,R(k_v)\Phi_v)=\int_{\Q_v^\times}
|z|_v^{2s}\Phi_v((0,z)k_v)d^\times z.
\ena
\subsection{The choice of the test function}
We choose $\Phi=\prod_v\Phi_v$ as in Jacquet \cite{Ja} as follows.
\bit
\item For $v=\infty$, let $\Phi_\infty(x,y)=e^{-\pi (x^2+y^2)}$.
It is invariant
under the right action by  $K_\infty$ and
\bna
\widehat\Phi_\infty((0,0))=\int_{\R\times\R}e^{-\pi (x^2+y^2)}dxdy=1.
\ena
\item For $v=p$ with $p\nmid N$, we choose $\Phi_p$ to be the characteristic
function of $\Z_p\times\Z_p$ and thus
\bna
\widehat \Phi_v((0,0))=\int_{\Q_p\times \Q_p}\Phi(x,y)dxdy=1.
\ena
\item For $v=p$ with $p\mid N$, let $\Phi_p$ be the characteristic function
of  $ p^3\Z_p\times \Z_p^\times$.
In this case, for $k_p\in K_p$, $\Phi_p((0,z)k_p)=0$ unless $z\in \Z_p^\times$
and $k_p\in \Z_p^\times K_p(3)$.
Thus we have
\bna
\widehat\Phi_p((0,0))=\frac{1-p^{-1}}{p^3}.
\ena
\eit
Therefore
\bea
\widehat\Phi((0,0))=\prod_v\widehat\Phi_v((0,0))
=\prod_{p\mid  N}\frac{p-1}{p^4}.\label{constant-term-of-Fourier-tras}
\eea
\subsection{The local Rankin-Selberg integrals}

Recall that we choose $\phi_\pi\in L^2_{\pi}(\overline G(\Q)\backslash\overline G(\A))$
such that $W_\phi=W_\infty\times\prod_pW_p$, where $W_p$ and $W_\infty$ are given in
Section \ref{sect-local-whittaker-newforms}.
\subsubsection{Non-Archimedean places $p\mid N$}
By the choice of $\Phi_p$ we have
\bna
\xi_p(s,R(k_p)\Phi_p)
=\left\{
\begin{aligned}
&1,\quad &&\textrm{if\ }k_p\in \Z_p^\times K_p(3),\\
&0,\quad &&\mbox{otherwise}.
\end{aligned}
\right.
\ena
Thus
\bna
\begin{aligned}
I_p(s,W_{p},\Phi_p)=&\vol(\Z_p^\times K_p(3))
\int_{\Q_p^\times}
W_{p}\bma a&\\&1\ema
\overline{W_{p}\bma a&\\&1\ema}|a|_p^{s-1}d^\times a\\
=&\frac{1}{p^2(p+1)}.
\end{aligned}
\ena

Note that $\pi_p$ is a simple supercuspidal representation of conductor $p^3$.
By the classification of supercuspidal representations (see \cite{BuHe2006})
one has
\bna
L_p(s,\pi_p,\mathrm{sym^2})=1.
\ena
It gives that
\bna
I_p(s,W_{p},\Phi_p)=
\frac{1-p^{-s}}{p^2(p+1)}L_p(s,\mathbf{1}_{\Q_p})L_p(s,\pi_p,\mathrm{sym^2}).
\ena
\subsubsection{Non-Archimedean places $p\nmid N$}
In this case,
\bna
\xi_p(s,R(k_p)\Phi_p)=\int_{|z|_p\leq 1}|z|_p^{2s}d^\times z=
L_p(2s,\mathbf{1}_{\Q_p})
\ena
and
\bna
I_p(s,W_{p},\Phi_p)&=&
L_p(2s,\mathbf{1}_{\Q_p})
\int_{\Q_p^\times}W_p\bma a&\\&1\ema\overline{W_{p}\bma a&\\&1\ema}|a|_p^{s-1}
d^\times a\\
&=&
L_p(2s,\mathbf{1}_{\Q_p})\sum_{m\geq 0}p^{-ms}
\left(\sum_{l_1+l_2=m}\alpha_{p,1}^{l_1}\alpha_{p,2}^{l_2}\right)
\left(\sum_{l_3+l_4=m}\overline{\alpha_{p,1}}^{l_3} \overline{\alpha_{p,2}}^{l_4}\right)\\
&=&L_p(s,\mathbf{1}_{\Q_p})L_p(s,\pi_p,\mathrm{sym^2}).
\ena
\subsubsection{The Archimedean place}
For $k_\infty\in K_\infty$ we have
\bna
\xi_\infty(s,k_\infty.\Phi_\infty)=
2\int_{0}^\infty
z^{2s}e^{-\pi z^2}\frac{dz}{z}
=\pi^{-s}\Gamma(s).
\ena
Hence
\bna
\begin{aligned}
I_\infty(s,W_{2k},\Phi_\infty)
&=\pi^{-s}\Gamma(s)\int_{0}^\infty
W_{2k}\bma a&\\&1\ema
\overline{W_{2k}\bma a&\\&1\ema}a^{s-1}\frac{da}{a}\\
&=\pi^{-s}\Gamma(s)\int_{0}^\infty
4a^{2k+s-1}e^{-4\pi a}\frac{da}{a}\\
&=4^{2-2k-s}\pi^{-(2s+2k-1)}\Gamma(s)\Gamma(s+2k-1),
\end{aligned}
\ena
and
\bna
\begin{aligned}
I_\infty(s,W_{\epsilon_{\pi},0},\Phi_\infty)
&=\pi^{-s}\Gamma(s)\int_{\R^\times}
W_{\epsilon_\pi,0}\bma a&\\&1\ema
\overline{W_{\epsilon_\pi,0}\bma a&\\&1\ema}|a|_\infty^{s-1}\frac{da}{|a|_\infty}\\
&=2^{3-s}\pi^{-2s}\Gamma(s)\int_{0}^\infty
K_{it_{\pi}}(a)\overline{K_{it_{\pi}}(a)}a^{s}\frac{da}{a}\\
&=\pi^{-2s}\Gamma\left(\frac{s}{2}-it_\pi\right)
\Gamma\left(\frac{s}{2}+it_\pi\right)
\Gamma\left(\frac{s}{2}\right)^2.
\end{aligned}
\ena
In the last step, we have used the formula in \cite[page 212]{Go},
\bea
\int_{0}^\infty K_{\mu}(y)K_{\nu}(y)y^s\frac{dy}{y}
=2^{s-3}\frac{\Gamma\left(\frac{s-\mu-\nu}{2}\right)
\Gamma\left(\frac{s-\mu+\nu}{2}\right)
\Gamma\left(\frac{s+\mu-\nu}{2}\right)
\Gamma\left(\frac{s+\mu+\nu}{2}\right)
}{\Gamma(s)}.\label{temp-produc-of-Kbessel-intgral}
\eea

By the above argument, the global Rankin-Selberg integral is
\bna
I(s,\phi_\pi,\Phi)
=
   \zeta(s) L_{\mathrm{fin}}(s,\pi,\mathrm{sym^2})
\prod_{p\mid N}\frac{1-p^{-s}}{p^2(p+1)}\left\{4^{-(s+2k-2)}\pi^{-(2s+2k-1)}
    \Gamma(s)\Gamma(s+2k-1)\right\}
\ena
if $\pi\in\mathcal A(2k,N^3)$, and
\bna
    I(s,\phi_\pi,\Phi)
    =\zeta(s)
    L_{\mathrm{fin}}(s,\pi,\mathrm{sym^2})\prod_{p \mid N}\frac{1-p^{-s}}{p^2(p+1)}
\left\{\pi^{-2s}\Gamma\left(\frac{s}{2}-it_\pi\right)
\Gamma\left(\frac{s}{2}+it_\pi\right)
\Gamma\left(\frac{s}{2}\right)^2\right\}
\ena
if $\pi\in\mathcal A(0,N^3)$. Thus we have
    \bea
    &&\res_{s=1}I(s,\phi_\pi,\Phi)\nonumber\\
    &=&L_{\mathrm{fin}}(
    1,\pi,\mathrm{sym^2})\prod_{p\mid N}\frac{p-1}{p^3(p+1)}
    \left\{
    \begin{aligned}
&4^{-(2k-1)}\pi^{-(2k+1)}
\Gamma(2k),\quad &&\textrm{if\ }\pi\in\mathcal A(2k,N^3),\\
&\frac{1}{\cosh(\pi t_{\pi})},\quad &&\textrm{if\ }\pi\in\mathcal A(0,N^3).
\end{aligned}
\right.
    \label{appendix-residues-another-expression}
    \eea
Proposition \ref{prop-inner-product} follows immediately from
formulas (\ref{appendix-residues-1}), (\ref{constant-term-of-Fourier-tras}) and (\ref{appendix-residues-another-expression}).


\begin{thebibliography}{100}

%\bibitem[Ap2013]{Ap2013}
%T.M. Apostol, {\it Introduction to analytic number theory},
%Undergraduate Texts in Mathematics. Springer-Verlag, New York-Heidelberg, 1976.

\bibitem[BaFr2016]{BF1} O. Balkanova and D. Frolenkov, {\it A uniform asymptotic formula for the second moment of primitive $L$-functions on the critical line},
    Proc. Steklov Inst. Math. 294.1 (2016): 13-46.

\bibitem[BaFr2018]{BF} O. Balkanova and D. Frolenkov, {\it Non-vanishing of automorphic L-functions of prime power level},
    Monatsh. Math. 185.1 (2018): 17-41.


\bibitem[Bu1998]{B}
D. Bump, {\it Automorphic forms and representations}, Cambridge Studies in Advanced Mathematics, 55. Cambridge University Press, Cambridge, 1997.

\bibitem[BuHe2006]{BuHe2006}
C.J. Bushnell and G. Henniart,
{\it The local Langlands conjecture for $GL(2)$},
Grundlehren der Mathematischen Wissenschaften, 335. Springer-Verlag, Berlin, 2006.

%\bibitem[By1996]{By} V. Bykovskii, {\it A trace formula for the scalar product of Hecke series and its applications},  J. Math. Sci. 89.1 (1998): 915-932.

\bibitem[ByFr2017]{BF2}V. Bykovskii and D. Frolenkov, {\it Asymptotic formulas for the second moments of
$L$-series associated to holomorphic cusp forms on the critical line}, Izvestiya: Mathematics 81.2 (2017): 239-268.

%\bibitem[Ca1973]{Ca1973}
%W. Casselman, {\it On some results of Atkin and Lehner}, Math. Ann. 201.4(1973):301-314.


\bibitem[Co2004]{Co2004}
J.W. Cogdell, {\it Analytic Theory of $L$-Functions for $GL_n$},
An Introduction to the Langlands Program. Birkh\"auser, Boston, MA, 2003. 197-228.


%\bibitem[DeIw1982]{DeIw1982}
%J.M. Deshouillers and H. Iwaniec, {\it Kloosterman sums and Fourier coefficients of cusp forms}, Invent. Math. 70.2 (1982): 219-288.


\bibitem[Du1995]{Du1995}
W. Duke, {\it The critical order of vanishing of automorphic L-functions with large level}, Invent. Math. 119.1 (1995): 165-174.


\bibitem[DuGu1975]{DuGu1975}
J.J. Duistermaat and V.M. Guillemin,
{\it The Spectrum of Positive Elliptic Operators and Periodic Bicharacteristics}. Invent. Math. 29.1 (1975): 39-79.

\bibitem[FeWh2009]{FW}
B. Feigon and D. Whitehouse,
{\it Averages of central $L$-values of Hilbert modular forms with an application to subconvexity},
Duke Math. J. 149.2 (2009): 347-410.


\bibitem[GaHoSe2009]{GaHoSe2009}
S. Ganguly, J. Hoffstein and J. Sengupta, {\it Determining modular forms on $SL_2(\mathbb Z)$ by central values of convolution L-functions},
Math. Ann. 345.4 (2009): 843-857.


\bibitem[Ge1996]{Ge}
S. Gelbart, {\it Lectures on the Arthur-Selberg trace formula},
University Lecture Series, 9. American Mathematical Society, Providence, RI, 1996.

\bibitem[Go2006]{Go}
D. Goldfeld,
{\it Automorphic Forms and L-functions for the Group
$GL(n,\R)$}, Cambridge Studies in Advanced Mathematics, 99. Cambridge University Press, Cambridge, 2006.


\bibitem[Iw2002]{Iw2002-spectral-method}
H. Iwaniec,
{\it Spectral methods of automorphic forms},
Graduate Studies in Mathematics, 53. American Mathematical Society, Providence, RI; Revista Matem$\acute{a}$tica Iberoamericana, Madrid, 2002.


\bibitem[IwLuSa2000]{IwLuoSarnak-lowlying-zeroes}
H. Iwaniec, W. Luo and P. Sarnak,
{\it Low lying zeros of families of $L$-functions},
Publ. Math. Inst. Hautes \'{E}tudes Sci. 91.1 (2000): 55-131.


\bibitem[IwSa2000]{IwSa2000}
H. Iwaniec and P. Sarnak, {\it The non-vanishing of central values of automorphicL-functions and Landau-Siegel zeros}, Israel J. Math. 120.1 (2000): 155-177.


\bibitem[Ja1972]{Ja}
H. Jacquet, {\it Automorphic Forms on GL(2) Part II}, Lecture Notes in Mathematics, Vol. 278. Springer-Verlag, Berlin-New York, 1972.

\bibitem[JaYe1996]{JaYe}
H. Jacquet and Y. Ye, {\it Distinguished representations and quadratic base change for $GL(3)$},
Trans. Amer. Math. Soc.348.3 (1996): 913-939.

\bibitem[JaKn2015]{JaKn}
J. Jackson, and A. Knightly,
{\it Averages of twisted $L$-functions},
J. Aust. Math. Soc. 99.2 (2015): 207-236.

\bibitem[Kh2010]{Khan2010}
R.R. Khan, {\it Non-vanishing of the symmetric square $L$-function},
Proc. Lond. Math. Soc. 100.3 (2010): 736-762.


\bibitem[KnLi2006a]{KnLi2010a}
A. Knightly and C. Li,
{\it A relative trace formula proof of the Petersson
trace formula}, Acta Arith. 122.3 (2006): 297-313.

\bibitem[KnLi2006b]{KnLi2006}
A. Knightly and C. Li. {\it Traces of Hecke Operators},
 Mathematical Surveys and Monographs, 133. American Mathematical Society, Providence, RI, 2006.


\bibitem[KnLi2010]{KnLi2010b}
A. Knightly and C. Li,
{\it Weighted averages of modular $L$-values},
Trans. Amer. Math. Soc. 362.3 (2010): 1423-1443.


\bibitem[KnLi2012]{KnLi2012}
A. Knightly and C. Li, {\it Modular L-values of cubic level},
Pacific J. Math. 260.2 (2012): 527-563.

\bibitem[KnLi2013]{KnLi2013}
A. Knightly and C. Li,
{\it Kuznetsov's Trace Formula and the Hecke Eigenvalues of Maass Forms},
Mem. Amer. Math. Soc. 224 (2013), no. 1055.



\bibitem[KnLi2015]{KnLi2015}
A. Knightly and C. Li, {\it
Simple supercuspidal representations of ${\rm GL}(n)$},
Taiwanese J. Math. 19.4 (2015): 995-1029.

\bibitem[Ko1998]{Ko1998}
E. Kowalski, {\it The rank of the jacobian of modular curves: analytic methods},
Ph.D. thesis, Rutgers University, May 1998, http://www.math.ethz.ch.

\bibitem[KoMi1999]{KoMi1999}
E. Kowalski and P. Michel, {\it The analytic rank of $J_0(q)$ and zeros of automorphic L-functions}, Duke Math. J. 100.3 (1999): 503-542.


\bibitem[Li2011]{Li2011}
X. Li, {\it A weighted Weyl law for the modular surface},
Int. J. Number Theory 7.1 (2011): 241-248.


\bibitem[Luo2015]{Luo2015}
W. Luo, {\it Nonvanishing of the central $L$-values with large weight},
Adv. Math. 285 (2015): 220-234.

\bibitem[Nel2017]{Nel2017} P. Nelson,
{\it Analytic isolation of newforms of given level}, Arch. Math. 108 (2017): 555-568.


\bibitem[Popa]{Popa}
A.A. Popa, {\it
 Whittaker newforms for local representations of $GL(2)$}, preprint.

\bibitem[RaRo2005]{RR}
D. Ramakrishnan and J.D. Rogawski, {\it Average values of
modular $L$-series via the relative trace formula},
Pure Appl. Math. Q. 1.4 (2005): 701-735.


\bibitem[Ro1983]{Ro1}
J.D. Rogawski, {\it Representations of $GL(n)$ and division algebras
over a $p$-adic field}, Duke Math. J 50.1 (1983): 161-196.



\bibitem[Rou2011]{Rouymi2011}
D. Rouymi, {\it Formules de trace et non-annulation de fonctions $L$ automorphes au niveau $p^\nu$}, Acta Arith. 147.1 (2011): 1-32.

\bibitem[Rou2012]{Rou} D. Rouymi, {\it Mollification et non annulation de fonctions $L$ automorphes en niveau primaire},
    J. Number Theory 132.1 (2012): 79-93.


\bibitem[Roy2001]{Roy} E. Royer, {\it Sur les fonctions L de formes modulaires}, Ph.D. thesis, Universit\'{e} de Paris-Sud (2001).

\bibitem[Su2015]{Su}
S. Sugiyama,
{\it Asymptotic behaviors of means of central values of automorphic $L$-functions for $GL(2)$},
J. Number Theory 156 (2015): 195-246.

\bibitem[SuTs2016]{ST}
S. Sugiyama and M. Tsuzuki
{\it Relative trace formulas and subconvexity estimates of $L$-functions for Hilbert modular forms},
Acta Arith. 176.1 (2016): 1-63.

\bibitem[Ye1989]{Ye}
Y. Ye, {\it Kloosterman integrals and base change for $GL(2)$},
J. Reine Angew. Math. 400.57 (1989): 57-121.

\bibitem[Zh2004]{Zhang}
S.W. Zhang, {\it Gross-Zagier Formula for $GL(2)$, II},
 Heegner points and Rankin L-series, 191-214,
Math. Sci. Res. Inst. Publ., 49, Cambridge Univ. Press, Cambridge, 2004.



\end{thebibliography}
\end{document}